\documentclass[3p]{elsarticle}

\usepackage{amsfonts, amsthm, amsmath, amssymb}
\usepackage{bm} % for bold print of greek letters
\usepackage{dsfont} % for characteristic function in \mathds{1}
\usepackage{tikz-cd} %insert after xcolor
\usepackage[colorlinks]{hyperref} % no frame when referring
\hypersetup{breaklinks=true}
\usepackage{appendix} % else Proposition Appendix A.1.
\numberwithin{equation}{section} % Numbering in Equations in Section 4 as (4.1), (4.2), ... instead of (21), (22), ...

%\usepackage[notref,notcite]{showkeys} % to be deleted before submission

%%%%%%%%%%%%%%%%%%%%%%%%%%%%%%%%%%%%%%%%%%%%%%%%%%%%%%%%%%%%%%%

\newtheorem{theorem}{Theorem}[section]
\newtheorem{assumption}{Assumption}
\newtheorem{corollary}[theorem]{Corollary}
\newtheorem{definition}[theorem]{Definition}

\newtheorem{proposition}[theorem]{Proposition}
\newtheorem{lemma}[theorem]{Lemma}
\theoremstyle{remark} % remarks non-italic
\newtheorem{remark}[theorem]{Remark}

% define primed Assumption which works with hyperref/links
\makeatletter
\newcommand{\neutralize}[1]{\expandafter\let\csname c@#1\endcsname\count@}
\makeatother

\newenvironment{assprime}[1]
  {%
   \neutralize{assumption}\phantomsection
   \begin{assumption}}
  {\end{assumption}}

% Name of the Journal
\journal{Journal de Math\'{e}matiques Pures et Appliqu\'{e}es}

\begin{document}

% titlepage
\begin{frontmatter}

 \title{Long-time shadow limit for reaction-diffusion-ODE systems} %\tnoteref{t1,t2}}
 
%\tnotetext[t1]{In memoriam of Andro Mikeli\'{c}}
%  \tnotetext[t1]{This document is the results of the research project funded by the National Science Foundation.} 
%\tnotetext[t2]{The second title footnote which is a longer text matter to fill through the whole text width and overflow into another line in the footnotes area of the first page.}

 \author[1]{Chris Kowall}% \fnref{fn1}}
 \ead{kowall@math.uni-heidelberg.de}
\author[2]{Anna Marciniak-Czochra\corref{cor1}}%\fnref{fn2}}
\ead{anna.marciniak@iwr.uni-heidelberg.de} 
\author[3]{Andro Mikeli\'{c}\fnref{fn3}} 
%\ead{andro.mikelic@univ-lyon1.fr}

\cortext[cor1]{Corresponding author}

%\fntext[fn1]{Footnote of first author} 
%\fntext[fn2]{This work is supported by the Deutsche Forschungsgemeinschaft (DFG, German Research Foundation) under Collaborative Research Center 1324 (SFB1324, project B6).}%Another author footnote, this is a very long footnote and it should be a really long footnote. But this footnote is not yet sufficiently long enough to make two lines of footnote text
\fntext[fn3]{Dedicated to the Memory of Professor Andro Mikeli\'{c}, deceased on November 28, 2020.}

\address[1]{Institute of Applied Mathematics and IWR, Heidelberg University,
Im Neuenheimer Feld 205, 69120 Heidelberg, Germany}
\address[2]{Institute of Applied Mathematics, IWR and BIOQUANT, Heidelberg University, Im Neuenheimer Feld 205, 69120 Heidelberg, Germany}
\address[3]{Univ Lyon, Universit\'{e} Claude Bernard Lyon 1, CNRS UMR 5208, Institut Camille Jordan, 43 blvd. du 11 novembre 1918, F-69622 Villeurbanne cedex, France} 
 
 \begin{abstract}
Shadow systems are an approximation of reaction-diffusion-type problems obtained in the infinite diffusion coefficient limit. They allow reducing complexity of the system and hence facilitate its analysis.  The quality of approximation can be considered in three time regimes: (i) short-time intervals taking account for the initial time layer, (ii) long-time intervals scaling with the diffusion coefficient and tending to infinity for diffusion tending to infinity, and (iii) asymptotic state for times up to $T = \infty$.  In this paper we focus on uniform error estimates in the long-time case. Using linearization at a time-dependent shadow solution, we derive sufficient conditions for control of the errors. The employed methods are cut-off techniques and $L^p$-estimates combined with stability conditions for the linearized shadow system.  Additionally, we show that the global-in-time extension of the uniform error estimates may fail without stronger assumptions on the model linearization. The approach is presented on example of reaction-diffusion equations coupled to ordinary differential equations (ODEs), including classical reaction-diffusion system. The results are illustrated by examples showing necessity and applicability of the established conditions.
\end{abstract}

% keywords (maximal 6)
\begin{keyword}
Shadow limit \sep reaction-diffusion equation \sep model reduction \sep uniform long-time approximation  
\end{keyword}

\end{frontmatter}

%%
%% Start line numbering here if you want
%\linenumbers

%\tableofcontents %to be deleted before submission

\section{Introduction} 

Reaction-diffusion systems  with zero flux boundary conditions and  a large diffusion coefficient can be studied by means of their model reduction as diffusion tends to infinity. %Such a limit is applied in various areas of applied mathematics including 
The infinite diffusion limit is used in various areas of applied mathematics such as recently in control theory \cite{Hernandez, Zuazua} and optimization \cite{Mazari}. The resulting model reduction, called \emph{shadow limit}, is classically used to investigate existence and stability of stationary solutions of reaction-diffusion systems with a large ratio of diffusion rates \cite{He3, Miy05, Takagi, Wei}. Furthermore, a shadow limit reduction facilitates the analysis of the spatio-temporal evolution of solutions of reaction-diffusion systems \cite{Hale89, Kondo}. Originally considered for a system of two coupled reaction-diffusion equations, starting from the works \cite{Hale89, Keener78, Nishiura, Takagi}, the shadow limit has attracted a considerable interest also in the case of reaction-diffusion-ODE systems \cite{Bobrowski2, Hernandez, KMCMlinear, dynspike, AMCAM:2015A}. Many ecological or biological applications lead to models coupling reaction-diffusion equations with ordinary differential equations, e.g., \cite{Hock, Klika, Marasco, MCK08}. The non-diffusing components may lead to a lower spatial regularity of model solutions and even loss of stability of regular stationary solutions \cite{Steffen, KMCT20}.\\

In this paper, we focus on a rigorous proof of a large diffusion limit for reaction-diffusion-ODE models, pursuing the work \cite{KMCMlinear} and extending the analysis to nonlinear models {on long-time intervals}. This work includes classical reaction-diffusion systems as a special case of reaction-diffusion-ODE systems. Our results provide understanding of the long-term dynamics of the reaction-diffusion-ODE system from results obtained for its shadow limit.\\

%Classical shadow systems are considered in \cite{Kondo} as well as recently in \cite{Zuazua} in the context of control theory and also in the context of optimization \cite{Mazari}. The infinite diffusion limit also applies in the context of control theory \cite{Zuazua} and optimization \cite{Mazari}.  

%We consider reaction-diffusion-ODE systems, including classical reaction-diffusion models. 
We assume that $\Omega \subset \mathbb{R}^n$ is a bounded domain with Lipschitz boundary $\partial \Omega \in C^{0,1}$ and consider the following quasilinear system (vector-valued quantities and operators are highlighted in bold print)
\begin{align} \label{fullsys}
\begin{split} 
     \frac{\partial \mathbf{u}_\varepsilon}{\partial t} - \mathbf{D} \Delta  \mathbf{u}_{\varepsilon} &= \mathbf{f}( \mathbf{u}_{\varepsilon} , v_{\varepsilon} , x,t) \quad \text{in} \quad \Omega_T:=\Omega \times (0,T), \qquad   \mathbf{u}_{\varepsilon} (\cdot, 0) =\mathbf{u}^0  \quad \text{in} \quad \Omega, \\
        \frac{\partial v_{\varepsilon}}{\partial t} - \frac{1}{\varepsilon} \Delta v_\varepsilon &=   g ( \mathbf{u}_\varepsilon , v_\varepsilon , x,t)  \quad \text{in} \quad \Omega_T, \hspace{2.8cm} \, v_\varepsilon (\cdot, 0) =v^0 \quad \text{in} \quad \Omega, \\
 \frac{\partial \mathbf{u}_{\varepsilon}}{\partial \nu} & =\mathbf{0},  \quad \frac{\partial v_\varepsilon}{\partial \nu} =0 \quad \text{on} \quad \partial \Omega \times (0,T),
 \end{split}
\end{align}
where $\mathbf{D} \in \mathbb{R}_{\ge 0}^{m \times m}$ is a diagonal matrix with non-negative entries $D_i \ge 0$ for $i=1, \dots, m$ and $\varepsilon>0$ is a small parameter. This problem consists of a vector-valued function $\mathbf{u}_\varepsilon$, possibly with non-diffusing components for which $D_i=0$, and a scalar diffusing component $v_\varepsilon$ with a large diffusion coefficient $1/\varepsilon$. With a slight abuse of notation, the boundary conditions mean that each diffusing component of system \eqref{fullsys} satisfies zero-flux boundary conditions. \\ 

The shadow limit reduction of system \eqref{fullsys} yields the following system of integro-differential equations
\begin{align}\label{shadow}
\begin{split}
\frac{\partial \mathbf{u}}{\partial t}- \mathbf{D} \Delta  \mathbf{u} &= \mathbf{f}( \mathbf{u} , v, x,t ) \qquad \qquad \quad \mbox{in} \quad \Omega_T, \quad  \;  \mathbf{u} (\cdot, 0) =\mathbf{u}^0  \quad \text{in} \quad \Omega, \quad \; \frac{\partial  \mathbf{u}}{\partial \nu}=0 \quad \text{on} \quad \partial \Omega \times (0,T),\\
   \frac{\mathrm{d} v}{\mathrm{d} t} &=\langle g ( \mathbf{u} (\cdot, t) , v (t) , \cdot,t) \rangle_\Omega \quad \mbox{in} \quad (0,T), \quad v(0) = \langle v^0 \rangle_\Omega,
   \end{split}
\end{align}
where we abbreviate the spatial mean value of a function $z \in L^1(\Omega)$ by
\[
\langle z \rangle_\Omega = \frac{1}{|\Omega|} \int_\Omega z(x) \; \mathrm{d}x.
\]
For reaction-diffusion-ODE models, the shadow limit has been studied in \cite{Bobrowski2} and independently in \cite{KMCMlinear, AMCAM:2015A}, showing an error estimate in terms of the large diffusion parameter $1/\varepsilon$. In both approaches, the obtained estimates depend exponentially on the length of the time interval, see \cite{KMCMlinear} for details.  This means that for long time intervals the error estimate deteriorates significantly and no conclusion on the long-time behavior of solutions to system \eqref{fullsys} can be drawn from the behavior of its shadow limit reduction \eqref{shadow}. \\

The current paper is devoted to analysis of the uniform shadow-limit approximation  for long-time intervals, i.e., for time intervals $(0,T)$ of the length $T\sim \varepsilon^{-\ell}$ for some $\ell>0$. Additionally, we study the approximation on the global time scale for times up to $T= \infty$. We provide error estimates in terms of powers of the inverse $\varepsilon$ of the large diffusion parameter, which are uniform with respect to space and time. Departing from the analysis of a linear reaction-diffusion-ODE system with space-independent coefficients, considered in \cite{KMCMlinear}, we approach the nonlinear problem using linearization of the system of errors at the shadow solution. This step requires extending the analysis of \cite{KMCMlinear} to a system with space-dependent coefficients and formulating sufficient stability conditions on the linearization. Such an extension allows considering heterogeneous environments as well as explicit time-dependence of the nonlinearities in system \eqref{fullsys} and \eqref{shadow}, such as \cite{He3, Peng, Wang}. Moreover, we may include the case of classical reaction-diffusion systems
for which, to the best knowledge of the authors, a similar quantitative convergence result on long-time scales is not known.\\

The paper is organized as follows. After a brief description of the used approach and assumptions in Section \ref{sec:main}, the main results concerning uniform error estimates on long-time intervals are given in Theorem \ref{TheoremToy} and Theorem \ref{Theorem}. We introduce the notion of a mild solution for problem \eqref{fullsys} and \eqref{shadow} and provide preliminary results in Section \ref{sec:preliminary}. A detailed proof of the main results is given in Section \ref{sec:proofs}. Section \ref{sec:stabverification} is devoted to possible analytical ways of verifying the used stability conditions, including the case of a stationary shadow limit and a dissipativity condition. In Section \ref{sec:modelex}, a linear model and a predator-prey system exemplify the results. Essential properties concerning solutions of the heat equation are deferred to the Appendix.

%%%%%%%%%%%%%%%%%%%%%%%%%%%%%%%%%%%%%%%%%%%%%%%%%%%%%%%%%%%%%%%%%%%%%%%%%%%%%%%%%%%%%%%%%%%%%%%%

\section{Main results} \label{sec:main}

In this work, solutions of the reaction-diffusion-ODE system \eqref{fullsys} are compared to the corresponding solution of the shadow limit \eqref{shadow} with respect to the norm $L^\infty(\Omega_T)$ where $\Omega_T= \Omega \times (0,T)$. The choice of $L^\infty(\Omega_T)$ is suitable for bounded, discontinuous solutions of a wide range of nonlinear problems which can be solved using the method of Rothe \cite{Rothe}. Such a choice is motivated by spatial discontinuities of solutions of reaction-diffusion-ODE systems \cite{Steffen, KMCT20}. We consider {mild solutions given by an implicit integral equation} of the reaction-diffusion-type system and its shadow limit. At the same time, boundary regularity of the spatial domain $\Omega$ is relaxed to Lipschitz regularity which is necessary for %validity of Sobolev embeddings, existence of the heat semigroup and 
a proper notion of the boundary condition. The semigroup framework used in this work allows applying the uniform shadow limit approximation to space- and time-dependent reaction-diffusion-type problems of the form \eqref{fullsys}. \\ %heterogeneous environments 
The presented analysis involves several technical challenges. These partly arise from low regularity of model parameters, initial data or the considered domain. Furthermore, the spatial mean values in the integro-differential system \eqref{shadow} imply that the shadow system is a singular limit of the diffusing system \eqref{fullsys}. Hence, in the error estimates, we involve a {correction term} $\psi_{\varepsilon}$. This term includes the initial time layer which originates from different initial values $v^0$ and $\langle v^0 \rangle_\Omega$. Moreover, there is a discrepancy between nonlinearities of both systems {for which the correction term $\psi_{\varepsilon}$ takes account (English correct?) to streamline the analysis (for an easy linearization of $g(u_\varepsilon) - g(u)$ instead of $g(u_\varepsilon)- \langle g (u) \rangle_\Omega$)}. The precise definition and properties of the mean value correction $\psi_\varepsilon$ can be found in Section \ref{sec:meancorrect}. 

\subsection*{Main assumptions and approach}

To describe the main assumptions of the following results, we provide an overview of the used approach. When comparing solutions $(\mathbf{u}_{\varepsilon}, v_\varepsilon)$ of the reaction-diffusion-ODE system \eqref{fullsys} with the solution $(\mathbf{u}, v)$ of the shadow system \eqref{shadow}, estimates for the error $\mathbf{U}_\varepsilon = \mathbf{u}_{\varepsilon} - \mathbf{u}$ and $V_\varepsilon = {v}_{\varepsilon}- {v}- \psi_{\varepsilon}$ are established with respect to the $L^\infty(\Omega_T)$ norm. Similar to \cite[Section 3]{AMCAM:2015A}, the system 
\begin{align} \label{errorsys}
\begin{split}
\frac{\partial \mathbf{U}_\varepsilon }{\partial t} - \mathbf{D} \Delta  \mathbf{U}_{\varepsilon}  & = \mathbf{f} (\mathbf{u}_\varepsilon,  v_\varepsilon, x, t)- \mathbf{f} (\mathbf{u}, v, x, t) \quad  \mbox{in} \quad  \Omega_T, \qquad \mathbf{U}_\varepsilon (\cdot, 0)  = \mathbf{0} \quad  \mbox{in} \quad  \Omega,  \\
\frac{\partial V_\varepsilon}{\partial t} -\frac{1}{\varepsilon} \Delta V_\varepsilon & = g(\mathbf{u}_\varepsilon, v_\varepsilon, x,t)-g(\mathbf{u},v,x,t) \quad \mbox{in} \quad  \Omega_T, \qquad  \; V_\varepsilon (\cdot,0) = 0 \quad  \mbox{in} \quad  \Omega,\\
 \frac{\partial \mathbf{U}_{\varepsilon}}{\partial \nu} & =\mathbf{0},  \quad \frac{\partial V_\varepsilon }{\partial \nu}  = 0 \; \; \mbox{on} \; \;  \partial \Omega \times (0,T)  
  \end{split}
\end{align}
of errors is linearized at a given shadow limit $(\mathbf{u},v)$, hence, we assume sufficient regularity.

\begin{assumption}[Nonlinearity]
 \label{AssumptionN}  Let $\mathbf{f}, g$ be continuously differentiable with respect to $(\mathbf{u},v) \in \mathbb{R}^{m+1}$. For every fixed $(\mathbf{u},v) \in \mathbb{R}^{m+1}$, $\mathbf{h} = (\mathbf{f}, g)$ and its derivatives $\nabla_u f_i, \nabla_v f_i, i=1, \dots, m,$ and $\nabla_u g, \partial_v g$ are measurable functions in the variables $(x,t) \in {\overline \Omega} \times \mathbb{R}_{\ge 0}$. Furthermore, $\mathbf{h}$ and all derivatives $\nabla_u f_i, \nabla_v f_i$ and $\nabla_u g, \partial_v g$ satisfy the following local Lipschitz condition: For every bounded set $B \subset \mathbb{R}^{m+1} \times \overline{\Omega} \times \mathbb{R}_{\ge 0}$ there exists a constant $L(B)>0$ such that for all $(\mathbf{u},v,x,t), (\mathbf{y}, z,x,t) \in B$
   \begin{align} \label{Loclip}
   \begin{split}
   |\mathbf{h}(\mathbf{u},v, x,t)| & \le L(B),\\
    |\mathbf{h}(\mathbf{u}, v, x,t)-\mathbf{h}(\mathbf{y}, z, x,t)| & \le L(B)\left( |\mathbf{u}- \mathbf{y}| + |v -z| \right). 
        \end{split}
   \end{align}
\end{assumption}

\noindent For instance, such a local Lipschitz regularity is satisfied for an autonomous, i.e., space- and time-independent, right-hand side $(\mathbf{f}, g)$ which is of class $C^2$ with respect to the unknown variables $(\mathbf{u}, v)$. \\
Furthermore, as we are interested in long-time behavior of the model solutions, we assume a globally defined, uniformly bounded shadow solution $(\mathbf{u},v)$ with the following properties. 

 \begin{assumption}[Bounded shadow limit]
\label{AssumptionB} For bounded initial datum $(\mathbf{u}^0,  v^0) \in L^\infty (\Omega)^{m+1}$, the solution $(\mathbf{u},v)$ of the shadow limit \eqref{shadow} is globally defined and uniformly bounded, i.e., $\mathbf{u} \in L^\infty(\Omega \times \mathbb{R}_{\ge 0})^m$ and $v \in L^\infty(\mathbb{R}_{\ge 0})$. Furthermore, let $g$ and the linearized parts $\nabla_{(\mathbf{u},v)} \mathbf{f}$, $\nabla_{(\mathbf{u},v)} g$, evaluated at the shadow solution, be uniformly bounded in $(x,t) \in \overline{\Omega} \times \mathbb{R}_{\ge 0}$. 
 \end{assumption}
 
\noindent We refer to Section \ref{sec:shadow} for well-posedness of the shadow problem and for a discussion of Assumption \ref{AssumptionB}. Note that Assumption \ref{AssumptionB} excludes a consideration of a shadow limit which is unbounded in time or even blows up in finite time such as in \cite{dynspike}. Moreover, Assumptions \ref{AssumptionN}--\ref{AssumptionB} imply that the correction term $\psi_{\varepsilon}$ is negligible for larger times, see Lemma \ref{decayestmean}.  \\

In order to estimate solutions $\mathbf{U}_\varepsilon = \mathbf{u}_{\varepsilon} - \mathbf{u}$ and $V_\varepsilon = {v}_{\varepsilon}- {v}- \psi_{\varepsilon}$ of the nonlinear system \eqref{errorsys}, it is necessary to control the growth of the nonlinear remainder. For this we apply a cut-off technique similar to \cite{AMCAM:2015A}. This consists of restricting the remaining nonlinear part to a bounded region to ensure that the linear part determines the evolution of the whole nonlinear system \eqref{errorsys}. Denoting the truncated nonlinear remainders by $\mathbf{F}_\varepsilon, G_\varepsilon$, this procedure leads us to a truncated problem of the form
\begin{align} \label{truncsys}
\begin{split}
\frac{\partial \bm{\alpha}_{\varepsilon} }{\partial t} - \mathbf{D} \Delta  \bm{\alpha}_{\varepsilon}  & =\nabla_\mathbf{u} \mathbf{f} \cdot \bm{\alpha}_{\varepsilon} +
 \nabla_v \mathbf{f} \cdot (\beta_\varepsilon +  \psi_\varepsilon )  + \mathbf{F}_\varepsilon (\bm{\alpha}_{\varepsilon} , \beta_\varepsilon,x,t) \quad \mbox{in} \quad \Omega_T, \qquad  \bm{\alpha}_{\varepsilon} (\cdot, 0)   = \mathbf{0} \quad \text{in} \quad \Omega, \\
\frac{\partial \beta_\varepsilon}{\partial t} - \frac{1}{\varepsilon} \Delta \beta_\varepsilon & =  \nabla_\mathbf{u} g \cdot  \bm{\alpha}_{\varepsilon} +
 \nabla_v g  \cdot (\beta_\varepsilon+ \psi_\varepsilon) + G_\varepsilon (\bm{\alpha}_{\varepsilon}, \beta_\varepsilon, x,t)  \quad  \mbox{in} \quad  \Omega_T,   \qquad \, \beta_\varepsilon (\cdot,0) = 0 \quad \text{in} \quad \Omega, \\
  \frac{\partial \bm{\alpha}_{\varepsilon}}{\partial \nu}  & =\mathbf{0},  \quad  \frac{\partial \beta_\varepsilon }{\partial \nu} =0 \quad \mbox{on} \quad \partial \Omega \times (0,T).    
 \end{split}
 \end{align}
Herein, we abbreviated the Jacobians $\nabla_\mathbf{u} \mathbf{f}, \nabla_v \mathbf{f}, \nabla_\mathbf{u} g, \nabla_v g$ which are evaluated at the shadow solution $(\mathbf{u}, v)$ and depend in general on space and time. By Assumption \ref{AssumptionB}, these quantities are uniformly bounded. Basic properties and estimates of the solution $(\bm{\alpha}_{\varepsilon}, \beta_\varepsilon)$ and the precise choice of the truncation in $(\mathbf{F}_\varepsilon, G_\varepsilon)$ are given in Section \ref{sec:truncation}. Moreover, the following strategy of the proof is depicted in Figure \ref{figstrategy}. Due to exponential decay of the heat semigroup for functions with zero mean, the component $\beta_\varepsilon- \langle \beta_\varepsilon \rangle_\Omega$ can be estimated in terms of $\varepsilon$ and the remaining functions $\bm{\alpha}_\varepsilon$ and $\langle \beta_\varepsilon \rangle_\Omega$.  To control the truncated error $\bm{\alpha}_\varepsilon$ and the spatial mean $\langle \beta_\varepsilon \rangle_\Omega$, we focus on the linearization of the corresponding nonlinear shadow problem for $(\bm{\alpha}_\varepsilon, \langle \beta_\varepsilon \rangle_\Omega)$. Due to truncation, the nonlinear remainder is small in terms of $\varepsilon$ and the behavior of the linear system determines the behavior of the full nonlinear system. Hence, we formulate a stability condition on the linear shadow system
\begin{align} \label{shadlinear}
\begin{split}
\frac{\partial \bm{\xi}_1}{\partial t} - \mathbf{D} \Delta \bm{\xi}_1 &= \nabla_\mathbf{u} \mathbf{f} \cdot  \bm{\xi}_1 +\nabla_v \mathbf{f} \cdot  \xi_2 \qquad \qquad \text{in} \quad \Omega \times \mathbb{R}_{>0}, \qquad  \bm{\xi}_1 (\cdot, 0)  =\bm{\xi}_1^0  \quad \text{in} \quad \Omega,\\
   \frac{\mathrm{d} \xi_2}{\mathrm{d} t} &= \langle \nabla_\mathbf{u} g \cdot  \bm{\xi}_1\rangle_\Omega + \langle \nabla_v g \cdot  \xi_2 \rangle_\Omega  \quad \, \mbox{in} \quad \mathbb{R}_{>0}, \qquad \qquad \; \; \, \xi_2 (0) =  \xi_2^0
   \end{split}
\end{align}
endowed with zero Neumann boundary conditions for diffusing components of $\bm{\xi}_1$. The unique solution $\bm{\xi}$ of the homogeneous linear shadow problem \eqref{shadlinear} induces an evolution system, denoted by $\mathcal{W}$, with evolution operators  $\mathbf{W}(t,s)$ for $t,s \in \mathbb{R}_{\ge 0}, s \le t,$ defined by $\bm{\xi}(\cdot, t) = \mathbf{W}(t,s) \bm{\xi}(\cdot, s)$. To control the solutions of the nonlinear system, we assume the following condition on the linearized shadow system \eqref{shadlinear}.

\begin{assumption}[Stability of linearized shadow system]
 \label{AssumptionL1p} The shadow evolution system $\mathcal{W}$ is uniformly stable on $L^p(\Omega)^m \times \mathbb{R}$, i.e., there exist constants $1 \le p \le \infty$ and $C>0, \sigma \ge 0$ which are independent of time such that for all times $s, t \in \mathbb{R}_{\ge 0}, s \le t,$ there holds
\begin{align*}
\|\mathbf{W}(t,s) \bm{\xi}^0\|_{L^p(\Omega)^m \times \mathbb{R}} &\le C \mathrm{e}^{-\sigma (t-s)} \|\bm{\xi}^0\|_{L^p(\Omega)^m \times \mathbb{R}} \qquad \forall \; \bm{\xi}^0 = (\bm{\xi}^0_1, {\xi}^0_2) \in L^p(\Omega)^m \times \mathbb{R}.
\end{align*}
\end{assumption}

\noindent Note that the case $\sigma>0$ corresponds to uniform exponential stability of the evolution system $\mathcal{W}$. We refer to Section \ref{sec:stabverification} for a detailed discussion of this stability assumption which is crucial for the following results. Moreover, a brief description of the results of Section \ref{sec:stabverification} is given at the end of the next paragraph.\\ %we provide a way to verify Assumption \ref{AssumptionL1p} in the case of a stationary shadow limit and in the case when system \eqref{shadlinear} is dissipative. 

Concerning Assumption \ref{AssumptionL1p}, we distinguish between uniform stability in the space $L^\infty(\Omega)^m \times \mathbb{R}$ and in the spaces $L^p(\Omega)^m \times \mathbb{R}$ for finite $p< \infty$.

\subsection*{Stability in $L^\infty(\Omega)^m \times \mathbb{R}$}

On the basis of stability condition \ref{AssumptionL1p}, uniform estimates of the truncated errors $(\bm{\alpha}_\varepsilon, \beta_\varepsilon)$ are obtained within the proof. Finally, for an appropriate choice of the truncation, we gain estimates for the original errors $(\mathbf{U}_\varepsilon , V_\varepsilon)$ for all sufficiently large diffusion coefficients and a possibly restricted time interval.  
 
   \begin{theorem} \label{TheoremToy} Let $(\mathbf{f}, g)$ be twice continuously differentiable functions of $(\mathbf{u}, v)$ or, more generally, let Assumption \ref{AssumptionN} be fulfilled. Let Assumptions \ref{AssumptionB}--\ref{AssumptionL1p} hold with a stable evolution system $\mathcal{W}$ for $p= \infty$ and stability exponent $\sigma \ge 0$. Then there exist bounds $\alpha_0 \in (0,1)$, $\varepsilon_0 >0$ and a constant $C>0$ such that for any $\alpha \in [\alpha_0 , 1)$, $ \varepsilon \le \varepsilon_0$ and $T \le \varepsilon^{\alpha-1}$ we have
 \begin{align} \label{longestToy}
  \| \mathbf{u}_\varepsilon- \mathbf{u} \|_{L^\infty (\Omega_T)^m} +   \|  v_\varepsilon  - v - \psi_{\varepsilon} \|_{L^\infty (\Omega_T)} \leq C \varepsilon^{1-\alpha}.
 \end{align}
Moreover, if $\sigma>0$, we obtain the following global error estimate
 \begin{align} \label{globalestToy}
\|\mathbf{u}_\varepsilon - \mathbf{u}\|_{L^\infty(\Omega \times \mathbb{R}_{\ge 0})^m} +  \|v_\varepsilon-v-\psi_{\varepsilon}\|_{L^\infty(\Omega \times \mathbb{R}_{\ge 0})}   \le  C\varepsilon.
 \end{align}
 \end{theorem}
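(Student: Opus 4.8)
The plan is to work entirely with the truncated linearized system \eqref{truncsys}, whose solution $(\bm{\alpha}_\varepsilon, \beta_\varepsilon)$ coincides with the genuine error $(\mathbf{U}_\varepsilon, V_\varepsilon)$ as long as the cut-off stays inactive, and to prove that it does. Following the strategy of Figure \ref{figstrategy}, I would separate the two scales present in $\beta_\varepsilon$: writing $\beta_\varepsilon = \langle\beta_\varepsilon\rangle_\Omega + w_\varepsilon$ with $w_\varepsilon := \beta_\varepsilon - \langle\beta_\varepsilon\rangle_\Omega$, I first dispose of the spatial oscillation $w_\varepsilon$ and then observe that the reduced pair $(\bm{\alpha}_\varepsilon, \langle\beta_\varepsilon\rangle_\Omega)$ solves the linearized shadow system \eqref{shadlinear} up to an explicit small right-hand side. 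Averaging the $\beta_\varepsilon$-equation of \eqref{truncsys} over $\Omega$ (the Laplacian drops out by the Neumann condition) shows that $(\bm{\alpha}_\varepsilon, \langle\beta_\varepsilon\rangle_\Omega)$ satisfies \eqref{shadlinear} with forcing $\mathbf{r}_\varepsilon$ coming only from the terms $\nabla_v\mathbf{f}\cdot(w_\varepsilon + \psi_\varepsilon)$ and $\langle\nabla_v g\cdot(w_\varepsilon + \psi_\varepsilon)\rangle_\Omega$ and from the truncated remainders $\mathbf{F}_\varepsilon, \langle G_\varepsilon\rangle_\Omega$. Since the initial data vanish, the variation-of-constants formula for the evolution operator gives $(\bm{\alpha}_\varepsilon(\cdot,t), \langle\beta_\varepsilon(t)\rangle_\Omega) = \int_0^t \mathbf{W}(t,s)\,\mathbf{r}_\varepsilon(\cdot,s)\,\mathrm{d}s$, and Assumption \ref{AssumptionL1p} at $p=\infty$ bounds this by $\int_0^t C\mathrm{e}^{-\sigma(t-s)}\|\mathbf{r}_\varepsilon(\cdot,s)\|_{L^\infty(\Omega)^m\times\mathbb{R}}\,\mathrm{d}s$.

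The decisive gain of a power of $\varepsilon$ comes from the oscillation estimate. The function $w_\varepsilon$ solves the heat equation with diffusion $1/\varepsilon$, zero initial datum and zero-mean right-hand side $Ph_\varepsilon$, where $h_\varepsilon$ is the full right-hand side of the $\beta_\varepsilon$-equation and $P$ removes the mean. Using the spectral gap of the Neumann Laplacian together with the $L^\infty$-smoothing of the heat semigroup from the Appendix, I would bound $\|w_\varepsilon(\cdot,t)\|_{L^\infty} \le C\int_0^t \mathrm{e}^{-\lambda_1(t-s)/\varepsilon}\|Ph_\varepsilon(\cdot,s)\|_{L^\infty}\,\mathrm{d}s \le C\varepsilon\,\sup_s\|h_\varepsilon(\cdot,s)\|_{L^\infty}$, the factor $\varepsilon$ arising from $\int_0^t\mathrm{e}^{-\lambda_1(t-s)/\varepsilon}\,\mathrm{d}s\le\varepsilon/\lambda_1$. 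Because $h_\varepsilon$ is controlled by the unknowns themselves, of size at most $\|\bm{\alpha}_\varepsilon\|_\infty + \|\beta_\varepsilon\|_\infty$ plus the truncated remainder, the contribution of $w_\varepsilon$ to its own right-hand side carries a factor $\varepsilon$ and is absorbed for small $\varepsilon$. The correction $\psi_\varepsilon$ is handled by Lemma \ref{decayestmean}: its initial-layer part is exponentially small away from $t=0$ with time-integral $O(\varepsilon)$, while the part tracking the zero-mean discrepancy of the nonlinearity is of size $O(\varepsilon)$ uniformly in time. Hence $\|w_\varepsilon\|_\infty$ and $\|\psi_\varepsilon\|_\infty$ enter $\mathbf{r}_\varepsilon$ as a persistent forcing of size $O(\varepsilon)$.

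It then remains to close the estimate by tying the truncation radius to the target bound $\delta$, so that in the region $\|\bm{\alpha}_\varepsilon\|_\infty + \|\beta_\varepsilon\|_\infty\le\delta$ the remainders $\mathbf{F}_\varepsilon, G_\varepsilon$ are the genuine Taylor remainders of $\mathbf{f}, g$, hence of size $O(\delta^2)$ by the regularity of Assumption \ref{AssumptionN}. Inserting the $O(\varepsilon)$ persistent forcing and the $O(\delta^2)$ remainder into the Duhamel bound and applying Gronwall's inequality with kernel $\mathrm{e}^{-\sigma(t-s)}$ yields, for $\sigma=0$, a bound $N(t)\le C(\varepsilon t + \delta^2 t)\,\mathrm{e}^{C\varepsilon t}$ with $N(t)=\|\bm{\alpha}_\varepsilon\|_{L^\infty(\Omega\times(0,t))}+\|\langle\beta_\varepsilon\rangle_\Omega\|_{L^\infty(0,t)}$. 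On $t\le T\le\varepsilon^{\alpha-1}$ one has $\varepsilon t\le\varepsilon^{\alpha}$, so the exponential stays bounded; self-consistency $N(T)\le\delta$ with the quadratic term requires $\delta^2 T\lesssim\delta$, i.e.\ $\delta\sim T^{-1}=\varepsilon^{1-\alpha}$, while the persistent forcing contributes $\varepsilon T=\varepsilon^{\alpha}$, which is $\le\varepsilon^{1-\alpha}$ precisely when $\alpha\ge\tfrac12$. This is the origin of the threshold $\alpha_0$ and, together with the smallness needed to absorb the $w_\varepsilon$-feedback and keep the cut-off inactive, of $\varepsilon_0$. For $\sigma>0$ the time factor $\int_0^t\mathrm{e}^{-\sigma(t-s)}\,\mathrm{d}s\le\sigma^{-1}$ is uniform in $t$, the quadratic term $\delta^2/\sigma$ is negligible against $\delta\sim\varepsilon$, and the estimate closes globally at $\delta\sim\varepsilon$, giving \eqref{globalestToy}.

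The main obstacle, and the step I would be most careful with, is the bootstrap guaranteeing that the cut-off never switches on. Rather than a contraction, which is only marginal since the relevant product $\delta\,T$ is of order one at the endpoint, I would run a continuity argument: let $T^\ast$ be the maximal time up to which $\|\bm{\alpha}_\varepsilon\|_\infty+\|\beta_\varepsilon\|_\infty\le R$, derive on $[0,T^\ast]$ the a priori bound $N\le\delta$ together with $\|w_\varepsilon\|_\infty=O(\varepsilon\delta)$, and verify that for $\varepsilon\le\varepsilon_0$ and $\alpha\ge\alpha_0$ the pointwise quantity is forced strictly below $R$, so $T^\ast$ cannot be finite before $\varepsilon^{\alpha-1}$ (resp.\ before $\infty$ when $\sigma>0$). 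The other delicate point is the uniform-in-$\varepsilon$ $L^\infty$ control of the heat semigroup on zero-mean functions, combining short-time smoothing with spectral-gap decay, which is exactly what the Appendix is designed to supply; translating the $p=\infty$ stability of Assumption \ref{AssumptionL1p} into the pointwise bounds for $(\mathbf{U}_\varepsilon, V_\varepsilon)$ then completes the argument.
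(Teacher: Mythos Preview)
Your approach is essentially that of the paper: split $\beta_\varepsilon$ into mean and zero-mean parts, gain a factor $\varepsilon$ on $w_\varepsilon$ from the spectral gap (Lemma \ref{Winterlemma}), drive $(\bm{\alpha}_\varepsilon,\langle\beta_\varepsilon\rangle_\Omega)$ by the shadow evolution $\mathcal{W}$ via Duhamel, and use Lemma \ref{decayestmean} for $\psi_\varepsilon$. The parameter balance you arrive at ($\delta\sim\varepsilon^{1-\alpha}$, threshold $\alpha_0=1/2$) matches the paper's choice $\delta_0=1/2$, $\alpha_0=1-\delta_0$.

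The one place where you make life harder than necessary is the closing step. You frame it as a continuity/bootstrap argument (``let $T^\ast$ be the maximal time up to which $\|\bm{\alpha}_\varepsilon\|_\infty+\|\beta_\varepsilon\|_\infty\le R$''), worrying that the product $\delta\,T$ is only marginally small. The paper avoids this entirely: by construction of the cut-off (Lemma \ref{quadcut}), the truncated remainder $\mathbf{H}_\varepsilon$ is bounded by $C\varepsilon^{2\delta_0}$ (plus a term supported only on $[0,-\varepsilon\log\varepsilon/\lambda_1]$) \emph{unconditionally}, i.e.\ for all values of $(\bm{\alpha}_\varepsilon,\beta_\varepsilon)$, not just inside the ball of radius $\delta$. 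Hence Proposition \ref{nonerrorest} produces an a priori bound on the truncated solution with no bootstrap hypothesis at all; one then simply checks a posteriori that this bound sits below the truncation radius $\varepsilon^{\delta_0}$ (Lemma \ref{Removetrunc}), which is the inequality $\gamma=2\delta_0+(\alpha-1)>\delta_0$, i.e.\ $\alpha>1-\delta_0$. So the ``main obstacle'' you identify is not actually present once the cut-off is set up as in \eqref{Ucut}--\eqref{Tronc2}. Your continuity argument would also work, but it is redundant machinery here.

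A minor point you gloss over: near $t=0$ the correction $\psi_\varepsilon$ is $O(1)$, so $\overline{\beta_\varepsilon}=\beta_\varepsilon+\psi_\varepsilon$ is not small and the quadratic remainder cannot be capped by $\varepsilon^{2\delta_0}$ there. The paper handles this by the time-dependent switch in \eqref{Tronc2}, producing the extra term $\mathds{1}_{\{t\le-\varepsilon\log\varepsilon/\lambda_1\}}\min\{1,|\overline z|\}$ in Lemma \ref{quadcut}; its contribution to the Duhamel integral is then absorbed because the support has length $O(\varepsilon\log(1/\varepsilon))$. Your remark that the initial-layer part of $\psi_\varepsilon$ has time-integral $O(\varepsilon)$ captures the right idea, but in the actual estimate this feeds back through $|\overline{\beta_\varepsilon}|$ into the remainder and requires the short-time absorption of $b_\varepsilon$ carried out in the proof of Proposition \ref{nonerrorest}.
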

 
Theorem \ref{TheoremToy} is an extension of \cite[Theorem 3]{AMCAM:2015A} to long time ranges. The above result enables us to check various models for uniform approximation on long-time scales, see for instance the model examples in \cite{He3, Kondo, Miy05, Peng, Wei} or Section \ref{sec:modelex}. Such a result yields understanding of the long-term dynamics of the reaction-diffusion-ODE system \eqref{fullsys} based on results obtained for its shadow limit \eqref{shadow}. If the evolutionary system $\mathcal{W}$ is uniformly stable with respect to $L^\infty(\Omega)^m \times \mathbb{R}$, we obtain error estimates with explicit dependence on the time interval length $T$ that scales with the diffusion parameter $1/\varepsilon$. Stability implies estimate \eqref{longestToy} which is valid on $\Omega_T= \Omega \times (0,T)$ for $T \sim \varepsilon^{-\ell}$ and some $0< \ell <1$. Unfortunately, as \cite[Example 3]{KMCMlinear} shows, accuracy of the approximation for these transient states does not imply a valuable asymptotic approximation as $T \to \infty$.  However, assuming $\sigma>0$, i.e., uniform exponential stability of the linearized shadow system $\mathcal{W}$, yields the global error estimate \eqref{globalestToy} on $\Omega \times \mathbb{R}_{\ge 0}$. Note that the correction term $\psi_\varepsilon$ is of order $\varepsilon$ for all large times, see Lemma \ref{decayestmean}. Thus, it is negligible for a derivation of the long-time behavior of the solution of the reaction-diffusion-type system \eqref{fullsys} from the behavior of its shadow reduction. The proof of Theorem \ref{TheoremToy} is given in Section \ref{sec:errorestimates}. \\

Furthermore, Theorem \ref{TheoremToy} is an extension of the preceding study \cite{KMCMlinear} of a linear reaction-diffusion-ODE system with only time-dependent coefficients to the general nonlinear system \eqref{fullsys} with space- and time-dependent coefficients. In \cite{KMCMlinear}, however, a stability condition on the subsystem of non-diffusing components of the linear shadow system  \eqref{shadlinear} is used. To recall the latter condition, we delete all rows and columns of $\nabla_\mathbf{u} \mathbf{f}(\cdot,t)$ for which the diffusion coefficient is positive to obtain the function $t \mapsto \mathbf{A}_0(\cdot,t) \in L^\infty(\Omega)^{m_0 \times m_0}$ for some $0 \le m_0 \le m$. The solutions of the corresponding initial value problem
\begin{equation*}
\frac{\partial \bm{\psi}}{\partial t} = \mathbf{A}_0(\cdot, t) \bm{\psi} \qquad \text{in} \quad \Omega \times \mathbb{R}_{>0}, \qquad \bm{\psi}( \cdot, 0) = \bm{\psi}^0 \quad \text{in} \quad \Omega
\end{equation*}
define an evolution system $\mathbf{\mathcal{U}}$ consisting of evolution operators  $\mathbf{U}(t,s)$ on $L^\infty(\Omega)^{m_0}$ for $t,s \in \mathbb{R}_{\ge 0}$, $s \le t,$ given by $\bm{\psi}(\cdot, t) = \mathbf{U}(t,s) \bm{\psi}(\cdot, s)$ \cite[Ch. III, \S 1]{Daleckii}. In  \cite[Theorem 1]{KMCMlinear}, the following stability condition is used.

\begin{assumption}[Stability of ODE subsystem]
 \label{AssumptionL0} 
 The evolution system $\mathcal{U}$ is uniformly stable in $L^\infty(\Omega)^{m_0}$, i.e., there exist constants $C>0, \mu \ge 0$ independent of time such that 
\begin{align*}
\|\mathbf{U}(t,s) \bm{\psi}^0\|_{L^\infty(\Omega)^{m_0}} & \le C \mathrm{e}^{-\mu(t-s)} \|\bm{\psi}^0\|_{L^\infty(\Omega)^{m_0}} \qquad \forall \; \bm{\psi}^0 \in L^\infty(\Omega)^{m_0}, s,t \in \mathbb{R}_{\ge 0}, s \le t.
\end{align*} 
\end{assumption}

This stability condition is closely related to Assumption \ref{AssumptionL1p}. Although the case of a stationary shadow solution (see Proposition \ref{HardtLemma}) and the case of a dissipative linearized shadow problem (see Proposition \ref{Propdissfull}) show that Assumption \ref{AssumptionL1p} for some $1 \le p \le \infty$ implies Assumption \ref{AssumptionL0}, the authors could not prove whether this implication is true in general.  As shown in \cite{KMCMlinear}, Assumption \ref{AssumptionL0} is sufficient for space-independent problems. However, Model \ref{sec:expgrowthlin} in this work shows that problems with space-dependent shadow solutions require more assumptions than just uniform stability of the ODE subsystem $\mathcal{U}$, even in the linear case. This leads us to the stability condition \ref{AssumptionL1p} on the whole linearized shadow problem \eqref{shadlinear}. Note that Theorem \ref{TheoremToy} does not explicitly require the stability condition \ref{AssumptionL0}. As shown in the next paragraph, the stability assumption \ref{AssumptionL1p} for finite $p< \infty$ combined with Assumption \ref{AssumptionL0} on the non-diffusing components can be used to obtain a uniform error estimate.\\

%\subsection*{Verification of stability assumptions}

It is usually difficult to directly check the stability conditions \ref{AssumptionL1p}--\ref{AssumptionL0} for the linearization at the shadow limit solution. However, in the stationary case, stability properties of the linearization can be deduced from the spectrum of the corresponding linear operator. This is a consequence of the well-known spectral mapping theorem for analytic semigroups. The spectrum of the linearized shadow operator and its ODE subsystem is characterized for bounded stationary solutions in Section \ref{sec:spectral}. This allows not only to verify stability conditions \ref{AssumptionL1p}--\ref{AssumptionL0}, as shown in Corollary \ref{stability} and Remark \ref{rem:stabilityshadow}, but also allows general nonlinear stability results for bounded stationary solutions of the shadow system \eqref{shadow} obtained from a linear stability analysis. \\
Whereas numerical simulations can be employed to verify the stability assumptions in applications, we present another analytical possibility for time-dependent problems %of verifying the two stability assumptions described above.
in Section \ref{sec:stabverification}. As a particular case of stable systems we study dissipative linear shadow systems which imply the stability conditions \ref{AssumptionL1p}--\ref{AssumptionL0}. The notion of dissipativity is useful in particular if diffusion is involved in the corresponding
shadow system: We only impose conditions on the linear shadow part
without diffusion to obtain a stable shadow evolution system including diffusing components, see Corollary \ref{TheoremDissip}.

\subsection*{Stability in $L^p(\Omega)^m \times \mathbb{R}$ for $p< \infty$}

Stability condition \ref{AssumptionL1p} can also be considered in the space $L^p(\Omega)^m \times \mathbb{R}$ for sufficiently big $p<\infty$. In such case, parabolic $L^{p,r}$ estimates can be employed to conclude on $L^\infty$ bounds for the diffusing components, see Proposition \ref{thees}. However, for an $L^\infty$ estimate of the non-diffusing components, we additionally assume the uniform stability condition \ref{AssumptionL0} on the ODE subsystem as in \cite{KMCMlinear}. This procedure yields the second main result of this work.

 \begin{theorem} \label{Theorem} Let Assumptions \ref{AssumptionN}--\ref{AssumptionL0} hold for some $1 \le p< \infty$ and $\mu, \sigma \ge 0$ with $p>n/2$ if $n \ge 2$ and let $r \in (1, \infty)$ be given by the relation $1/r + n/(2p) <1.$ Then there exist bounds $\alpha_0 = \alpha_0(r) \in (0,1), \varepsilon_0 >0$ and a constant $C>0$ such that for any $\alpha \in [\alpha_0 , 1)$, $\varepsilon \le \varepsilon_0$ and times $T \le \varepsilon^{\alpha-1}$ we have the uniform estimates
 \begin{align} \label{longest}
 \begin{split}
  \| \mathbf{u}_\varepsilon- \mathbf{u} \|_{L^\infty (\Omega_T)^m} & \leq C \varepsilon^{3(1-\alpha)}, \\
 \|  v_\varepsilon  - v - \psi_\varepsilon \|_{L^\infty (\Omega_T)} & \leq C \varepsilon^{4(1-\alpha) },\\  
 \| \langle v_\varepsilon \rangle_\Omega - v \|_{L^\infty ((0,T))} & \leq C \varepsilon^{4(1-\alpha) }.
 \end{split}
 \end{align}
Moreover, if $\mu>0$, all components of $\mathbf{u}_\varepsilon - \mathbf{u}$ and  $v_\varepsilon  - v - \psi_\varepsilon$ can be estimated by $C\varepsilon^{4(1-\alpha) }$.
 \end{theorem}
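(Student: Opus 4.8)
The plan is to run the three-step scheme behind Theorem \ref{TheoremToy}, replacing the $L^\infty$-stability of $\mathcal{W}$ by the $L^p$-stability of Assumption \ref{AssumptionL1p} and recovering $L^\infty$-control only at the end --- by parabolic regularity for the diffusing components and by Assumption \ref{AssumptionL0} for the non-diffusing ones. Starting from the truncated system \eqref{truncsys}, I would first split off the zero-mean part $\tilde\beta_\varepsilon := \beta_\varepsilon - \langle\beta_\varepsilon\rangle_\Omega$. Since $\tilde\beta_\varepsilon$ lies in the subspace on which the Neumann heat semigroup $\mathrm{e}^{t\Delta/\varepsilon}$ decays at a rate proportional to $1/\varepsilon$, a Duhamel estimate yields $\|\tilde\beta_\varepsilon\|\le C\varepsilon\,(\text{sources})$, i.e. a gain of one power of $\varepsilon$. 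Averaging the $\beta_\varepsilon$-equation over $\Omega$, where the Laplacian drops out by the Neumann condition, then reduces the problem to the shadow unknowns $(\bm{\alpha}_\varepsilon,\langle\beta_\varepsilon\rangle_\Omega)$, whose linear part is exactly \eqref{shadlinear} and is therefore propagated by the evolution operator $\mathbf{W}(t,s)$.

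Next I would write the variation-of-constants formula for $(\bm{\alpha}_\varepsilon,\langle\beta_\varepsilon\rangle_\Omega)$ against $\mathbf{W}(t,s)$ and invoke Assumption \ref{AssumptionL1p} to bound it in $L^p(\Omega)^m\times\mathbb{R}$ by the $\mathbf{W}$-weighted time-integral of the sources. These split into the correction contributions $\nabla_v\mathbf{f}\cdot\psi_\varepsilon$ and $\langle\nabla_v g\cdot\psi_\varepsilon\rangle_\Omega$, whose integral is $O(\varepsilon)$ by Lemma \ref{decayestmean}, the coupling with $\tilde\beta_\varepsilon$, which is $O(\varepsilon)$ times the solution and hence a genuinely small perturbation, and the truncated remainders $\mathbf{F}_\varepsilon,G_\varepsilon$, which are quadratically small in the cut-off radius $R$. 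Because $\|\mathbf{W}(t,s)\|\le C\mathrm{e}^{-\sigma(t-s)}$, the decisive term is the remainder: its weighted integral is $O(R^2)$ when $\sigma>0$, but only $O(R^2 T)\le O(R^2\varepsilon^{\alpha-1})$ when $\sigma=0$. Requiring the solution to remain inside the truncation ball then forces $R$ to be a power of $\varepsilon^{1-\alpha}$ in the long-time regime and permits $R\sim\varepsilon$ globally when $\sigma>0$; this is the dichotomy underlying the finite-horizon and the global estimates.

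The core of the argument is upgrading this $L^p$-in-space, hence $L^{p,r}$ space-time, control to the $L^\infty(\Omega_T)$ bounds in \eqref{longest}. For the diffusing components I would feed the $L^{p,r}$ bound into the parabolic regularity estimate of Proposition \ref{thees}; the hypothesis $1/r+n/(2p)<1$ is precisely the embedding $W^{2,1}_{p,r}\hookrightarrow L^\infty$, so the $L^{p,r}$ bound is traded for an $L^\infty$ bound, the diffusing block moreover gaining additional smallness from the zero-mean decay. For the non-diffusing components, where no smoothing is available, I would read the corresponding rows of the $\bm{\alpha}_\varepsilon$-equation as a pointwise-in-$x$ linear ODE governed by $\mathbf{A}_0$ and driven by the already $L^\infty$-bounded diffusing variables and sources, and apply the $L^\infty$-stability of $\mathcal{U}$ from Assumption \ref{AssumptionL0} through Duhamel --- this is the step that genuinely needs \ref{AssumptionL0}. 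Closing this bootstrap self-consistently against the truncation radius fixes the exponents, with $v_\varepsilon-v-\psi_\varepsilon$ ending one power of $\varepsilon^{1-\alpha}$ ahead of $\mathbf{u}_\varepsilon-\mathbf{u}$, the origin of the respective rates $3(1-\alpha)$ and $4(1-\alpha)$; the same argument applied to the mean gives the bound on $\langle v_\varepsilon\rangle_\Omega-v$. When in addition $\mu>0$ and $\sigma>0$ the weighted time-integrals converge uniformly in $T$, so the estimates extend to $T=\infty$.

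The hard part will be keeping every constant uniform in both $\varepsilon$ and the growing horizon $T\sim\varepsilon^{\alpha-1}$ throughout the bootstrap: the parabolic regularity constant for $\mathrm{e}^{t\Delta/\varepsilon}$ must not degenerate as $T\to\infty$, which I would control by covering $(0,T)$ with subintervals of unit length and summing their contributions against the decay $\mathrm{e}^{-\sigma(t-s)}$. The non-diffusing block is the second delicate point: its $L^\infty$-control rests solely on Assumption \ref{AssumptionL0} and must be synchronized with the $L^p\to L^\infty$ gain for the diffusing variables. It is precisely this coupling, iterated until it closes against the truncation radius, that forces the two-tier powers $3(1-\alpha)$ and $4(1-\alpha)$ rather than a single uniform rate, and making that closure quantitative while uniform in $T$ is where the real work lies.
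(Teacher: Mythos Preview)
Your overall strategy matches the paper's: split $\beta_\varepsilon$ into mean and zero-mean parts, use Assumption~\ref{AssumptionL1p} on the shadow evolution $\mathcal{W}$ to obtain $L^p$-in-space control of $(\bm{\alpha}_\varepsilon,\langle\beta_\varepsilon\rangle_\Omega)$ (this is Proposition~\ref{nonerrorest}), then upgrade diffusing components to $L^\infty$ via the parabolic estimate of Proposition~\ref{thees} and the non-diffusing components via Assumption~\ref{AssumptionL0} (this is Proposition~\ref{LinftynD}), and finally close against the truncation radius via Lemma~\ref{Removetrunc}. The algebraic exponent-matching you anticipate is precisely what the paper does in the proof, producing the rates $3(1-\alpha)$ and $4(1-\alpha)$.

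There is, however, a genuine error in your last paragraph. You claim that when $\mu>0$ and $\sigma>0$ the weighted time-integrals converge uniformly in $T$ and the estimates extend to $T=\infty$. The theorem makes no such claim: the clause ``if $\mu>0$'' only improves the \emph{rate} for the non-diffusing components to $4(1-\alpha)$, still under the restriction $T\le\varepsilon^{\alpha-1}$. The paper explicitly states (just after the theorem) that global-in-time estimates are \emph{not} obtainable by this bootstrap for finite $p$. The obstruction is the parabolic $L^{p,r}\to L^\infty$ step: Proposition~\ref{thees} gives
\[
\|\beta_\varepsilon\|_{L^\infty(\Omega_T)}\le C\,T^{1-1/r}\|R_\varepsilon\|_{p,r},
\]
and the factor $T^{1-1/r}$ grows polynomially in $T$. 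Your proposed remedy --- covering $(0,T)$ by unit intervals and summing against $\mathrm{e}^{-\sigma(t-s)}$ --- does not work, because the $L^\infty$ estimate on each subinterval must be initialised with the $L^\infty$ value at its left endpoint, for which the heat semigroup is merely a contraction (no decay). Summing over $O(T)$ intervals therefore still produces polynomial growth in $T$, and the closure against the truncation radius $\varepsilon^{\delta_0}$ forces the time restriction. This is exactly why the global statement~\eqref{globalestToy} lives in Theorem~\ref{TheoremToy} (case $p=\infty$), not here.
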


Although estimates in \eqref{longest} are weaker than the corresponding result of Theorem \ref{TheoremToy}, they provide quantitative information on the convergence rate, i.e., the rate of how fast the diffusing solution $(\mathbf{u}_\varepsilon, v_\varepsilon)$ converges to its shadow limit $(\mathbf{u}, v)$ for large times. If one restricts to space dimensions $n \le 3$, we may consider the Hilbertian case $p=2$ which is usually studied in applications \cite{Kondo, Miy05, Wei}. Note that the additional Assumption \ref{AssumptionL0} does not have to be verified explicitly in the case of a stationary shadow solution and the case of a dissipative linearized shadow problem, see Section \ref{sec:stabverification}. The proof of Theorem \ref{Theorem} is given in Section \ref{sec:errorestimates}. \\
Necessity of the restriction of the time interval to $T \le \varepsilon^{\alpha -1}$ is already shown in the linear case in \cite[Example 3]{KMCMlinear}. The stability conditions \ref{AssumptionL1p}--\ref{AssumptionL0} are optimal in the sense that there are examples where uniform convergence on long-time intervals may not be achievable in the absence of uniform boundedness in $L^\infty(\Omega)^{m_0}$ or $L^p(\Omega)^m \times \mathbb{R}$, compare the linear examples \cite[Example 2]{KMCMlinear} and Model \ref{sec:expgrowthlin}. The restriction of the time interval in the case of an exponentially stable evolution system $\mathcal{W}$ is due to stability in $L^p(\Omega)^m \times \mathbb{R}$ for some finite $p< \infty$ instead of $p=\infty$. In the proof of Theorem \ref{Theorem} it is not possible to infer estimates on the global time scale as we employ a parabolic bootstrap argument.

 \begin{remark}
We refer to \cite[Ch. 7]{Kowall} for shadow systems with different boundary conditions or differential operators. {Although the semigroup framework used in this work seems very versatile, an adaption to similar problems on long-time scales is left as a future task. Formulation ok?}%we strongly believe that the results are adaptable to similar problems on long-time scales.  %consider classical shadow systems and reaction-diffusion-ODE systems simultaneously.
\end{remark}

%%%%%%%%%%%%%%%%%%%%%%%%%%%%%%%%%%%%%%%%%%%%%%%%%%%%%%%%%%%%%%%%%%%%

\section{Well-posedness and preliminary results}\label{sec:preliminary}

This section is devoted to basic properties of mild solutions of the main systems \eqref{fullsys} and \eqref{shadow} such as existence and uniqueness on short-time intervals. Furthermore, we define and study the mean value correction $\psi_\varepsilon$ and show that it is small in terms of $\varepsilon$ for all large times. 

\subsection{Reaction-diffusion-ODE problem} 
Let us recall short-time well-posedness of the reaction-diffusion-ODE system \eqref{fullsys} for nonlinearities
  \begin{equation*}
\mathbf{f}: \mathbb{R}^m \times \mathbb{R} \times \overline{\Omega} \times \mathbb{R} \to \mathbb{R}^m \qquad \text{and} \qquad g: \mathbb{R}^m \times\mathbb{R} \times \overline{\Omega} \times \mathbb{R} \to \mathbb{R}
\end{equation*} 
fulfilling Assumption \ref{AssumptionN}. Usually, local-in-time existence of a solution $\mathbf{u}_\varepsilon: \Omega_T \rightarrow \mathbb{R}^{m}$ and  $v_\varepsilon: \Omega_T \rightarrow \mathbb{R}$ follows from standard theory \cite{Ladyparab, Pazy, Rothe}. %Henry
Since we want to impose only low regularity on the spatial domain $\Omega$ and the solutions, we recollect some definitions and the main idea of the proof of \cite[Part II, Theorem 1]{Rothe} which also applies in this case. Therefore, we rewrite system \eqref{fullsys} as a system of $m+1$ reaction-diffusion-type equations:
\begin{align}
\frac{\partial \bm{\Psi}}{\partial t} - \mathbf{D}_\varepsilon \Delta \bm{\Psi}   = \mathbf{h}(\bm{\Psi},x,t) \quad \text{in} \quad \Omega_T, \quad \; 
\bm{\Psi}(\cdot, 0)  =(\mathbf{u^0}, v^0) \quad \text{in} \quad \Omega, \quad \;  \frac{\partial \bm{\Psi}}{\partial \nu}  =\mathbf{0} \quad \mbox{on} \quad  \partial \Omega \times (0,T) \label{Rotheset1}
\end{align}
Here, $\bm{\Psi}=(\mathbf{u}_\varepsilon, v_\varepsilon), \mathbf{h} = (\mathbf{f}, g)$, and $\mathbf{D}_\varepsilon = \mathrm{diag}(\mathbf{D}, \varepsilon^{-1}) \in \mathbb{R}_{\ge 0}^{(m+1) \times (m+1)}$ is a diagonal matrix. If we consider this system for the decoupled case, i.e., for $\mathbf{h}=\mathbf{0}$, the component $v_\varepsilon$ induces an analytic semigroup $(S_\Delta(\tau))_{\tau \in \mathbb{R}_{\ge 0}}$ on $L^p(\Omega)$ for each $p \in (1, \infty)$ by $v_\varepsilon( \cdot,t) = S_\Delta(t/\varepsilon) v^0$ for initial values $v^0 \in L^p(\Omega)$, see Proposition \ref{heathom}. This semigroup then can be restricted to $L^\infty(\Omega)$ independently of $p$ and we obtain a formal contraction semigroup on $L^\infty(\Omega)$ which is in not strongly continuous \cite[Part I, Lemma 2]{Rothe}. %However, due to the maximum principle, the semigroup $(S_\Delta(\tau))_{\tau \in \mathbb{R}_{\ge 0}}$ is contractive on $L^\infty(\Omega)$, i.e.,
%\[
%\|S_\Delta(\tau) z\|_{L^\infty(\Omega)} \le \|z \|_{L^\infty(\Omega)} \qquad \forall \; z \in L^\infty(\Omega), \tau \in \mathbb{R}_{\ge 0}.
%\]
Using this semigroup approach, we define similarly to Rothe, \cite[Part II, Definition 2]{Rothe}:

\begin{definition} \label{mildsol}
Let $0 < T \le \infty$. A mild solution of problem \eqref{Rotheset1} on the interval $[0, T)$ for initial values $\bm{\Psi}^0=(\mathbf{u}^0, v^0) \in L^\infty(\Omega)^{m+1}$ is a measurable function
$
\bm{\Psi}: \Omega \times [0,T) \to \mathbb{R}^{m+1}
$
satisfying for all $t \in (0,T)$
\begin{itemize}
\item[(i)] $\bm{\Psi}(\cdot,t) \in L^{\infty}(\Omega)^{m+1}$ and \, $\sup_{s \in (0,t)} \|\bm{\Psi}(\cdot,s)\|_{L^\infty(\Omega)^{m+1}} < \infty$,

\item[(ii)] the integral representation
\begin{align}
\bm{\Psi}(\cdot,t) = \mathbf{S}(t)\bm{\Psi}^0 + \int_0^t \mathbf{S}((t-s)) \mathbf{h}(\bm{\Psi}(\cdot,s), \cdot, s) \; \mathrm{d}s, \label{Rotheint}
\end{align}
\end{itemize}
where the integral is an absolute converging Bochner integral in $L^{\infty}(\Omega)^{m+1}$ and the semigroup $(\mathbf{S}(\tau))_{\tau \in \mathbb{R}_{\ge 0}}$ on $L^\infty(\Omega)^{m+1}$ has components $S_i(\tau)=S_\Delta(D_i \tau)$ for $i=1, \dots, m$ and $S_{m+1}(\tau) = S_\Delta(\tau/\varepsilon)$. Specifically for $D_i=0$, $S_i(\tau) = I$ is the identity on $L^\infty(\Omega)$ for all $\tau \in \mathbb{R}_{\ge 0}$.
\end{definition}

Following the proof of \cite[Part II, Theorem 1]{Rothe} we obtain a mild solution of problem \eqref{Rotheset1} if we assume the local Lipschitz condition \eqref{Loclip} for $\mathbf{h}$ and bounded initial data $\bm{\Psi}^0$. Although \cite{Rothe} uses higher regularity of the boundary $\partial \Omega$, the proof is analog, applying a Picard iteration on $L^\infty(\Omega)^{m+1}$ to the implicit integral representation \eqref{Rotheint} and using the semigroup $(\mathbf{S}(\tau))_{\tau \in \mathbb{R}_{\ge 0}}$ for $\partial \Omega \in C^{0,1}$. 

 \begin{proposition} \label{prop1} Let Assumption \ref{AssumptionN} and $(\mathbf{u}^0,  v^0) \in L^\infty (\Omega)^{m+1}$ hold. Then for each $\varepsilon>0$ there exists a maximal time interval $[0, T_{\max})$ for $T_{\max} = T_{\max}(\varepsilon) >0$, such that problem \eqref{fullsys} has a unique mild solution $\bm{\Psi}$ on $[0, T_{\max})$ satisfying $\bm{\Psi} = (\mathbf{u}_\varepsilon, v_\varepsilon) \in L^\infty (\Omega_T)^{m+1}$ for each $T < T_{\max}.$ Furthermore, the solutions $(\mathbf{u}_\varepsilon, v_\varepsilon)_{\varepsilon>0}$ locally exist on the same time interval, i.e., $\inf_{\varepsilon \in (0,1]} T_{\max}(\varepsilon) >0$.
 \end{proposition}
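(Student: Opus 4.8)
\emph{The plan is to} solve the implicit integral equation \eqref{Rotheint} by a contraction-mapping argument in $L^\infty$, following the Picard iteration of \cite[Part II, Theorem 1]{Rothe}, and then to trace the $\varepsilon$-dependence of the resulting existence time. Fix $\varepsilon>0$ and work in $X_T = L^\infty(\Omega_T)^{m+1}$ with norm $\|\bm{\Psi}\|_{X_T} = \sup_{t\in(0,T)}\|\bm{\Psi}(\cdot,t)\|_{L^\infty(\Omega)^{m+1}}$, the natural setting dictated by Definition \ref{mildsol}(i). Let $\mathcal{T}$ denote the map sending $\bm{\Psi}$ to the right-hand side of \eqref{Rotheint}. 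On the closed ball $B_R = \{\bm{\Psi}\in X_T : \|\bm{\Psi}\|_{X_T}\le R\}$ with $R := 2\|\bm{\Psi}^0\|_{L^\infty(\Omega)^{m+1}}+1$, I would invoke Assumption \ref{AssumptionN} on the bounded set $B = \overline{B_R}\times\overline{\Omega}\times[0,T]$ to obtain a single constant $L=L(B)$ controlling both $|\mathbf{h}|$ and its Lipschitz modulus there. Since the Neumann heat semigroup is $L^\infty$-contractive, each component of $(\mathbf{S}(\tau))_{\tau\ge 0}$ satisfies $\|\mathbf{S}(\tau)\|_{L^\infty\to L^\infty}\le 1$ (see \cite[Part I, Lemma 2]{Rothe} and Proposition \ref{heathom}), whence
\[
\|(\mathcal{T}\bm{\Psi})(\cdot,t)\|_{L^\infty} \le \|\bm{\Psi}^0\|_{L^\infty} + L\,T, \qquad \|(\mathcal{T}\bm{\Psi}-\mathcal{T}\tilde{\bm{\Psi}})(\cdot,t)\|_{L^\infty} \le L\,T\,\|\bm{\Psi}-\tilde{\bm{\Psi}}\|_{X_T}
\]
for all $\bm{\Psi},\tilde{\bm{\Psi}}\in B_R$ and $t\in(0,T)$. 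Choosing $T_0 = \min\{(R-\|\bm{\Psi}^0\|_{L^\infty})/L,\ 1/(2L)\}$ makes $\mathcal{T}$ a self-map of $B_R$ and a strict contraction, so Banach's fixed-point theorem yields a unique mild solution on $[0,T_0)$.

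\emph{Two technical points} deserve attention and are handled exactly as in \cite{Rothe}. First, for \eqref{Rotheint} to define an absolutely convergent Bochner integral in the non-separable space $L^\infty(\Omega)$, one must check strong measurability of $s\mapsto \mathbf{S}(t-s)\mathbf{h}(\bm{\Psi}(\cdot,s),\cdot,s)$; this follows from the Carath\'{e}odory structure of $\mathbf{h}$ granted by Assumption \ref{AssumptionN} together with the mapping properties of $\mathbf{S}(\tau)$, while the bound $|\mathbf{h}|\le L$ supplies an integrable dominating function. Because the argument uses only Lipschitz and boundedness estimates on bounded sets, and the relaxation of $\partial\Omega$ to Lipschitz regularity enters solely through the semigroup of Proposition \ref{heathom}, Rothe's proof transfers with no essential change. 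Second, I would define $T_{\max}(\varepsilon)$ as the supremum of existence times and glue the local pieces by uniqueness; the standard blow-up alternative then gives that either $T_{\max}=\infty$ or $\limsup_{t\to T_{\max}^-}\|\bm{\Psi}(\cdot,t)\|_{L^\infty}=\infty$, so that $\bm{\Psi}\in L^\infty(\Omega_T)^{m+1}$ for every $T<T_{\max}$.

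\emph{The decisive point} is the $\varepsilon$-uniform lower bound $\inf_{\varepsilon\in(0,1]}T_{\max}(\varepsilon)>0$. Here one observes that the existence time $T_0$ constructed above depends only on $R$ (hence on the fixed quantity $\|\bm{\Psi}^0\|_{L^\infty}$) and on the constant $L(B)$ of $\mathbf{h}$, and that the sole appearance of $\varepsilon$ in the estimates is through the semigroup component $S_{m+1}(\tau)=S_\Delta(\tau/\varepsilon)$. Since this component, like every other, is an $L^\infty$-contraction \emph{uniformly in} $\varepsilon$, the self-map and contraction bounds, and therefore $T_0$, are completely independent of $\varepsilon$; consequently $T_{\max}(\varepsilon)\ge T_0>0$ for all $\varepsilon\in(0,1]$. \textbf{The main obstacle} is thus not the fixed-point construction itself but the bookkeeping needed to confirm that the contractivity constant of $(\mathbf{S}(\tau))$ on $L^\infty$ equals $1$ regardless of the diffusion coefficient $1/\varepsilon$; once this is isolated, the $\varepsilon$-uniformity is immediate.
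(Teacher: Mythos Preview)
Your proposal is correct and follows essentially the same approach as the paper: the paper's own proof is a two-sentence reference to the Picard iteration of \cite[Part II, Theorem~1]{Rothe} applied to the integral equation \eqref{Rotheint}, noting only that the argument carries over verbatim to Lipschitz boundaries via the semigroup of Proposition~\ref{heathom}. You have simply unpacked that reference, and your identification of the $\varepsilon$-uniform $L^\infty$-contractivity of $S_\Delta(\tau/\varepsilon)$ as the reason for $\inf_{\varepsilon\in(0,1]}T_{\max}(\varepsilon)>0$ is exactly the point the paper leaves implicit.
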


%\begin{remark}
The integral representation for $\mathbf{u}_\varepsilon$ yields temporal continuity of non-diffusing components in $t=0$, i.e., $u_{\varepsilon,i} \in C ([0,T ]; L^\infty (\Omega ))$. Choosing more regular initial data, nonlinearities and a more regular boundary%, i.e., $\partial \Omega \in C^{2,\alpha}$ and $\mathbf{u}^0 \in C^{0,\alpha}(\overline{\Omega})^m$ and $v^0  \in C^{2,\alpha}(\overline{\Omega})$ which satisfies the compatibility condition 
%\[
%\frac{\partial v^0} {\partial \nu} = 0 \qquad \mathrm{on} \quad \partial \Omega,
%\]
%then with some additional H\"older continuity of $\mathbf{f},g$ in $x$ and $t$
, we obtain H\"older continuous solutions that satisfy the equations in the classical sense \cite[Part II, Theorem 1]{Rothe}. %and \cite[pp. 7-8]{Ladyparab} for the corresponding parabolic H\"older spaces.
%\end{remark}

\subsection{Shadow problem} \label{sec:shadow}

Similar to the definition of a mild solution of the reaction-diffusion-ODE system \eqref{fullsys}, a mild solution of the corresponding shadow limit satisfies the following integral equations for $i=1, \dots,m$
\begin{align} \label{int}
\begin{split}
u_i (\cdot,t) &= S_\Delta(D_i t) u^0_i + \int_0^t S_\Delta(D_i (t-s)) f_i(\mathbf{u}(\cdot,s),v(s),\cdot,s) \; \mathrm{d}s,\\
v(t) & = \langle v^0 \rangle_\Omega + \int_0^t \langle  g(\mathbf{u}(\cdot,s),v(s),\cdot,s) \rangle_\Omega \; \mathrm{d}s.
\end{split}
\end{align} 
Under the assumptions of Proposition \ref{prop1}, an application of a Picard iteration yields a unique solution of equations \eqref{int}, compare also \cite[Theorem 1]{AMCAM:2015A}. 

\begin{proposition} \label{prop2} Let Assumption \ref{AssumptionN} and $(\mathbf{u}^0,  v^0) \in L^\infty (\Omega)^{m+1}$ hold. Then there exists a maximal time interval $[0, T_{\max})$ for $T_{\max} >0$, such that problem \eqref{shadow} has a unique mild solution $(\bm{u}, v)$ on $[0, T_{\max})$ satisfying $(\mathbf{u}, v) \in L^\infty (\Omega_T)^{m} \times C([0,T];\mathbb{R}) $ for each $T < T_{\max}.$ 
\end{proposition}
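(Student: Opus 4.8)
The plan is to prove Proposition \ref{prop2} by a fixed-point argument entirely analogous to the one sketched for Proposition \ref{prop1}, but now set in a function space that reflects the mixed reaction-diffusion/ODE nature of the shadow system \eqref{shadow}. The unknowns are $\mathbf{u}(\cdot,t) \in L^\infty(\Omega)^m$ and a scalar $v(t) \in \mathbb{R}$, coupled through the integral equations \eqref{int}. The natural space for a contraction mapping is therefore
\[
X_T = C([0,T]; L^\infty(\Omega))^m \times C([0,T]; \mathbb{R}),
\]
or, more simply, the Banach space of measurable functions on $\Omega \times [0,T]$ with the sup-bound property (i) of Definition \ref{mildsol} for $\mathbf{u}$ and genuine continuity for $v$, equipped with the norm $\|(\mathbf{u},v)\|_{X_T} = \sup_{t \in [0,T]}\bigl(\|\mathbf{u}(\cdot,t)\|_{L^\infty(\Omega)^m} + |v(t)|\bigr)$.

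The key steps I would carry out in order: First, fix an a priori ball $B \subset X_T$ around the initial data $(\mathbf{u}^0, \langle v^0\rangle_\Omega)$ and identify, via Assumption \ref{AssumptionN}, the bounded set in $\mathbb{R}^{m+1} \times \overline\Omega \times \mathbb{R}_{\ge 0}$ covered by the range of candidate solutions on $[0,T]$, thereby obtaining a single Lipschitz and bound constant $L = L(B)$ from \eqref{Loclip}. Second, define the fixed-point map $\Phi$ on $B$ by the right-hand sides of \eqref{int}: the $\mathbf{u}$-component uses the semigroups $S_\Delta(D_i t)$ (the identity when $D_i = 0$), which are contractions on $L^\infty(\Omega)$ by the discussion preceding Definition \ref{mildsol}, while the $v$-component uses the spatial mean $\langle g(\cdot)\rangle_\Omega$. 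Third, verify self-mapping: using $\|S_\Delta(\tau)\|_{L^\infty \to L^\infty} \le 1$, the bound $|\mathbf{h}| \le L$, and $|\langle \cdot \rangle_\Omega| \le \|\cdot\|_{L^\infty}$, one gets $\|\Phi(\mathbf{u},v)(\cdot,t) - (\mathbf{S}\text{-data})\| \le L\,t$, so $\Phi(B) \subset B$ for $T$ small. Fourth, verify contraction: the Lipschitz estimate \eqref{Loclip} gives, after taking $L^\infty$-norms and using the contraction property of the semigroup and of the mean value, a bound of the form $\|\Phi(\mathbf{u}_1,v_1) - \Phi(\mathbf{u}_2,v_2)\|_{X_t} \le L\, t\, \|(\mathbf{u}_1,v_1)-(\mathbf{u}_2,v_2)\|_{X_t}$, which is a strict contraction once $LT < 1$. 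Banach's fixed-point theorem then yields a unique local mild solution; extending to a maximal interval $[0, T_{\max})$ is the standard continuation argument, where one reapplies the local result from the current time level as long as the solution stays bounded.

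The regularity claim $v \in C([0,T];\mathbb{R})$ is immediate because the second equation in \eqref{int} exhibits $v$ as an integral of a bounded integrand, hence $v$ is even Lipschitz (indeed $C^1$) in time; the claim $\mathbf{u} \in L^\infty(\Omega_T)^m$ follows from the self-mapping bound. The only genuine point of care, and what I expect to be the main obstacle, is handling the non-diffusing components $D_i = 0$ together with the failure of strong continuity of the $L^\infty$-semigroup: the contraction semigroup $(S_\Delta(\tau))$ on $L^\infty(\Omega)$ is not strongly continuous, so one cannot invoke the usual $C_0$-semigroup machinery to get $t \mapsto \mathbf{u}(\cdot,t)$ continuous into $L^\infty(\Omega)$ in the norm topology. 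This is precisely why the definition is framed through the implicit integral equation \eqref{Rotheint}/\eqref{int} with only the weaker measurability-plus-sup-bound condition (i), and why the Picard iteration must be run in the space $L^\infty(\Omega)^m$ rather than a space requiring norm-continuity in time. One resolves this by noting that the fixed-point estimates above use only the uniform operator bound $\|S_\Delta(\tau)\|_{L^\infty\to L^\infty}\le 1$, which holds irrespective of strong continuity, so the argument goes through verbatim; the temporal continuity that one does obtain for free is the $C([0,T];L^\infty)$-regularity of the non-diffusing components at $t=0$ coming from the identity semigroup, exactly as remarked after Proposition \ref{prop1}.
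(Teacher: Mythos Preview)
Your proposal is correct and follows exactly the approach the paper indicates: the paper's own proof is a one-line reference to Picard iteration on the integral equations \eqref{int} (citing \cite[Theorem 1]{AMCAM:2015A}), and your write-up is a faithful and appropriately detailed execution of that argument, including the correct handling of the non-strongly-continuous $L^\infty$-semigroup via the weak Definition \ref{mildsol}.
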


Let us note that, in general, diffusing components are not continuous up to $t=0$ with respect to $L^\infty(\Omega)$ \cite[Part I, Lemma 2]{Rothe}. However, non-diffusing components satisfy  $u_{i} \in C ([0,T ]; L^\infty (\Omega ))$ by  formula  \eqref{int}. \\

Assumption \ref{AssumptionB} essentially requires a globally defined, uniformly bounded solution of the shadow limit \eqref{shadow}. %Finally, using equation \eqref{int}, Assumption \ref{AssumptionB} implies $(\mathbf{u}, v) \in C(\mathbb{R}_{\ge 0}; L^\infty(\Omega)^m \times \mathbb{R})$ for the shadow limit. 
 Clearly,  if  there exist continuous functions $c_1, c_2: \mathbb{R}_{\ge 0} \to \mathbb{R}_{\ge 0}$ such that
  \begin{equation*} %\label{Growth}
     \|\mathbf{f} (\mathbf{y} , \cdot , t) \|_{L^\infty(\Omega)^m} + |
   \langle g(\mathbf{y},  \cdot , t) \rangle_\Omega |\leq c_1(t) + c_2(t) | \mathbf{y} | \qquad \forall \; \mathbf{y} \in \mathbb{R}^{m+1},
   \end{equation*}
then every maximal solution to the shadow problem \eqref{shadow} is global. The global existence can be, however, achieved also for weaker assumptions on the model nonlinearities and it has to be checked separately for specific models. Additionally, one has to verify uniform boundedness of the global solution for fulfillment of Assumption \ref{AssumptionB}. This task strongly depends on the considered model and different methods are used, e.g., see \cite{Kondo, Rothe} or Model \ref{sec:PPmodel}. We mention that it is often useful to consider the corresponding differential equation for the masses $(\langle \mathbf{u} \rangle_\Omega, v)$ in order to derive properties of the shadow limit $( \mathbf{u}, v)$ itself.  If, in addition to a uniformly bounded shadow limit, $g$ and the gradients of the nonlinearities are uniformly bounded in the space and time variable, then Assumption \ref{AssumptionB} is satisfied. The latter condition implies a bounded linearization to facilitate the analysis and, moreover, it implies a certain decay estimate of the mean value correction $\psi_\varepsilon$ which we study next.

\subsection{Mean value correction} \label{sec:meancorrect}

This problem is linked to the fact that in the shadow limit equation \eqref{shadow} only the mean value of $g$ resp. $v^0$ appears. We will use the mean value correction to compare both solutions, the diffusing solution and its shadow limit. In \cite{AMCAM:2015A}, it is distinguished between initial time layer and a mean value correction for $g$, but -- since we are interested in long-time behavior -- we combine them. In the remainder, we use the mean value correction $\psi_\varepsilon$ which solves 
\begin{align}
\begin{split} \label{Initlay} 
        \frac{\partial \psi_\varepsilon}{\partial t} - \frac{1}{\varepsilon}\Delta \psi_\varepsilon &=   g ( \mathbf{u}, v , x,t) - \langle g(\mathbf{u}, v, \cdot, t) \rangle_\Omega \quad \mbox{in} \quad \Omega\times (0, \infty), \qquad \frac{\partial \psi_\varepsilon}{\partial \nu} =0 \quad \mbox{on} \quad \partial \Omega \times (0,\infty),\\
 \psi_\varepsilon (\cdot, 0) & =v^0 - \langle v^0 \rangle_\Omega \quad \text{in} \quad \Omega, 
 \end{split}
\end{align} 
where $(\mathbf{u}, v)$ is the given shadow limit. By Assumption \ref{AssumptionB}, this correction term $\psi_\varepsilon$ can be estimated in terms of the small parameter $\varepsilon>0$. 

  \begin{lemma} \label{decayestmean}
Let Assumptions \ref{AssumptionN}--\ref{AssumptionB} be satisfied. Let $\lambda_1$ denote the first positive eigenvalue of $-\Delta$ endowed with zero Neumann boundary conditions on $L^2(\Omega)$. Then the mean value correction $\psi_\varepsilon$ satisfies the estimate 
\begin{equation}
\| \psi_\varepsilon(\cdot, t) \|_{L^{\infty}(\Omega)}  \le C_{v^0} \mathrm{e}^{-\lambda_1  t/\varepsilon}  + C_g \varepsilon \qquad \forall \; \varepsilon>0, t \in \mathbb{R}_{\ge 0} \label{indecay2}
\end{equation}
for some constants $C_{v^0}, C_g >0$ that do not depend on $t$ or $\varepsilon$, but on bounds of $v^0$ resp. $g$.
\end{lemma}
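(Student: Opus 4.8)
The plan is to exploit the Duhamel representation of $\psi_\varepsilon$ together with the fact that both the source term and the initial datum in \eqref{Initlay} have vanishing spatial mean. Writing the mild solution of \eqref{Initlay} as
\[
\psi_\varepsilon(\cdot, t) = S_\Delta(t/\varepsilon)\big(v^0 - \langle v^0\rangle_\Omega\big) + \int_0^t S_\Delta\!\big((t-s)/\varepsilon\big)\Big( g(\mathbf{u}(\cdot,s), v(s), \cdot, s) - \langle g(\mathbf{u}, v, \cdot, s)\rangle_\Omega \Big)\, \mathrm{d}s,
\]
I would first observe that the Neumann heat semigroup conserves spatial averages, so that, since $\langle v^0 - \langle v^0\rangle_\Omega\rangle_\Omega = 0$ and the integrand has zero mean for every $s$, each term above lies in the zero-mean subspace of $L^\infty(\Omega)$. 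This is precisely the subspace on which $(S_\Delta(\tau))_\tau$ decays exponentially.

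The key ingredient, and the main technical point, is the $L^\infty$ decay estimate
\[
\|S_\Delta(\tau) w\|_{L^\infty(\Omega)} \le C\, \mathrm{e}^{-\lambda_1 \tau}\, \|w\|_{L^\infty(\Omega)} \qquad \text{for all } \tau \ge 0,\; w \in L^\infty(\Omega),\; \langle w\rangle_\Omega = 0,
\]
which I would take from the heat-semigroup estimates collected in the Appendix. On the zero-mean subspace the spectral gap of $-\Delta$ with Neumann conditions gives the $L^2$ decay $\|S_\Delta(\tau)w\|_{L^2}\le \mathrm{e}^{-\lambda_1\tau}\|w\|_{L^2}$; the $L^\infty$ bound then follows by combining contractivity for $\tau\le 1$ with the chain $L^\infty \hookrightarrow L^2 \to L^2 \to L^\infty$ for $\tau\ge 1$, where the first and last steps use the continuous embedding on the bounded domain $\Omega$ and the $L^2$--$L^\infty$ smoothing of $S_\Delta$ over a fixed time, and the middle step uses the exponential $L^2$ decay. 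One must keep in mind that $(S_\Delta(\tau))_\tau$ is only a non-strongly-continuous semigroup on $L^\infty(\Omega)$, so the argument should be phrased purely in terms of these norm estimates rather than continuity.

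Granting this decay estimate, the two remaining steps are routine. For the initial-layer term, applying the estimate to $w = v^0 - \langle v^0\rangle_\Omega$ with $\tau = t/\varepsilon$ produces $C\mathrm{e}^{-\lambda_1 t/\varepsilon}\|v^0-\langle v^0\rangle_\Omega\|_{L^\infty(\Omega)}$, which is the first term with $C_{v^0} = 2C\|v^0\|_{L^\infty(\Omega)}$. For the Duhamel integral, Assumption \ref{AssumptionB} bounds $g$ evaluated at the shadow solution uniformly in $(x,s)$, hence the integrand is bounded in $L^\infty(\Omega)$ by a constant $C_g'$ independent of $s$ and $\varepsilon$; pulling out the decay factor and integrating gives
\[
\left\| \int_0^t S_\Delta\!\big((t-s)/\varepsilon\big)\big(g - \langle g\rangle_\Omega\big)\, \mathrm{d}s \right\|_{L^\infty(\Omega)} \le C C_g' \int_0^t \mathrm{e}^{-\lambda_1 (t-s)/\varepsilon}\, \mathrm{d}s \le \frac{C C_g'}{\lambda_1}\,\varepsilon,
\]
since $\int_0^t \mathrm{e}^{-\lambda_1(t-s)/\varepsilon}\,\mathrm{d}s = \tfrac{\varepsilon}{\lambda_1}\big(1-\mathrm{e}^{-\lambda_1 t/\varepsilon}\big) \le \varepsilon/\lambda_1$. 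Setting $C_g = CC_g'/\lambda_1$ and adding the two contributions yields \eqref{indecay2}. The only genuine obstacle is establishing the uniform-in-$\tau$ exponential $L^\infty$ decay on zero-mean data; once that Appendix estimate is in hand, the bound follows from the triangle inequality and an elementary integration.
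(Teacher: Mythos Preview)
Your proof is correct and follows essentially the same approach as the paper: Duhamel representation, the observation that both terms lie in the zero-mean subspace, application of the $L^\infty$ exponential decay estimate for $S_\Delta$ on zero-mean data (the paper's Lemma~\ref{Winterlemma}), and the elementary integration $\int_0^t \mathrm{e}^{-\lambda_1(t-s)/\varepsilon}\,\mathrm{d}s \le \varepsilon/\lambda_1$. The paper simply cites its Appendix lemma for the semigroup decay rather than sketching its proof as you do, but the argument is otherwise identical.
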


\begin{proof}
 The well-known Duhamel formula reads
\[
\psi_\varepsilon (\cdot, t) = S_\Delta(t/\varepsilon) (v^0-\langle v^0 \rangle_\Omega) + \int_0^t S_\Delta((t-s)/\varepsilon) \Big\{g ( \mathbf{u}(\cdot, s), v(s) , \cdot,s) - \langle g(\mathbf{u}(\cdot,s), v(s), \cdot, s) \rangle_\Omega \Big\} \; \mathrm{d}s.
\]
The mean value correction $\psi_\varepsilon$ fulfills $\langle \psi_\varepsilon(\cdot, t) \rangle_\Omega = 0$ for all $t \in \mathbb{R}_{\ge 0}$ such that the exponential decay estimate from Lemma \ref{Winterlemma} implies the estimate 
\begin{align*}
\| \psi_\varepsilon(\cdot, t) \|_{L^{\infty}(\Omega)} & \le \overline{C} \mathrm{e}^{-\lambda_1  t/\varepsilon} \| v^0-\langle v^0 \rangle_\Omega\|_{L^{\infty}(\Omega)} \\
& \quad + \overline{C}\int_0^t  \mathrm{e}^{-\lambda_1  (t-s)/\varepsilon} \| g ( \mathbf{u}(\cdot, s), v(s) , \cdot,s) - \langle g(\mathbf{u}(\cdot,s), v(s), \cdot, s) \rangle_\Omega \|_{L^\infty(\Omega)} \; \mathrm{d}s.
\end{align*}
Finally, by Assumption \ref{AssumptionB}, the right-hand side is bounded %at least for finite times,
 and integration yields the desired result \eqref{indecay2}.
\end{proof}

 In view of Lemma \ref{decayestmean}, the mean value correction $\psi_\varepsilon$ is of order $\varepsilon$ for times $t \ge T(\varepsilon)$ where 
\[
T(\varepsilon) := \max \left\{0, \frac{-\varepsilon \log \left(\|v^0-\langle v^0 \rangle_\Omega\|_{L^\infty(\Omega)} \varepsilon \right)}{\lambda_1} \right\}.
\] 
Note that $\lim_{\varepsilon \to 0} T(\varepsilon) =0$. This property allows to compare directly $v_\varepsilon$ and $v$ for large times if estimates as in Theorem \ref{TheoremToy} or Theorem \ref{Theorem} are valid. Moreover, these results imply that the component $v_\varepsilon$ becomes almost spatially homogeneous as time grows, compare the next result to \cite[Theorem 3]{Kondo}.

\begin{corollary} \label{Homogencor}
Let $(\mathbf{u}_\varepsilon, \mathbf{v_\varepsilon})$ and $(\mathbf{u}, v)$ be uniformly bounded, globally defined diffusing solutions of system \eqref{fullsys} and a corresponding shadow solution of system \eqref{shadow}, respectively. If $g-\langle g \rangle_\Omega$ evaluated at the solutions $(\mathbf{u}_\varepsilon, \mathbf{v_\varepsilon})$ and $(\mathbf{u}, v)$ is uniformly bounded in $\overline{\Omega} \times \mathbb{R}_{\ge 0}$, the error $V_\varepsilon = v_\varepsilon-v-\psi_\varepsilon$ satisfies the uniform estimate
\begin{equation}
\|V_\varepsilon-\langle V_\varepsilon \rangle_\Omega \|_{L^\infty(\Omega \times \mathbb{R}_{\ge 0})} \le C\varepsilon \label{VDhomogen}
\end{equation}
for some constant $C>0$ independent of diffusion. 
\end{corollary}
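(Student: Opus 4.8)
The plan is to derive a closed evolution equation for the error $V_\varepsilon = v_\varepsilon - v - \psi_\varepsilon$ and then exploit the mean-preserving property of the Neumann heat semigroup together with the exponential decay of its mean-free part. First I would compute the equation satisfied by $V_\varepsilon$. Since $v$ solves the spatially homogeneous ODE in \eqref{shadow}, we have $\Delta v = 0$, and subtracting the equations for $v_\varepsilon$, $v$, and $\psi_\varepsilon$ (the latter given by \eqref{Initlay}) makes the forcing $\langle g(\mathbf{u},v,\cdot,t)\rangle_\Omega$ cancel, leaving
\[
\frac{\partial V_\varepsilon}{\partial t} - \frac{1}{\varepsilon}\Delta V_\varepsilon = g(\mathbf{u}_\varepsilon, v_\varepsilon, x, t) - g(\mathbf{u}, v, x, t) =: R_\varepsilon(\cdot, t) \quad \text{in } \Omega\times\mathbb{R}_{>0},
\]
with zero-flux boundary condition and, crucially, vanishing initial data $V_\varepsilon(\cdot,0) = (v^0 - \langle v^0\rangle_\Omega) - (v^0 - \langle v^0\rangle_\Omega) = 0$. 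This is precisely the second equation of \eqref{errorsys}; the initial time layer has been entirely absorbed into $\psi_\varepsilon$, which is the reason one can hope for a bound that is uniformly $O(\varepsilon)$ rather than one deteriorating near $t=0$.

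Next I would write the Duhamel representation
\[
V_\varepsilon(\cdot,t) = \int_0^t S_\Delta((t-s)/\varepsilon)\, R_\varepsilon(\cdot,s)\,\mathrm{d}s
\]
and pass to the mean-free part. Since the Neumann heat semigroup preserves spatial means, $\langle S_\Delta(\tau) w\rangle_\Omega = \langle w\rangle_\Omega$, subtracting the mean commutes with the semigroup, so
\[
V_\varepsilon(\cdot,t) - \langle V_\varepsilon(\cdot,t)\rangle_\Omega = \int_0^t S_\Delta((t-s)/\varepsilon)\,\bigl(R_\varepsilon(\cdot,s) - \langle R_\varepsilon(\cdot,s)\rangle_\Omega\bigr)\,\mathrm{d}s.
\]
Because the integrand is mean-free, the exponential decay estimate of Lemma \ref{Winterlemma} applies to each factor, bounding it by $\overline{C}\,\mathrm{e}^{-\lambda_1(t-s)/\varepsilon}\|R_\varepsilon(\cdot,s) - \langle R_\varepsilon(\cdot,s)\rangle_\Omega\|_{L^\infty(\Omega)}$.

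It then remains to bound the mean-free source uniformly in time. Writing $R_\varepsilon - \langle R_\varepsilon\rangle_\Omega = \bigl(g(\mathbf{u}_\varepsilon,v_\varepsilon,\cdot,t) - \langle g(\mathbf{u}_\varepsilon,v_\varepsilon,\cdot,t)\rangle_\Omega\bigr) - \bigl(g(\mathbf{u},v,\cdot,t) - \langle g(\mathbf{u},v,\cdot,t)\rangle_\Omega\bigr)$, the hypothesis that $g - \langle g\rangle_\Omega$ evaluated at both solutions is uniformly bounded on $\overline{\Omega}\times\mathbb{R}_{\ge 0}$ gives $\|R_\varepsilon(\cdot,s) - \langle R_\varepsilon(\cdot,s)\rangle_\Omega\|_{L^\infty(\Omega)} \le M$ with $M$ independent of $s$ and $\varepsilon$. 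Substituting and integrating the exponential,
\[
\|V_\varepsilon(\cdot,t) - \langle V_\varepsilon(\cdot,t)\rangle_\Omega\|_{L^\infty(\Omega)} \le \overline{C} M \int_0^t \mathrm{e}^{-\lambda_1(t-s)/\varepsilon}\,\mathrm{d}s \le \frac{\overline{C} M}{\lambda_1}\,\varepsilon,
\]
uniformly in $t \ge 0$, which is the claimed estimate \eqref{VDhomogen} with $C = \overline{C}M/\lambda_1$.

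A brief word on what matters: the argument uses neither the stability Assumption \ref{AssumptionL1p} nor \ref{AssumptionL0}, since the oscillatory part of $V_\varepsilon$ is slaved to the strong diffusion alone; the only inputs are the uniform boundedness of the two solutions (so that $R_\varepsilon$ is controlled) and the uniform boundedness of the mean-free part of $g$. The one structural point to get right is the cancellation of forcing and initial data that produces the clean problem with zero initial condition, and the correct invocation of the mean-preservation and decay properties of $S_\Delta$ via Lemma \ref{Winterlemma}. I do not anticipate a genuine obstacle: the result is essentially a quantitative restatement of the fact that, under strong diffusion with Neumann conditions, a uniformly bounded mean-free source can sustain only an $O(\varepsilon)$ mean-free response.
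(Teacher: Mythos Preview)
Your proof is correct and follows essentially the same approach as the paper: derive the equation for $V_\varepsilon - \langle V_\varepsilon\rangle_\Omega$ with zero initial data and mean-free right-hand side $R_\varepsilon - \langle R_\varepsilon\rangle_\Omega$, then apply the exponential decay estimate of Lemma~\ref{Winterlemma} together with the assumed uniform bound on $g - \langle g\rangle_\Omega$. The paper states this more tersely by writing the equation for $W_\varepsilon := V_\varepsilon - \langle V_\varepsilon\rangle_\Omega$ directly and referring to ``the same arguments as in Lemma~\ref{decayestmean}'', but the substance is identical.
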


\begin{proof}
The difference $W_\varepsilon:= V_\varepsilon-\langle V_\varepsilon \rangle_\Omega$ solves 
\begin{align*}
\frac{\partial W_\varepsilon }{\partial t} - \frac{1}{\varepsilon}\Delta W_\varepsilon & =g(\mathbf{u}_\varepsilon, v_\varepsilon, x,t)-g(\mathbf{u},v,x,t)  -  \langle g(\mathbf{u}_\varepsilon, v_\varepsilon, \cdot ,t)-g(\mathbf{u},v,\cdot ,t) \rangle_\Omega  \quad \mbox{in} \quad  \Omega \times \mathbb{R}_{>0}
\end{align*} 
endowed with homogeneous zero flux boundary and zero initial conditions. Uniform boundedness of the right-hand side in the latter equation yields estimate \eqref{VDhomogen} by the same arguments as in Lemma \ref{decayestmean}. 
\end{proof}

\section{Convergence results}\label{sec:proofs}

Let $(\mathbf{u}_\varepsilon,  v_\varepsilon)$ and $(\mathbf{u}, v)$ be solutions of system \eqref{fullsys} and the corresponding shadow limit \eqref{shadow} satisfying Assumptions \ref{AssumptionN}--\ref{AssumptionB}. We consider the error functions
\begin{equation}\label{Error}
  \mathbf{U}_\varepsilon = \mathbf{u}_\varepsilon - \mathbf{u} \qquad \mbox{and} \qquad V_\varepsilon = v_\varepsilon - v - \psi_\varepsilon,
\end{equation}
where we took into account the mean value correction $\psi_\varepsilon$ defined by problem \eqref{Initlay}. Our goal is to estimate the error functions in terms of the small parameter $\varepsilon$. In Section \ref{sec:truncation}, we linearize the semilinear problem \eqref{fullsys} at a given shadow limit $(\mathbf{u},v)$ and follow the idea of truncation of the system of errors \eqref{errorsys}, as in the case of short-time intervals \cite{AMCAM:2015A}. This approach combined with stability conditions \ref{AssumptionL1p}--\ref{AssumptionL0} allows deducing the main results Theorem \ref{TheoremToy} and Theorem \ref{Theorem} in Section \ref{sec:errorestimates}.

\subsection{Truncation of errors}  \label{sec:truncation}

To assure boundedness of the model variables in the subsequent analysis, we localize the problem by introducing certain cut-off functions. This is similar to localization procedures in \cite{Yagi} or classical center manifold theory in \cite{Carr, Haragus} and allows estimating the solution. Let us start from system \eqref{errorsys} for the errors $\mathbf{U}_\varepsilon, V_\varepsilon$, i.e.,
\begin{align*} 
\begin{split}
\frac{\partial \mathbf{U}_\varepsilon }{\partial t}  - \mathbf{D} \Delta  \mathbf{U}_{\varepsilon}  & = \mathbf{f} (\mathbf{u}_\varepsilon,  v_\varepsilon, x, t)- \mathbf{f} (\mathbf{u}, v, x, t) \quad  \mbox{in} \quad  \Omega_T, \qquad \mathbf{U}_\varepsilon (\cdot, 0)  = \mathbf{0} \quad  \mbox{in} \quad  \Omega,  \\
\frac{\partial V_\varepsilon}{\partial t} -\frac{1}{\varepsilon} \Delta V_\varepsilon & = g(\mathbf{u}_\varepsilon, v_\varepsilon, x,t)-g(\mathbf{u},v,x,t) \quad \mbox{in} \quad  \Omega_T, \qquad  \; V_\varepsilon (\cdot,0) = 0 \quad  \mbox{in} \quad  \Omega,\\
 \frac{\partial \mathbf{U}_{\varepsilon}}{\partial \nu} & =\mathbf{0},  \quad \frac{\partial V_\varepsilon }{\partial \nu}  = 0 \; \; \mbox{on} \; \;  \partial \Omega \times (0,T).
  \end{split}
\end{align*}
 In this section, we introduce a cut-off in system \eqref{errorsys} in such a way that, in a small neighborhood of the origin, the cut-off does not affect the nonlinearities. % if one restricts to the trajectory of the solution starting in $\mathbf{0}$.
 Let $(\bm{\alpha}_{\varepsilon}, \beta_\varepsilon)$ be the solution of the truncated system of errors \eqref{truncsys} which we specify below on page \pageref{truncsys2}. Reducing on stability properties of the linearized shadow system, % allows deducing an error estimate of the truncated error functions. 
we will show in Proposition \ref{nonerrorest} and \ref{LinftynD} that the truncated solution $(\bm{\alpha}_{\varepsilon}, \beta_\varepsilon)$ is located within the small neighborhood of $\mathbf{0}$ with radius $\varepsilon^{\delta_0}$ for sufficiently large diffusion $1/\varepsilon$ and a certain $\delta_0>0$. The smallness of the truncated solution can be verified on a long-time interval which usually depends on the diffusion parameter. As solutions to problem \eqref{errorsys} are unique, this implies $(\bm{\alpha}_{\varepsilon}, \beta_\varepsilon) = (\mathbf{U}_\varepsilon, {V}_\varepsilon)$ in this small neighborhood of the origin, and we regain estimates for the original error $(\mathbf{U}_\varepsilon, {V}_\varepsilon)$ in Section \ref{sec:errorestimates}. \\

We start with a linearization of the above system of errors at the bounded shadow limit $(\mathbf{u}, v)$. Using Taylor's expansion, we write for $\mathbf{h}=(\mathbf{f}, g)$
\begin{align}
\mathbf{h} (\mathbf{u}_\varepsilon,  v_\varepsilon, x, t)- \mathbf{h} (\mathbf{u}, v, x, t)
& =  \nabla_\mathbf{u} \mathbf{h} (\mathbf{u}, v, x, t) \mathbf{U}_\varepsilon +
 \partial_v \mathbf{h} (\mathbf{u}, v , x, t) (V_\varepsilon+  \psi_\varepsilon )+ \mathbf{H} (\mathbf{U}_\varepsilon , V_\varepsilon +  \psi_\varepsilon ,x,t),\label{Taylor1}
\end{align}
where the remainder $\mathbf{H}=(\mathbf{F}, G)$ is given due to the mean value theorem by
\begin{align*}
\mathbf{H}(\mathbf{y}, z + \psi_\varepsilon,x,t) =  \int_0^1 \nabla \mathbf{h} (\mathbf{u} + \vartheta \mathbf{y}, v + \vartheta (z+ \psi_\varepsilon),x,t)- \nabla \mathbf{h} (\mathbf{u}, v,x,t) \; \mathrm{d}\vartheta \cdot \begin{pmatrix}
\mathbf{y}\\
z+ \psi_\varepsilon
\end{pmatrix}. %\label{Taylor2}
\end{align*}

Following the idea of truncation from \cite{AMCAM:2015A}, see also \cite{Carr, Haragus, Yagi}, we construct a suitable cut-off for the possibly unbounded right-hand side $\mathbf{H}= (\mathbf{F},G)$. We first modify the $\mathbf{y}$-component using a cut-off function $\Theta \in C^{0,1}(\mathbb{R})$ defined by
\begin{equation}\label{Ucut}
  \Theta (z) = \begin{cases}
\mathrm{sgn}(z) \cdot   \varepsilon^{\delta_0} & \text{for} \; |z| > \varepsilon^{\delta_0},\\
z & \text{for} \; |z| \leq \varepsilon^{\delta_0}
  \end{cases}
\end{equation}
for $\delta_0 \in (0, 1/2]$. The function $\Theta$ satisfies $|\Theta(z)| \le  \min \{\varepsilon^{\delta_0}, |z|\}$. For simplicity, in the vector-valued case, we write $\bm{\Theta}(\mathbf{y}) := (\Theta(y_1), \ldots, \Theta(y_m))$.
%\begin{remark}
Note that the choice of $\delta_0$ is restricted to the interval $(0, 1/2]$. This restriction is linked to the long-time error estimates in Proposition \ref{LinftynD} in which the convergence rate is weaker than in case of finite time analysis in \cite{AMCAM:2015A}.\\
%\end{remark}
We further consider the function $\overline{\mathbf{H}}_\varepsilon=(\overline{\mathbf{F}}_\varepsilon, \overline{G}_\varepsilon)$ given by $ \overline{\mathbf{H}}_\varepsilon (\mathbf{y},z, x, t)  :=  \mathbf{H}(\bm{\Theta}(\mathbf{y}), z + \psi_\varepsilon,x,t)$. Recall from Lemma \ref{decayestmean} that $\psi_\varepsilon$ is uniformly bounded in time. To control the $z$-component of the truncation, we use a symmetric cut-off function $\rho \in C_c^{\infty}(\mathbb{R}; [0,1])$ defined by
\begin{equation}\label{Tronc1}
  \rho (z ) = \begin{cases}
                      1 & \hbox{for} \; |z |\leq L, \\
                      0 & \hbox{for} \; |z|\geq 2L.
                    \end{cases}
\end{equation}
Here, $L:= C_{v^0} + 2$, where $C_{v^0}>0$ is the same constant from the time decay estimate \eqref{indecay2} of $\psi_\varepsilon$. Such a choice of $\rho$ is possible by mollifying the characteristic function $\mathds{1}_{(-r,r)}$ where $L<r< 2L$ \cite[Theorem 2.29]{Adams}. We define a further truncation by
\begin{align}\label{Tronc2}
  \mathbf{H}_\varepsilon (\mathbf{y},z, x, t) & := \overline{\mathbf{H}}_\varepsilon( \mathbf{y}, z,x,t) \cdot \left[  \rho \left(\frac{\varepsilon^{-\delta_0} |\overline{z}| }{L}\right)  \left(1- \rho \left( \frac{2\lambda_1t}{-\varepsilon \log \varepsilon} \right) \right) +\rho \left(\frac{|\overline{z}|}{L}\right)  \rho\left( \frac{2\lambda_1 t}{-\varepsilon \log \varepsilon} \right) \right]
\end{align}
for $\overline{z} := z + \psi_\varepsilon$. Recall that $\lambda_1$ is the first positive eigenvalue of $-\Delta$ endowed with zero flux boundary conditions on $L^2(\Omega)$, see problem \eqref{spect}. A similar modification is done in \cite[Lemma 2]{AMCAM:2015A}. Properties of the truncated function $\mathbf{H}_\varepsilon=(\mathbf{F}_\varepsilon, G_\varepsilon)$ are given next.

\begin{lemma} \label{quadcut} Let Assumptions \ref{AssumptionN}--\ref{AssumptionB} be satisfied. For each $\delta_0 \in (0,1/2]$ there is a constant $C>0$, independent of $\varepsilon \le 1$ but which depends on $L$ defined in \eqref{Tronc1}, such that for $\overline{z}= z + \psi_\varepsilon$ we have
\begin{equation}\label{Tronc3}
  |\mathbf{F}_\varepsilon (\mathbf{y}, z, x, t) |,   |G_\varepsilon (\mathbf{y}, z, x, t) |\leq C \Big(  \varepsilon^{2\delta_0}  + { \mathds{1}}_{\{  t\leq -\varepsilon \log \varepsilon/ \lambda_1 \}}(t) \cdot \min\{1,|\overline{z}|\} \Big)
\end{equation} 
for all  $(\mathbf{y}, z) \in \mathbb{R}^{m+1}, t \in \mathbb{R}_{\ge 0}$ and for a.e. $x\in \Omega$. Here, ${\mathds{1}}_{\{t \le c\}}$ is the characteristic function on the time interval $[0,c]$ and $\lambda_1$ is the first positive eigenvalue of $-\Delta$ endowed with zero Neumann boundary conditions on $L^2(\Omega)$.
\end{lemma}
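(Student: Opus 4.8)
The plan is to bound the truncated remainder $\mathbf{H}_\varepsilon=(\mathbf{F}_\varepsilon,G_\varepsilon)$ from \eqref{Tronc2} by writing its scalar bracket as a sum $T_1+T_2$ of its two nonnegative pieces, so that $|\mathbf{H}_\varepsilon|\le|\overline{\mathbf{H}}_\varepsilon|\,T_1+|\overline{\mathbf{H}}_\varepsilon|\,T_2$, and estimating each product on its own support. The common ingredient is a quadratic bound on the unmodified Taylor remainder: from the mean-value representation of $\mathbf{H}$ and the local Lipschitz continuity of $\nabla\mathbf{h}$ (Assumption \ref{AssumptionN}), for arguments confined to a bounded set $B$ one has $|\mathbf{H}(\mathbf{a},b,x,t)|\le\tfrac{1}{2}L(B)\,(|\mathbf{a}|+|b|)^2$. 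Specializing to $\mathbf{a}=\bm{\Theta}(\mathbf{y})$, $b=\overline z=z+\psi_\varepsilon$ and using $|\bm{\Theta}(\mathbf{y})|\le\sqrt m\,\varepsilon^{\delta_0}\le\sqrt m$ from \eqref{Ucut}, the uniform bound on $(\mathbf{u},v)$ (Assumption \ref{AssumptionB}) and the uniform-in-time bound on $\psi_\varepsilon$ (Lemma \ref{decayestmean}), I would fix once and for all the bounded set
\[
B=\{(\mathbf{p},q,x,t):|\mathbf{p}|\le\|\mathbf{u}\|_\infty+\sqrt m,\ |q|\le\|v\|_\infty+2L^2,\ x\in\overline\Omega,\ t\ge0\},
\]
whose associated Lipschitz constant $L_0:=L(B)$ is independent of $\varepsilon\le1$. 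This $B$ contains the whole segment $\vartheta\mapsto(\mathbf{u}+\vartheta\bm{\Theta}(\mathbf{y}),v+\vartheta\overline z)$ exactly on $\{|\overline z|\le2L^2\}$, which is the only region where either $\rho(|\overline z|/L)$ or $\rho(\varepsilon^{-\delta_0}|\overline z|/L)$ is nonzero; hence on that region
\[
|\overline{\mathbf{H}}_\varepsilon(\mathbf{y},z,x,t)|\le\tfrac{L_0}{2}\big(\sqrt m\,\varepsilon^{\delta_0}+|\overline z|\big)^2 .
\]

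For the first piece $T_1=\rho(\varepsilon^{-\delta_0}|\overline z|/L)\,(1-\rho_t)$, with $\rho_t:=\rho(2\lambda_1t/(-\varepsilon\log\varepsilon))$, its spatial factor is nonzero only when $\varepsilon^{-\delta_0}|\overline z|/L\le2L$, i.e. $|\overline z|\le2L^2\varepsilon^{\delta_0}$. Inserting this into the quadratic bound and using $0\le T_1\le1$ gives
\[
|\overline{\mathbf{H}}_\varepsilon|\,T_1\le\tfrac{L_0}{2}\big(\sqrt m+2L^2\big)^2\varepsilon^{2\delta_0},
\]
uniformly in $t$, which supplies the $\varepsilon^{2\delta_0}$ term of \eqref{Tronc3}.

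For the second piece $T_2=\rho(|\overline z|/L)\,\rho_t$, its spatial factor forces $|\overline z|\le2L^2$ so the quadratic bound is available, while its time factor $\rho_t$ vanishes once $t\ge -L\varepsilon\log\varepsilon/\lambda_1$, confining $T_2$ to an initial-layer window of the type appearing in \eqref{Tronc3}. To turn the quadratic bound into the linear $\min\{1,|\overline z|\}$ form I would use, on $\{|\overline z|\le2L^2\}$ and with $\varepsilon^{\delta_0}\le1$, the elementary inequalities $\big(\sqrt m\,\varepsilon^{\delta_0}+|\overline z|\big)^2\le m\,\varepsilon^{2\delta_0}+(2\sqrt m+2L^2)\,|\overline z|$ and $|\overline z|\le2L^2\min\{1,|\overline z|\}$. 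Together with $0\le T_2\le1$ this yields
\[
|\overline{\mathbf{H}}_\varepsilon|\,T_2\le C\big(\varepsilon^{2\delta_0}+\mathds{1}_{\{t< -L\varepsilon\log\varepsilon/\lambda_1\}}(t)\,\min\{1,|\overline z|\}\big),
\]
and summing the two products gives \eqref{Tronc3} for both $\mathbf{F}_\varepsilon$ and $G_\varepsilon$ at once, since the same scalar bracket multiplies every component of $\overline{\mathbf{H}}_\varepsilon$. All fixed constants are absorbed into a single $C=C(L,L_0,m)$ independent of $\varepsilon\le1$.

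I expect the main obstacle to be bookkeeping rather than analysis. One must verify that $L_0$ is genuinely $\varepsilon$-independent, which relies on the cut-off region $\{|\overline z|\le2L^2\}$ being a fixed set and on the uniform bound for $\psi_\varepsilon$ from Lemma \ref{decayestmean}, so that $\overline z=z+\psi_\varepsilon$ ranges over a fixed set whenever $z$ does. The one point needing care is matching the time-support of $T_2$ to the characteristic function in \eqref{Tronc3}: $\rho_t$ is supported in $\{t<-L\varepsilon\log\varepsilon/\lambda_1\}$, so this window and the stated one $\{t\le -\varepsilon\log\varepsilon/\lambda_1\}$ coincide only up to the fixed factor $L=C_{v^0}+2$. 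Reconciling them amounts to the harmless normalization of the time cut-off and does not affect the orders in $\varepsilon$ that the subsequent truncation argument relies on.
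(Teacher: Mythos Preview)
Your approach mirrors the paper's almost exactly: split the bracket in \eqref{Tronc2} into its two pieces, invoke the quadratic Taylor bound $|\overline{\mathbf{H}}_\varepsilon|\le C|(\bm{\Theta}(\mathbf{y}),\overline z)|^2$ on the common support, and read off the $\varepsilon^{2\delta_0}$ contribution from the first piece and the linearly-bounded $|\overline z|$ contribution from the second. Your closing remark about the time-support is well taken --- the factor $\rho\big(\tfrac{2\lambda_1 t}{-\varepsilon\log\varepsilon}\big)$ is indeed supported on $\{t<-L\varepsilon\log\varepsilon/\lambda_1\}$ rather than $\{t\le-\varepsilon\log\varepsilon/\lambda_1\}$, so the indicator in \eqref{Tronc3} is off by the fixed factor $L$; the paper glosses over this in the same way, and as you note it is harmless for every later use of the lemma, which only needs the window to be $O(-\varepsilon\log\varepsilon)$.
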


\begin{proof} The first term in definition \eqref{Tronc2} can be estimated as $\overline{\mathbf{H}}_\varepsilon$. However, only $\varepsilon^{-\delta_0}|\overline{z}| \le 2L$ has to be considered due to $\rho \in C_c^{\infty}(\mathbb{R})$ defined in \eqref{Tronc1}. By Assumptions \ref{AssumptionN}--\ref{AssumptionB}, we infer the existence of a constant $C>0$ such that 
\begin{align*}
| \overline{\mathbf{H}}_\varepsilon (\mathbf{y} , z , \cdot, \cdot) | \le C |(\bm{\Theta}(\mathbf{y}), \overline{z})|^2.
\end{align*}
This constant $C$ depends on the time-independent bounds on $\mathbf{u}, v$ in Assumption \ref{AssumptionB}, on the one for $\psi_\varepsilon$ in estimate \eqref{indecay2}, and on Lipschitz bounds in Assumption \ref{AssumptionN} for the derivatives, but neither on diffusion nor on time $t$.  For the second term in definition \eqref{Tronc2}, we use the estimate
\[
\left|  \overline{\mathbf{H}}_\varepsilon( \mathbf{y}, z,x,t)  \rho \left(\frac{|\overline{z}|}{L}\right)  \rho \left( \frac{2\lambda_1 t}{-\varepsilon \log \varepsilon} \right) \right| \le C (\varepsilon^{2\delta_0} + |\overline{z}|^2) \rho \left(\frac{|\overline{z}|}{L}\right)   {\mathds{1}}_{\{  t\leq -\varepsilon \log \varepsilon/ \lambda_1 \}}(t)
\]
which results from the estimate for $\overline{\mathbf{H}}_\varepsilon$. The right-hand side is at most non-zero if $|\overline{z}| \le 2L$ and the quadratic term is (linearly) bounded.
\end{proof}

In the following, we study the localized problem using the truncation $\mathbf{H}_\varepsilon$ associated with the error system \eqref{errorsys}. Substituting $\mathbf{H}_\varepsilon$ for $\mathbf{H}$ in equation \eqref{Taylor1} and \eqref{errorsys} yields system \eqref{truncsys}, i.e., \label{truncsys2}
\begin{align*} 
\begin{split}
\frac{\partial \bm{\alpha}_{\varepsilon} }{\partial t} - \mathbf{D} \Delta  \bm{\alpha}_{\varepsilon}   & =\nabla_\mathbf{u} \mathbf{f} \cdot \bm{\alpha}_{\varepsilon} +
 \nabla_v \mathbf{f} \cdot (\beta_\varepsilon +  \psi_\varepsilon )  + \mathbf{F}_\varepsilon (\bm{\alpha}_{\varepsilon} , \beta_\varepsilon,x,t) \quad \mbox{in} \quad \Omega_T,  \\
\frac{\partial \beta_\varepsilon}{\partial t} - \frac{1}{\varepsilon} \Delta \beta_\varepsilon & =  \nabla_\mathbf{u} g \cdot  \bm{\alpha}_{\varepsilon} +
 \nabla_v g  \cdot (\beta_\varepsilon+ \psi_\varepsilon) + G_\varepsilon (\bm{\alpha}_{\varepsilon}, \beta_\varepsilon, x,t)  \quad  \mbox{in} \quad  \Omega_T,  \\
 \bm{\alpha}_{\varepsilon} (\cdot, 0)  & = \mathbf{0},  \quad \beta_\varepsilon (\cdot,0) = 0 \quad \text{in} \quad \Omega, \qquad   \frac{\partial \bm{\alpha}_{\varepsilon}}{\partial \nu}   =\mathbf{0},  \quad \frac{\partial \beta_\varepsilon }{\partial \nu} =0 \quad \mbox{on} \quad \partial \Omega \times (0,T).    
 \end{split}
 \end{align*}
Herein, we abbreviated the Jacobians $\nabla_\mathbf{u} \mathbf{f}, \nabla_v \mathbf{f}, \nabla_\mathbf{u} g, \nabla_v g$ which are evaluated at the shadow solution $(\mathbf{u}, v)$ and depend in general on space and time. Existence of a unique solution $(\bm{\alpha}_{\varepsilon}, \beta_\varepsilon)$ of system \eqref{truncsys} is deferred to Proposition \ref{coupledbeta}. Let us first describe the focal idea using cut-offs:

The cut-off is constructed in such a way that it has no effect within a small neighborhood of $\mathbf{0}$ with radius $\varepsilon^{\delta_0}$. Hence, we focus on finding estimates for the solution $(\bm{\alpha}_{\varepsilon}, \beta_\varepsilon)$ of the truncated problem \eqref{truncsys} to show that its solution is located within this small neighborhood for all large diffusions $1/\varepsilon$. This is motivated by the following sufficient condition for a removal of the truncation.

\begin{lemma} \label{Removetrunc}
Let Assumptions \ref{AssumptionN}--\ref{AssumptionB} hold, let $\delta_0 \in (0,1/2]$ be chosen as in the definition of the truncation \eqref{Tronc2} and let there exist an $\varepsilon_0>0$ and time $T>0$ such that the truncated error $(\bm{\alpha}_{\varepsilon}, \beta_\varepsilon)$ satisfies the estimate
\begin{equation}
\|\bm{\alpha}_{\varepsilon}\|_{L^\infty(\Omega_T)^m}  + \|\beta_\varepsilon\|_{L^\infty(\Omega_T)}\le \varepsilon^{\delta_0} \qquad \forall \; \varepsilon \le \varepsilon_0. \label{removetrunc}
\end{equation}
Then there exists some $\varepsilon_0 \ge \varepsilon_1>0$ such that $(\bm{\alpha}_{\varepsilon}, \beta_\varepsilon) =  (\mathbf{U}_\varepsilon, V_\varepsilon)$ on $\Omega_T$  for all $\varepsilon \le \varepsilon_1$ and the uniform error estimate \eqref{removetrunc} holds for the original error $(\mathbf{U}_\varepsilon, V_\varepsilon)$ on $\Omega_T$ for all $\varepsilon \le \varepsilon_1$.
\end{lemma}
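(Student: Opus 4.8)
The plan is to show that the truncation is inactive on all of $\Omega_T$ under hypothesis \eqref{removetrunc}, so that $(\bm{\alpha}_\varepsilon, \beta_\varepsilon)$ actually solves the untruncated error system \eqref{errorsys}; uniqueness of mild solutions then forces $(\bm{\alpha}_\varepsilon, \beta_\varepsilon) = (\mathbf{U}_\varepsilon, V_\varepsilon)$, and the claimed estimate for the original error is just \eqref{removetrunc} read off on the identified pair. The point is that both cut-offs in the definition \eqref{Tronc2} of $\mathbf{H}_\varepsilon$, namely the $\bm{\Theta}$-truncation of the $\mathbf{y}$-argument and the $\rho$-factors in $\overline{z} = z + \psi_\varepsilon$, reduce to the identity precisely when the arguments are small, so I need to verify that the bound \eqref{removetrunc} drives all arguments into the regime where no cut-off fires.

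First I would examine the $\bm{\Theta}$-truncation. By \eqref{Ucut}, $\Theta(y_i) = y_i$ whenever $|y_i| \le \varepsilon^{\delta_0}$, and the hypothesis \eqref{removetrunc} gives $\|\bm{\alpha}_\varepsilon\|_{L^\infty(\Omega_T)^m} \le \varepsilon^{\delta_0}$; hence componentwise $|\alpha_{\varepsilon,i}(x,t)| \le \varepsilon^{\delta_0}$ for a.e.\ $(x,t) \in \Omega_T$, so $\bm{\Theta}(\bm{\alpha}_\varepsilon) = \bm{\alpha}_\varepsilon$ there and the first truncation is inactive. Next I would control $\overline{z} = \beta_\varepsilon + \psi_\varepsilon$. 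From \eqref{removetrunc}, $\|\beta_\varepsilon\|_{L^\infty(\Omega_T)} \le \varepsilon^{\delta_0} \le 1$ for $\varepsilon$ small, and Lemma \ref{decayestmean} bounds $\psi_\varepsilon$ by $C_{v^0}\mathrm{e}^{-\lambda_1 t/\varepsilon} + C_g\varepsilon \le C_{v^0} + C_g\varepsilon$, so that $|\overline{z}| \le C_{v^0} + C_g\varepsilon + \varepsilon^{\delta_0} \le C_{v^0} + 2 = L$ once $\varepsilon \le \varepsilon_1$ is small enough that $C_g\varepsilon + \varepsilon^{\delta_0} \le 2$. Since $\rho \equiv 1$ on $[-L,L]$ by \eqref{Tronc1}, the factor $\rho(|\overline{z}|/L)$ equals $1$. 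For the factor $\rho(\varepsilon^{-\delta_0}|\overline{z}|/L)$ appearing in the first bracket of \eqref{Tronc2}, I would observe that this term is multiplied by $1-\rho(2\lambda_1 t/(-\varepsilon\log\varepsilon))$, which is supported in $\{t \ge -\varepsilon\log\varepsilon/(2\lambda_1)\}$; there the exponential term in Lemma \ref{decayestmean} satisfies $\mathrm{e}^{-\lambda_1 t/\varepsilon} \le \varepsilon^{1/2}$, giving $|\overline{z}| \le C_{v^0}\varepsilon^{1/2} + C_g\varepsilon + \varepsilon^{\delta_0}$. Combining the two time-regimes, the bracket in \eqref{Tronc2} equals $1$ throughout $\Omega_T$: on the early interval the second summand is active and equal to $1$, on the late interval the first summand is active and its $\rho$-factor is $1$ because $\varepsilon^{-\delta_0}|\overline{z}|/L$ stays within the plateau $[-1,1]$ of $\rho$ (this is where the restriction $\delta_0 \le 1/2$ is used, so that $\varepsilon^{-\delta_0}\cdot\varepsilon^{1/2} \to 0$).

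With both cut-offs shown to be inactive, $\mathbf{H}_\varepsilon(\bm{\alpha}_\varepsilon, \beta_\varepsilon, x, t) = \mathbf{H}(\bm{\alpha}_\varepsilon, \beta_\varepsilon + \psi_\varepsilon, x, t)$ pointwise on $\Omega_T$, so by the Taylor identity \eqref{Taylor1} the truncated system \eqref{truncsys} coincides with the original error system \eqref{errorsys} on $\Omega_T$. Both $(\bm{\alpha}_\varepsilon, \beta_\varepsilon)$ and $(\mathbf{U}_\varepsilon, V_\varepsilon)$ are then mild solutions of \eqref{errorsys} with the same zero initial data, and uniqueness of mild solutions (Proposition \ref{prop1}, applied to the error variables, together with the local Lipschitz structure from Assumption \ref{AssumptionN}) yields $(\bm{\alpha}_\varepsilon, \beta_\varepsilon) = (\mathbf{U}_\varepsilon, V_\varepsilon)$ on $\Omega_T$ for all $\varepsilon \le \varepsilon_1$. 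The estimate \eqref{removetrunc} for the original error is immediate from this identification.

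The main obstacle is the bookkeeping around the time-dependent cut-off factor $\rho(2\lambda_1 t/(-\varepsilon\log\varepsilon))$: one must check that on the late-time branch, where only the rescaled factor $\rho(\varepsilon^{-\delta_0}|\overline{z}|/L)$ could conceivably fire, the decay of the initial-layer term $\psi_\varepsilon$ is fast enough that $\varepsilon^{-\delta_0}|\overline{z}|$ still lies in the plateau of $\rho$. This is a genuine interplay between the truncation radius exponent $\delta_0$, the diffusive time-scale $-\varepsilon\log\varepsilon/\lambda_1$ on which $\psi_\varepsilon$ becomes $O(\varepsilon)$, and the hypothesis \eqref{removetrunc}; it is precisely the reason for restricting $\delta_0$ to $(0,1/2]$, and I would isolate this comparison as the crux of the argument while treating the spatial $\bm{\Theta}$- and $\rho$-truncations as routine.
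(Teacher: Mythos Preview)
Your argument is correct and follows essentially the same two-regime analysis as the paper: show $\bm{\Theta}$ is inactive by \eqref{removetrunc}, then split time according to the cut-off $\rho(2\lambda_1 t/(-\varepsilon\log\varepsilon))$ and verify that the bracket in \eqref{Tronc2} equals $1$ in each regime (the paper splits at $t^\ast = -\delta_0\varepsilon\log\varepsilon/\lambda_1$ so that $\mathrm{e}^{-\lambda_1 t/\varepsilon} \le \varepsilon^{\delta_0}$ directly on the late branch, which is slightly cleaner than your $\varepsilon^{1/2}$ bound, but the logic is identical). One small slip: the plateau of $\rho$ is $[-L,L]$, not $[-1,1]$; since your late-time bound actually gives $\varepsilon^{-\delta_0}|\overline{z}|/L \le 1 \le L$ anyway, this does not affect the conclusion.
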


\begin{proof}
 By definition \eqref{Ucut}, estimate \eqref{removetrunc} implies that $\bm{\Theta}$ is redundant in the definition of $\overline{\mathbf{H}}_\varepsilon$ and we obtain $\overline{\mathbf{H}}_\varepsilon = \mathbf{H}$ in $\Omega_T$. Next, we show  $\mathbf{H}_\varepsilon = \mathbf{H}$, i.e., the cut-off does not affect the right-hand side of the truncated problem \eqref{truncsys} if one restricts to the trajectory of the solution $(\bm{\alpha}_{\varepsilon}, \beta_\varepsilon)$. We deduce this by considering the two cases, above and below the critical time $t^\ast:= -\delta_0  \varepsilon \log \varepsilon/ \lambda_1$ for construction \eqref{Tronc2}. 
\begin{itemize}
\item If $t \le t^\ast$, we recall that $\rho(z) \equiv 1$ for all $|z| \le L$ and obtain $\rho\left( \frac{2\lambda_1 t}{-\varepsilon \log \varepsilon} \right) =1$ by $2\delta_0 \le L$. Thus, 
$
  \mathbf{H}_\varepsilon (\bm{\alpha}_{\varepsilon}, \beta_\varepsilon, x, t)  = \rho \left(|\overline{\beta_\varepsilon}|/L \right)  \mathbf{H}( \bm{\alpha}_{\varepsilon}, \beta_\varepsilon,x,t).
$
Additionally, there holds $\rho \left(|\overline{\beta_\varepsilon}|/L \right)  =1$ since by definition of $L=C_{v^0}+2$
\[
|\overline{\beta_\varepsilon}| \le |\beta_\varepsilon| + |\psi_\varepsilon | \le  \varepsilon^{\delta_0} + C_g \varepsilon + (L-2)\mathrm{e}^{-\lambda_1 t/\varepsilon}  \le L
\]
for all $\varepsilon \le \varepsilon_0$ small enough.

\item If $t>t^\ast$, then $\mathrm{e}^{-\lambda_1 t/\varepsilon} \le \varepsilon^{\delta_0}$ and thus, for small $\varepsilon$
\[
|\overline{\beta_\varepsilon}| \le |\beta_\varepsilon| + |\psi_\varepsilon | \le  \varepsilon^{\delta_0} + C_g \varepsilon + (L-2) \varepsilon^{\delta_0} \le L \varepsilon^{\delta_0}.
\]
Clearly, $|\overline{\beta}_\varepsilon| \le |\varepsilon^{-\delta_0} \overline{\beta}_\varepsilon| \le L$ and we find once again by definition \eqref{Tronc2} that
\begin{align*}
\mathbf{H}_\varepsilon (\bm{\alpha}_{\varepsilon}, \beta_\varepsilon, x, t) = \overline{\mathbf{H}}_\varepsilon( \bm{\alpha}_{\varepsilon}, \beta_\varepsilon,x,t) = \mathbf{H} (\bm{\alpha}_{\varepsilon}, \beta_\varepsilon, x, t).
\end{align*}
\end{itemize}
We have verified that both functions $(\bm{\alpha}_{\varepsilon}, \beta_\varepsilon)$ and $(\mathbf{U}_\varepsilon, V_\varepsilon)$ satisfy the same reaction-diffusion-type equation for all $\varepsilon \le \varepsilon_1$. By uniqueness of solutions to problem \eqref{truncsys}, we conclude $(\bm{\alpha}_{\varepsilon}, \beta_\varepsilon) = (\mathbf{U}_\varepsilon, V_\varepsilon)$ for $\varepsilon \le \varepsilon_1$ and estimate \eqref{removetrunc} %from Proposition \ref{LinftynD} are
is also valid for the original error functions $(\mathbf{U}_\varepsilon, V_\varepsilon)$ on the domain $\Omega_T$. 
\end{proof}

\begin{proposition}
 \label{coupledbeta}
Let Assumptions \ref{AssumptionN}--\ref{AssumptionB} hold and let $\varepsilon \le 1$. Then there exists a unique mild solution $(\bm{\alpha}_{\varepsilon},\beta_\varepsilon)$ of the truncated problem \eqref{truncsys} which exists globally-in-time. Furthermore, for all finite times $T>0$, we have $(\bm{\alpha}_{\varepsilon}, \beta_\varepsilon) \in L^\infty(\Omega_T)^{m+1}$ and $\beta_\varepsilon \in L^\infty(0,T;H^1(\Omega))$ is a weak solution with weak derivative $\partial_t \beta_\varepsilon \in L^2(\Omega_T)$. 
\end{proposition}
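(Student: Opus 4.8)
The plan is to regard the truncated system \eqref{truncsys} as a single semilinear reaction-diffusion-type problem of the form \eqref{Rotheset1}, with diffusion matrix $\mathbf{D}_\varepsilon = \mathrm{diag}(\mathbf{D}, \varepsilon^{-1})$ acting on $(\bm{\alpha}_\varepsilon, \beta_\varepsilon)$ and right-hand side $\mathbf{R}_\varepsilon(\bm{\alpha}, \beta, x, t)$ collecting the linear Jacobian terms, the inhomogeneity $\nabla_v\mathbf{h}\cdot\psi_\varepsilon$, and the truncated remainder $(\mathbf{F}_\varepsilon, G_\varepsilon)$. Then I would invoke the mild-solution machinery behind Proposition \ref{prop1} verbatim. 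The only input to check is that $\mathbf{R}_\varepsilon$ obeys the local Lipschitz condition \eqref{Loclip}: the Jacobians are uniformly bounded by Assumption \ref{AssumptionB}, $\psi_\varepsilon$ is a fixed $L^\infty(\Omega)$-function by Lemma \ref{decayestmean}, and $\mathbf{F}_\varepsilon, G_\varepsilon$ are Lipschitz on bounded sets because they are compositions of the locally Lipschitz $\nabla\mathbf{h}$ (Assumption \ref{AssumptionN}) with the Lipschitz cut-offs $\bm{\Theta}$ and $\rho$ appearing in \eqref{Tronc2}. This yields a unique mild solution $(\bm{\alpha}_\varepsilon, \beta_\varepsilon)$ in the sense of Definition \ref{mildsol} on a maximal interval $[0, T_{\max})$.

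The decisive ingredient for global existence is the \emph{global} boundedness of the truncated remainder established in Lemma \ref{quadcut}: since $|\mathbf{F}_\varepsilon|, |G_\varepsilon| \le C(\varepsilon^{2\delta_0} + \min\{1, |\overline{\beta}_\varepsilon|\}) \le C'$ uniformly, and the linear coefficients and $\psi_\varepsilon$ are bounded, one has the affine estimate $|\mathbf{R}_\varepsilon(\bm{\alpha}, \beta, \cdot, \cdot)| \le C(1 + |\bm{\alpha}| + |\beta|)$. Inserting this into the integral representation \eqref{Rotheint} and using that every component semigroup $S_\Delta(D_i\,\cdot)$ and $S_\Delta(\cdot/\varepsilon)$ is an $L^\infty(\Omega)$-contraction, the Gronwall lemma gives $\|(\bm{\alpha}_\varepsilon,\beta_\varepsilon)(\cdot,t)\|_{L^\infty(\Omega)^{m+1}} \le \mathrm{e}^{Ct}-1$, which is finite on every bounded interval. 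The $L^\infty$ norm therefore cannot explode in finite time, the continuation criterion forces $T_{\max}=\infty$, and $(\bm{\alpha}_\varepsilon,\beta_\varepsilon)\in L^\infty(\Omega_T)^{m+1}$ for every finite $T$. No smoothing is needed for the possibly non-diffusing components of $\bm{\alpha}_\varepsilon$, for which the relevant semigroup is the identity and the bound is purely Gronwall-type.

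It remains to upgrade the regularity of the diffusing component $\beta_\varepsilon$. Here I would freeze the forcing $R_\varepsilon := \nabla_\mathbf{u}g\cdot\bm{\alpha}_\varepsilon + \nabla_v g\cdot(\beta_\varepsilon+\psi_\varepsilon) + G_\varepsilon(\bm{\alpha}_\varepsilon,\beta_\varepsilon,\cdot,\cdot)$, which by the previous step belongs to $L^\infty(\Omega_T)\hookrightarrow L^2(\Omega_T)$, and view $\beta_\varepsilon$ as the solution of the \emph{linear} scalar heat equation $\partial_t\beta_\varepsilon - \varepsilon^{-1}\Delta\beta_\varepsilon = R_\varepsilon$ with zero Neumann and initial data. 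Since the Neumann heat semigroup is analytic and strongly continuous on $L^2(\Omega)$, linear parabolic theory produces a weak solution $w \in L^\infty(0,T;H^1(\Omega))$ with $\partial_t w \in L^2(\Omega_T)$; the bounds follow from the standard energy estimate obtained by testing with $\partial_t w$, which yields $\tfrac{1}{2}\|\partial_t w\|_{L^2(\Omega_T)}^2 + \tfrac{1}{2\varepsilon}\sup_t\|\nabla w\|_{L^2(\Omega)}^2 \le \tfrac{1}{2}\|R_\varepsilon\|_{L^2(\Omega_T)}^2$ because $\nabla w(\cdot,0)=0$. Finally $w = \beta_\varepsilon$, since both are represented by the same Duhamel integral against the heat semigroup, consistent on $L^2\cap L^\infty$, so uniqueness of the mild solution transfers the regularity to $\beta_\varepsilon$.

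The main obstacle I anticipate is not a single estimate but the reconciliation of two solution concepts in two functional settings: the global-in-time $L^\infty$ bound rests on the $L^\infty$ contraction semigroup of Definition \ref{mildsol}, which is not strongly continuous, whereas the $H^1$-regularity of $\beta_\varepsilon$ must be extracted in the $L^2$ (or $L^p$) variational framework and then matched to the mild solution. The one genuinely technical verification is that the $\varepsilon$-dependent scaling $\rho(\varepsilon^{-\delta_0}|\overline{z}|/L)$ in \eqref{Tronc2} preserves the local Lipschitz continuity of $\mathbf{R}_\varepsilon$; this affects only the $\varepsilon$-dependence of the Lipschitz constant and hence the length of the first Picard step, not the existence or the global extension.
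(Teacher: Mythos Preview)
Your proposal is correct and follows essentially the same route as the paper: rewrite \eqref{truncsys} as a single $(m+1)$-component system with right-hand side $\mathbf{h}_\varepsilon$, invoke the local existence machinery of Proposition \ref{prop1} after checking the local Lipschitz property \eqref{Loclip}, use the affine bound from Lemma \ref{quadcut} to rule out blow-up (the paper simply cites \cite[Part II, Theorem 1]{Rothe} for this step rather than writing out Gronwall), and then upgrade $\beta_\varepsilon$ by viewing it as the solution of a linear heat equation with $L^\infty$ right-hand side and zero initial datum, to which the paper applies Proposition \ref{heatinhom}. Your closing remarks about matching the $L^\infty$ mild solution with the $L^2$ weak solution and about the $\varepsilon$-dependence of the Lipschitz constant are accurate but, as you note, do not obstruct the argument.
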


\begin{proof}
To apply Rothe's method as in Proposition \ref{prop1}, we write system \eqref{truncsys} as $m+1$ differential equations
\begin{align*}
\frac{\partial \bm{\Psi}_{\varepsilon}}{\partial t} - \mathbf{D}_\varepsilon \Delta \bm{\Psi}_{\varepsilon}  & = \mathbf{h}_\varepsilon(\bm{\Psi}_{\varepsilon},x,t) \quad \text{in} \quad \Omega_T, \qquad \bm{\Psi}_{\varepsilon}(\cdot, 0) =\mathbf{0} \quad \text{in} \quad \Omega.
\end{align*}
The diffusing components are endowed with zero Neumann boundary conditions and $\mathbf{h}_\varepsilon$ is given by 
\begin{align*}
\mathbf{h}_{\varepsilon}(\bm{\Psi}_{\varepsilon}, x,t) & = \mathbf{J}(x,t) \cdot \begin{pmatrix} \bm{\alpha}_{\varepsilon} \\
\beta_\varepsilon +  \psi_\varepsilon (x,t)
\end{pmatrix} + \begin{pmatrix}
\mathbf{F}_\varepsilon (\bm{\alpha}_{\varepsilon} , \beta_\varepsilon,x,t)\\
G_\varepsilon (\bm{\alpha}_{\varepsilon} , \beta_\varepsilon,x,t)
\end{pmatrix}.
\end{align*}
Here and in the sequel, we use the notation $\mathbf{J}$ for the Jacobian 
\begin{align}
\mathbf{J}(x,t) := \begin{pmatrix}
\mathbf{A}(x,t) & \mathbf{B}(x,t) \\
\mathbf{C}(x,t)  & D(x,t) 
\end{pmatrix}  := \begin{pmatrix}
\nabla_\mathbf{u} \mathbf{f}(\mathbf{u}(x,t), v(t), x,t) &  \nabla_v \mathbf{f}(\mathbf{u}(x,t), v(t), x,t)\\
 \nabla_\mathbf{u} g(\mathbf{u}(x,t), v(t), x,t) &  \nabla_v g(\mathbf{u}(x,t), v(t), x,t)
\end{pmatrix} \label{Jacobian}
\end{align}
evaluated at the shadow solution $(\mathbf{u}, v)$. Local Lipschitz continuity of $\mathbf{h}_\varepsilon$ in the variable $\bm{\Psi}_{\varepsilon}$ on bounded sets in $\overline{\Omega} \times \mathbb{R}_{\ge 0}$ carries over from $\mathbf{h}$ since $\mathbf{F}_\varepsilon$ and $G_\varepsilon$ are locally Lipschitz in the sense of estimates \eqref{Loclip}, see definition \eqref{Tronc2} of the truncation. Following the proof of Proposition \ref{prop1}, there exists a local-in-time mild solution in the sense of Definition \ref{mildsol}. We obtain the integral representation
\begin{align*}
\bm{\Psi}_{\varepsilon}(\cdot,t) = \int_0^t \mathbf{S}(t-\tau) \mathbf{h}_\varepsilon(\bm{\Psi}_{\varepsilon}(\cdot,\tau),\cdot,\tau) \; \mathrm{d}\tau.
\end{align*}
The function $\mathbf{h}_\varepsilon$ is linearly bounded in the variable $\bm{\Psi}_{\varepsilon}$ due to Lemma \ref{quadcut}. By the same reasoning as in \cite[Part II, Theorem 1]{Rothe}, this implies that no blow-up is possible and we obtain a unique mild solution with $\bm{\Psi}_{\varepsilon} \in L^{\infty}(\Omega_T)^{m+k}$ for all $T < \infty$. To improve regularity for the diffusing components, we apply parabolic $L^2$ theory performed in Proposition \ref{heatinhom}. Recall for this that $\beta_\varepsilon$ solves equation \eqref{eqzd} with $R \in L^{\infty}(\Omega_T)$.
%\[
%\frac{\partial \beta_\varepsilon}{\partial t} - \frac{1}{\varepsilon} \Delta \beta_\varepsilon  = R_\varepsilon \in L^{\infty}(\Omega_T)
%\]
%for an initial datum which is zero.
\end{proof}

The rest of this section is devoted to an estimation of the solution $(\bm{\alpha}_\varepsilon, \beta_\varepsilon)$ of the truncated system \eqref{truncsys}. Our strategy of the proof is depicted in Figure \ref{figstrategy}, starting in the left upper corner.\\
 In order to estimate the truncated solution $(\bm{\alpha}_{\varepsilon}, \beta_\varepsilon)$, we decompose $\beta_\varepsilon$ into its spatial mean $b_\varepsilon = \langle \beta_\varepsilon \rangle_\Omega$ and the residual $W_\varepsilon = \beta_\varepsilon - \langle \beta_\varepsilon \rangle_\Omega$ with $\langle W_\varepsilon \rangle_\Omega =0$ to exploit asymptotic properties of the heat semigroup \cite{Hale89, Winkler}. As already figured out in the linear case in \cite{KMCMlinear}, an additional stability condition is necessary to obtain long-time estimates. Assuming stability conditions \ref{AssumptionL1p}--\ref{AssumptionL0} for a linearized shadow system yields $L^\infty(\Omega_T)$ estimates for $(\bm{\alpha}_{\varepsilon}, \beta_\varepsilon)$ which are valid on long-time ranges $(0,T)$. Finally, using Lemma \ref{Removetrunc}, we return in Section \ref{sec:errorestimates} to estimates for the error functions $(\mathbf{U}_\varepsilon, V_\varepsilon)$ and prove Theorem \ref{TheoremToy} and Theorem \ref{Theorem}.

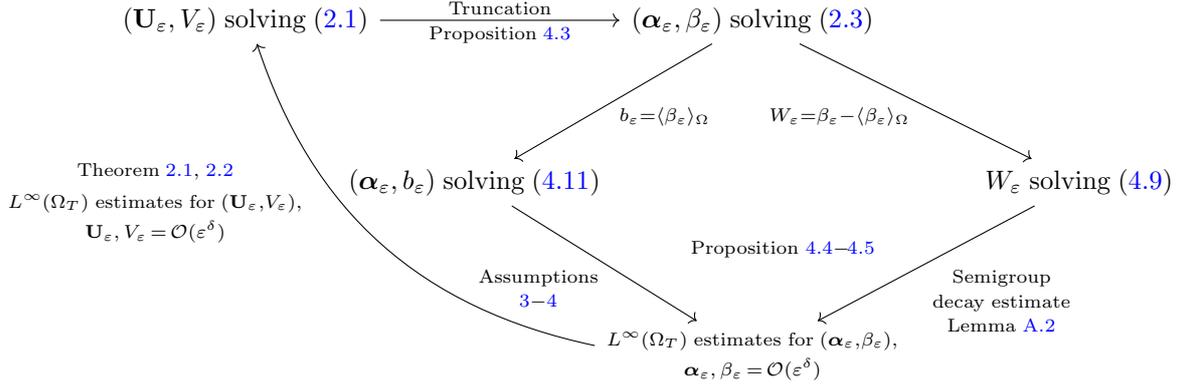
\begin{figure}[h]
\begin{center}
	\begin{tikzcd}
		& (\mathbf{U}_\varepsilon, V_\varepsilon) \; \text{solving} \; \eqref{errorsys} \arrow[rightarrow]{rr}{\text{Truncation}}[swap]{\text{Proposition} \; \ref{coupledbeta}}   
		& [-4em]
		&  [-3em] (\bm{\alpha}_{\varepsilon}, \beta_\varepsilon) \;   \text{solving} \; \eqref{truncsys}  \arrow[rightarrow]{rd}{}[swap]{W_\varepsilon = \beta_\varepsilon - \langle \beta_\varepsilon  \rangle_\Omega}  \arrow[rightarrow]{ld}{b_\varepsilon= \langle \beta_\varepsilon \rangle_\Omega}
		&  \mbox{}\\[6ex]
		&  [-4em]
		& (\bm{\alpha}_{\varepsilon}, b_\varepsilon) \;   \text{solving} \; \eqref{intalphaDbD} %
		\arrow[rightarrow]{rd}{ \qquad \quad \text{Proposition \ref{nonerrorest}} -}[swap]{\substack{\text{Assumptions} \\[0.5ex] \ref{AssumptionL1p}-\ref{AssumptionL0}}}
		&
		& W_\varepsilon 	\;   \text{solving} \; \eqref{betasplit}  
		 \arrow[rightarrow]{ld}{\substack{\text{Semigroup}\\[0.5ex] \text{decay estimate} \\[0.5ex] %\eqref{winlemmaapp}
		\text{Lemma \ref{Winterlemma}} }}[swap]{\substack{\ref{LinftynD} \quad \; \; \;  \\[-1ex] \mbox{}}} 
		\\[6ex]
		&  
		&
		& \substack{L^\infty(\Omega_T) \; \text{estimates for} \; (\bm{\alpha}_{\varepsilon}, \beta_\varepsilon),\\[0.5ex] \bm{\alpha}_{\varepsilon}, \, \beta_\varepsilon \, = \, \mathcal{O}(\varepsilon^\delta)} 
		\arrow[to=lluu, bend left, crossing over, near end, "\substack{\text{Theorem} \; \ref{TheoremToy}, \; \ref{Theorem}\\[1ex] L^\infty(\Omega_T) \; \text{estimates for} \; (\mathbf{U}_\varepsilon, V_\varepsilon),\\[0.5ex] \mathbf{U}_\varepsilon, \, V_\varepsilon \, = \, \mathcal{O}(\varepsilon^\delta)}"]
		& 
	\end{tikzcd}
\end{center}
       \caption{Strategy of the proof of error estimates.} \label{figstrategy}
\end{figure}

%To obtain estimates for very long time intervals of order $\varepsilon^{-\ell}$ for some $\ell>0$, we need a more detailed estimation of the quantities which are involved. We decompose $\beta_\varepsilon$ into its spatial mean $b_\varepsilon = \langle \beta_\varepsilon \rangle_\Omega$ and the residual $W_\varepsilon = \beta_\varepsilon - \langle \beta_\varepsilon \rangle_\Omega$ with $\langle W_\varepsilon \rangle_\Omega =0$ to exploit asymptotic properties of the heat semigroup. 
The differential equation of $\beta_\varepsilon$ from problem \eqref{truncsys} implies the following system
\begin{align} \label{betasplit}
\begin{split}
\frac{\partial W_\varepsilon }{\partial t} - \frac{1}{\varepsilon} \Delta W_\varepsilon & = \nabla_\mathbf{u} g \cdot \bm{\alpha}_{\varepsilon} - \langle \nabla_\mathbf{u} g \cdot \bm{\alpha}_{\varepsilon} \rangle_\Omega +   \nabla_v g \cdot b_\varepsilon - \langle \nabla_v g \cdot b_\varepsilon \rangle_\Omega  \\
& \quad  +   \nabla_v g \cdot (W_\varepsilon + \psi_\varepsilon) - \langle \nabla_v g \cdot (W_\varepsilon + \psi_\varepsilon)  \rangle_\Omega   + G_\varepsilon - \langle G_\varepsilon \rangle_\Omega \quad  \; \; \, \mbox{in} \quad  \Omega \times \mathbb{R}_{>0},\\
\frac{\mathrm{d} b_\varepsilon}{\mathrm{d} t} & =  \langle \nabla_\mathbf{u} g \cdot \bm{\alpha}_{\varepsilon} \rangle_\Omega +  \langle \nabla_v g \cdot b_\varepsilon \rangle_\Omega  +   \langle \nabla_v g \cdot (W_\varepsilon + \psi_\varepsilon)  \rangle_\Omega + \langle G_\varepsilon \rangle_\Omega \quad \mbox{in} \quad  \Omega \times \mathbb{R}_{>0}, \\
  W_\varepsilon (\cdot, 0)   & = 0  \quad \text{in} \quad \Omega, \qquad b_\varepsilon (0) = 0, \qquad  \frac{\partial W_\varepsilon }{\partial \nu} = 0 \quad  \mbox{on} \quad  \partial \Omega \times \mathbb{R}_{>0}. 
  \end{split}
\end{align}
We start estimating the term $W_\varepsilon$ using semigroup estimates from \cite[Lemma 1.3]{Winkler}. Denoting the right-hand side of the elliptic equation by $R_\varepsilon$, the solution $W_\varepsilon$ may be written as
\[
W_\varepsilon(\cdot, t) = \int_0^t S_{\Delta}((t-\tau)/\varepsilon) R_\varepsilon(\cdot, \tau) \; \mathrm{d}\tau.
\]
By Assumption \ref{AssumptionB}, the Jacobian $\mathbf{J} \in L^\infty(\Omega \times \mathbb{R}_{\ge 0})^{(m+1)\times (m+1)}$ is uniformly bounded. We infer from decay estimate \eqref{indecay2} for $\psi_\varepsilon$ and $\langle R_\varepsilon \rangle_\Omega = 0$ that
\begin{align}
\begin{split}
\| W_\varepsilon(\cdot,t) \|_{L^p(\Omega)}  & \le C \int_0^t \mathrm{e}^{-\lambda_1 (t-\tau)/\varepsilon} \Big(\|\bm{\alpha}_{\varepsilon}(\cdot,\tau)\|_{L^p(\Omega)^m} +  |\Omega|^{1/p} |b_\varepsilon(\tau)|  \\ 
& \quad  + \,  \|(W_\varepsilon + \psi_\varepsilon)(\cdot,\tau)\|_{L^p(\Omega)}+ \|G_\varepsilon(\cdot, \tau) \|_{L^p(\Omega)} \Big) \; \mathrm{d}\tau. \label{WDest} 
\end{split}
\end{align}
This estimate implies that $W_\varepsilon$ can always be estimated in terms of $\bm{\alpha}_{\varepsilon}, b_\varepsilon$ and powers of $\varepsilon$. Hence, it remains to control the terms $\bm{\alpha}_{\varepsilon}$ and $b_\varepsilon$. The components $\bm{\alpha}_{\varepsilon}, b_\varepsilon$ satisfy a shadow problem whose solution is given by 
\begin{align}
\begin{pmatrix}
\bm{\alpha}_{\varepsilon}(\cdot,t)\\
b_\varepsilon(t)
\end{pmatrix} = \int_0^t \mathbf{W}(t,\tau) \begin{pmatrix}
\mathbf{B}(\cdot, \tau) (W_\varepsilon + \psi_\varepsilon)(\cdot,\tau) + \mathbf{F}_\varepsilon(\cdot,\tau) \\
\langle D(\cdot,\tau)  (W_\varepsilon + \psi_\varepsilon)(\cdot,\tau) + G_\varepsilon(\cdot,\tau) \rangle_\Omega
\end{pmatrix} \; \mathrm{d}\tau \label{intalphaDbD}
\end{align}
where we used the evolution system $\mathcal{W}$ induced by the linearization \eqref{shadlinear} of the shadow problem. This system is given by evolution operators $\mathbf{W}(t,s)$ for $t,s \in \mathbb{R}_{\ge 0}, s \le t,$ defined by 
\begin{align}
\bm{\xi}(\cdot, t) = \mathbf{W}(t,s) \bm{\xi}(\cdot, s), \qquad \bm{\xi}(\cdot, 0) = \begin{pmatrix}
\bm{\xi}^0_1\\
 \xi_2^0
\end{pmatrix}, \label{evolsystemnonlinear}
\end{align}
where $\bm{\xi} \in C(\mathbb{R}_{\ge 0}; L^p(\Omega)^m \times \mathbb{R})$ is the unique solution of the homogeneous linear shadow problem \eqref{shadlinear}, i.e.,
\begin{align*}
\frac{\partial \bm{\xi}_1}{\partial t} - \mathbf{D} \Delta  \bm{\xi}_1   &= \mathbf{A}(x,t) \bm{\xi}_1 + \mathbf{B}(x,t) \xi_2 \qquad \quad  \, \text{in} \quad \Omega \times \mathbb{R}_{>0}, \qquad \bm{\xi}_1 (\cdot, 0)  =\bm{\xi}_1^0  \quad \text{in} \quad \Omega, \\
   \frac{\mathrm{d} \xi_2}{\mathrm{d} t} &= \langle \mathbf{C}(\cdot, t) \bm{\xi}_1\rangle_\Omega + \langle D(\cdot,t) \xi_2 \rangle_\Omega  \quad \mbox{in} \quad \mathbb{R}_{>0},  \qquad \qquad \; \, \,  \xi_2 (0) =  \xi_2^0.
\end{align*}
In order to control the growth of the errors $\bm{\alpha}_{\varepsilon}, b_\varepsilon$, we make use of the stability condition \ref{AssumptionL1p}.

\begin{proposition} \label{nonerrorest} 
 Let Assumptions \ref{AssumptionN}--\ref{AssumptionL1p} hold for some $1 \le p \le \infty$ and $\sigma \ge 0$.\\
 Then, if $\sigma = 0$, for any $\alpha \in (0, 1], \delta_0 \in (0, 1/2]$ with $2 \delta_0 + (\alpha-1) \in (0,1]$ there exist constants $C, \varepsilon_0>0$ independent of time $T$ and diffusion $1/\varepsilon$ such that for all times $T \le \varepsilon^{\alpha-1}$ and all $\varepsilon \le \varepsilon_0$ the solution $(\bm{\alpha}_{\varepsilon}, \beta_\varepsilon)$ of system \eqref{truncsys} satisfies
 \begin{align}
 \sup_{t \in [0,T]} \left( \| \bm{\alpha}_{\varepsilon}(\cdot,t) \|_{L^p(\Omega)^m} + \| \beta_\varepsilon(\cdot,t) \|_{L^p(\Omega)}\right) & \leq C \varepsilon^{2 \delta_0 + (\alpha-1)}. \label{nonerrorestLp}
 \end{align}
Moreover, if $\sigma>0$, then for any $\delta_0 \in (0, 1/2]$, there exist constants $C, \varepsilon_0>0$ independent of diffusion such that for all $\varepsilon \le \varepsilon_0$ the solution $(\bm{\alpha}_{\varepsilon}, \beta_\varepsilon)$ of the truncated problem \eqref{truncsys} satisfies 
 \begin{align}
\|\bm{\alpha}_{\varepsilon}\|_{L^p(\Omega \times \mathbb{R}_{\ge 0})^m} +  \|\beta_\varepsilon\|_{L^p(\Omega \times \mathbb{R}_{\ge 0})}   \le  C\varepsilon^{2\delta_0}. \label{nonerrorestLpexp}
 \end{align}
\end{proposition}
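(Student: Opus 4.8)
The plan is to run a coupled Gronwall argument on the pair $(\bm{\alpha}_\varepsilon, b_\varepsilon)$ and the zero-mean residual $W_\varepsilon = \beta_\varepsilon - \langle \beta_\varepsilon \rangle_\Omega$, following the arrows in Figure \ref{figstrategy}. Introduce the scalar quantity
\[
\Phi_\varepsilon(t) := \|\bm{\alpha}_\varepsilon(\cdot,t)\|_{L^p(\Omega)^m} + |b_\varepsilon(t)|,
\]
so that, since $\beta_\varepsilon = b_\varepsilon + W_\varepsilon$, a bound on $\Phi_\varepsilon$ together with a bound on $\|W_\varepsilon\|_{L^p(\Omega)}$ yields the desired control of $\|\bm{\alpha}_\varepsilon\|_{L^p} + \|\beta_\varepsilon\|_{L^p}$. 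First I would show that the residual $W_\varepsilon$ is genuinely of higher order. Inserting the truncation bound \eqref{Tronc3} and the decay \eqref{indecay2} of $\psi_\varepsilon$ into the Duhamel estimate \eqref{WDest}, and using $\int_0^t \mathrm{e}^{-\lambda_1(t-\tau)/\varepsilon}\,\mathrm{d}\tau \le \varepsilon/\lambda_1$, one obtains
\[
\sup_{\tau \le t}\|W_\varepsilon(\cdot,\tau)\|_{L^p(\Omega)} \le C\varepsilon\Big(\sup_{\tau \le t}\Phi_\varepsilon(\tau) + \varepsilon^{2\delta_0} + \varepsilon\Big),
\]
where for small $\varepsilon$ the self-referential contribution $C\varepsilon\|W_\varepsilon+\psi_\varepsilon\|$ in \eqref{WDest} is absorbed into the left-hand side. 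Thus $W_\varepsilon$ carries an extra factor $\varepsilon$ relative to $\Phi_\varepsilon$ and may be treated as a subordinate perturbation.

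Second, I would estimate $(\bm{\alpha}_\varepsilon, b_\varepsilon)$ from the variation-of-constants formula \eqref{intalphaDbD} by applying the stability Assumption \ref{AssumptionL1p} to the evolution operators $\mathbf{W}(t,\tau)$. Using boundedness of the Jacobian entries $\mathbf{B}, D$ (Assumption \ref{AssumptionB}), the residual bound just obtained, the bound \eqref{indecay2} on $\psi_\varepsilon$, and the truncation bound \eqref{Tronc3} for $\mathbf{F}_\varepsilon, G_\varepsilon$, this produces an integral inequality of the form
\[
\Phi_\varepsilon(t) \le C\int_0^t \mathrm{e}^{-\sigma(t-\tau)}\Big(\varepsilon^{2\delta_0} + \|\psi_\varepsilon(\cdot,\tau)\|_{L^p(\Omega)} + \mathds{1}_{\{\tau \le -\varepsilon\log\varepsilon/\lambda_1\}}(\tau)\,\|\beta_\varepsilon(\cdot,\tau)\|_{L^p(\Omega)} + \varepsilon\,\Phi_\varepsilon(\tau)\Big)\,\mathrm{d}\tau.
\]
The essential bookkeeping is that the leading term $\varepsilon^{2\delta_0}$, integrated over $[0,T]$ with $T \le \varepsilon^{\alpha-1}$, produces exactly $\varepsilon^{2\delta_0+\alpha-1}$; the $\psi_\varepsilon$ contribution is $O(\varepsilon^\alpha) \le O(\varepsilon^{2\delta_0+\alpha-1})$ because $2\delta_0 \le 1$; and the initial-layer indicator term is kept of the right order by bounding the truncation remainder \emph{linearly} through $\min\{1,|\overline{\beta_\varepsilon}|\} \le |\beta_\varepsilon| + |\psi_\varepsilon|$, so that it feeds back into the Gronwall term over a vanishingly short interval of length $O(\varepsilon|\log\varepsilon|)$ plus a harmless $O(\varepsilon)$ from $\psi_\varepsilon$, rather than the crude $O(\varepsilon|\log\varepsilon|)$ one would get from $\min\{1,\cdot\}\le 1$.

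For $\sigma = 0$ the surviving inequality reads $\Phi_\varepsilon(t) \le C\varepsilon^{2\delta_0+\alpha-1} + C\varepsilon\int_0^t \Phi_\varepsilon(\tau)\,\mathrm{d}\tau$, and Gronwall's lemma gives $\Phi_\varepsilon(t) \le C\varepsilon^{2\delta_0+\alpha-1}\mathrm{e}^{C\varepsilon t}$; the time restriction $T \le \varepsilon^{\alpha-1}$ is precisely what forces $\varepsilon t \le \varepsilon^\alpha \to 0$, keeping the exponential factor bounded and yielding \eqref{nonerrorestLp} once the higher-order $W_\varepsilon$ is added back. For $\sigma > 0$ the exponential weight makes $\int_0^t \mathrm{e}^{-\sigma(t-\tau)}\varepsilon^{2\delta_0}\,\mathrm{d}\tau \le \varepsilon^{2\delta_0}/\sigma$ bounded uniformly in $t$, so writing $M_t = \sup_{s \le t}\Phi_\varepsilon(s)$ — finite for each $t$ by the global existence in Proposition \ref{coupledbeta} — leads to $M_t \le C\varepsilon^{2\delta_0}/\sigma + (C\varepsilon/\sigma)\,M_t$, which for small $\varepsilon$ gives a bound uniform in $t$ and hence \eqref{nonerrorestLpexp} upon letting $t \to \infty$.

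I expect the main obstacle to be closing this coupled loop cleanly rather than any single estimate: one must verify that the feedback of $W_\varepsilon$ into $(\bm{\alpha}_\varepsilon, b_\varepsilon)$ and of $(\bm{\alpha}_\varepsilon, b_\varepsilon)$ into $W_\varepsilon$ is genuinely subordinate (so that both the $C\varepsilon\,\Phi_\varepsilon$ and residual terms can be absorbed uniformly for $\varepsilon \le \varepsilon_0$), and one must track the exponents carefully enough to confirm that $\varepsilon^{2\delta_0}T$ is the dominant contribution while every transient — the fast-decaying part of $\psi_\varepsilon$ and the initial-layer indicator in \eqref{Tronc3} — stays at or below the target rate under the hypothesis $2\delta_0 + (\alpha-1) \in (0,1]$. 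Keeping the Gronwall constant bounded over the long interval is exactly what dictates the restriction $T \le \varepsilon^{\alpha-1}$ in the neutrally stable case, and the strict inequality $\sigma > 0$ in the global case.
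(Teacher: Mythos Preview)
Your proposal is correct and follows essentially the same route as the paper: decompose $\beta_\varepsilon = b_\varepsilon + W_\varepsilon$, use the heat-semigroup decay (Lemma \ref{Winterlemma}) to bound $W_\varepsilon$ in terms of $(\bm{\alpha}_\varepsilon,b_\varepsilon)$ with an extra factor $\varepsilon$, use the stability Assumption \ref{AssumptionL1p} on the integral representation \eqref{intalphaDbD} to bound $(\bm{\alpha}_\varepsilon,b_\varepsilon)$ in terms of $W_\varepsilon$ plus the forcing $\varepsilon^{2\delta_0}$, absorb the short initial-layer indicator term, and then close the loop. The only cosmetic difference is that the paper closes via two successive sup-absorptions (first absorbing $b_\varepsilon$ over the interval of length $-\varepsilon\log\varepsilon/\lambda_1$, then substituting into the $W_\varepsilon$ inequality and absorbing $W_\varepsilon$), whereas you phrase the $\sigma=0$ case as a Gronwall inequality on $\Phi_\varepsilon$ with coefficient $C\varepsilon$; both arguments are equivalent and the time restriction $T\le\varepsilon^{\alpha-1}$ plays the same role in each.
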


\begin{proof} In the following proof, the constant $C>0$ may vary from line to line, however, it is independent of $\varepsilon$ and time. We already estimated $W_\varepsilon$ and obtained a relation to $\bm{\alpha}_{\varepsilon}, b_\varepsilon$ in inequality \eqref{WDest}. In view of estimate \eqref{Tronc3} for $G_\varepsilon$, where $\overline{\beta_\varepsilon} = b_\varepsilon + W_\varepsilon + \psi_\varepsilon$,
and estimate \eqref{indecay2} for $\psi_\varepsilon$, we observe %for each $\delta_0 \ge 0$
\begin{align}
\begin{split}
\| W_\varepsilon(\cdot,t) \|_{L^p(\Omega)}  & \le C \int_0^t \mathrm{e}^{-\lambda_1(t-\tau)/\varepsilon} \left(\|\bm{\alpha}_{\varepsilon}(\cdot,\tau)\|_{L^p(\Omega)^m} +  |\Omega|^{1/p} |b_\varepsilon(\tau)|  \right) \; \mathrm{d}\tau \\ 
& \quad + C \int_0^t \mathrm{e}^{-\lambda_1 (t-\tau)/\varepsilon} \|W_\varepsilon(\cdot,\tau)\|_{L^p(\Omega)} \; \mathrm{d}\tau + C|\Omega|^{1/p} \varepsilon. \label{WDest2} 
\end{split}
\end{align}
To obtain a corresponding inequality for $\bm{\alpha}_{\varepsilon}, b_\varepsilon$, we consider the explicit formula \eqref{intalphaDbD}. Applying Assumption \ref{AssumptionL1p} on $\mathcal{W}$, estimate \eqref{indecay2} for $\psi_\varepsilon$ and estimate \eqref{Tronc3} for the truncation $\mathbf{H}_\varepsilon=(\mathbf{F}_\varepsilon, G_\varepsilon)$ to this integral representation yields
\begin{align*}
\|\bm{\alpha}_{\varepsilon}(\cdot,t)\|_{L^p(\Omega)^m} + |\Omega|^{1/p}  |b_\varepsilon(t)| & \le C \int_0^t \mathrm{e}^{-\sigma(t-\tau)} \left( \|(W_{\varepsilon} + \psi_{\varepsilon})(\cdot,\tau)\|_{L^p(\Omega)} + \|\mathbf{H}_{\varepsilon}(\cdot, \tau)\|_{L^p(\Omega)^{m+1}} \right) \, \mathrm{d}\tau\\
& \le C \int_0^t  \mathrm{e}^{-\sigma(t-\tau)}  \|W_\varepsilon(\cdot,\tau)\|_{L^p(\Omega)} \; \mathrm{d}\tau + C |\Omega|^{1/p} f_\varepsilon(t) \\
& \quad + C \int_0^t  \mathrm{e}^{-\sigma(t-\tau)}   { \mathds{1}}_{\{  \tau \leq -\varepsilon \log \varepsilon/ \lambda_1 \}}(\tau) |\Omega|^{1/p}  |b_\varepsilon(\tau)| \; \mathrm{d}\tau.
\end{align*}
Here we restrict ourselves to $\delta_0 \le 1/2$ and denote
\[
f_\varepsilon(t) = \int_0^t \mathrm{e}^{-\sigma(t-\tau)} \left(  \varepsilon^{2\delta_0} + \mathrm{e}^{-\lambda_1 \tau/\varepsilon} \right) \; \mathrm{d}\tau \le 
\begin{cases}
C \left( \varepsilon + \varepsilon^{2\delta_0} t\right) &  \text{if} \quad \sigma=0,\\
C  \varepsilon^{2\delta_0} & \text{if} \quad \sigma>0.
\end{cases}
\]
Since $-\varepsilon \log \varepsilon\to 0$ as $\varepsilon \to 0$ by L'Hospital's rule, we absorb $b_\varepsilon$ on the left-hand side and obtain
\begin{align} \label{estalphaDbD}
\begin{split}
\sup_{t \in [0,T]} \left( \|\bm{\alpha}_{\varepsilon}(\cdot,t)\|_{L^p(\Omega)^m} + |\Omega|^{1/p}  |b_\varepsilon(t)| \right) & \le C \sup_{t \in [0,T]} \left( \int_0^t  \mathrm{e}^{-\sigma(t-\tau)}  \|W_\varepsilon(\cdot,\tau)\|_{L^p(\Omega)} \; \mathrm{d}\tau + |\Omega|^{1/p} f_\varepsilon(t) \right)\\
& \le C g_{1,\varepsilon}(T) \sup_{t \in [0,T]}  \|W_\varepsilon(\cdot,t)\|_{L^p(\Omega)} + C |\Omega|^{1/p} g_{2, \varepsilon}(T) 
\end{split}
\end{align}
for 
\[
g_{1,\varepsilon}(T) = \begin{cases}
 T &  \text{if} \quad \sigma=0,\\
1 & \text{if} \quad \sigma>0
\end{cases} \qquad \text{and} \qquad g_{2,\varepsilon}(T) = \begin{cases}
\varepsilon + \varepsilon^{2 \delta_0} T &  \text{if} \quad \sigma=0,\\
 \varepsilon^{2 \delta_0} & \text{if} \quad \sigma>0.
\end{cases}
\]
Estimate \eqref{estalphaDbD} combined with inequality \eqref{WDest2} for $W_\varepsilon$ leads to an estimate for $W_\varepsilon$, more precisely, 
\begin{align*}
 \sup_{t \in [0,T]} \| W_\varepsilon(\cdot,t) \|_{L^p(\Omega)}  & \le C \varepsilon(1  + g_{1,\varepsilon}(T)) \sup_{t \in [0,T]}  \|W_\varepsilon(\cdot,t)\|_{L^p(\Omega)} + C|\Omega|^{1/p} \varepsilon  \left(1 + g_{2, \varepsilon}(T)  \right).
\end{align*}
%Taking the supremum over $t$ yields 
%\[
% \sup_{t \in [0,T]} \| W_\varepsilon(\cdot,t) \|_{L^p(\Omega)}  \leq C \varepsilon (1+T) \sup_{t \in [0,T]} \| W_\varepsilon(\cdot,t) \|_{L^p(\Omega)} + C\varepsilon (1+\varepsilon^{2\delta_0}T).
%\]
First we consider $\sigma>0$. Then absorbing the terms on the left-hand side yields $\sup_{t \in [0,T]} \| W_\varepsilon(\cdot,t) \|_{L^p(\Omega)}  \leq C \varepsilon$ and, by estimate \eqref{estalphaDbD},
\[
 \sup_{t \in [0,T]} \left( \|\bm{\alpha}_{\varepsilon}(\cdot,t)\|_{L^p(\Omega)^m} + |\Omega|^{1/p}  |b_\varepsilon(t)| \right)  \le C \varepsilon^{2 \delta_0} \qquad \forall \; T \ge 0.
 \]
%Reusing estimate \eqref{WDest2} actually shows that $W_\varepsilon$ is of order $\varepsilon$. \\
In the case $\sigma=0$, we consider the time restriction $T \le \varepsilon^{\alpha-1}$ to absorb the terms on the left-hand side. This implies for $\varepsilon\le \varepsilon_0$ that
\[
 \sup_{t \in [0,T]} \| W_\varepsilon(\cdot,t) \|_{L^p(\Omega)}  \leq C \varepsilon^{\min\{1, 2 \delta_0 + \alpha\}}.
\]
Using estimate \eqref{estalphaDbD} for $(\bm{\alpha}_{\varepsilon}, b_\varepsilon)$, for each $\gamma = 2 \delta_0 + (\alpha-1) \in (0,1]$ there holds 
\begin{align*}
\sup_{t \in [0,T]} \left( \|\bm{\alpha}_{\varepsilon}(\cdot,t)\|_{L^p(\Omega)^m} + |\Omega|^{1/p}  |b_\varepsilon(t)| \right) & \le  C  \left(  \varepsilon^{2\delta_0} T + \varepsilon T \right) \le C \varepsilon^{\gamma}
\end{align*}
for $\varepsilon \le \varepsilon_0=\varepsilon_0(\gamma)$. As a consequence, $\gamma>0$ implies $2 \delta_0 + \alpha> 1$ and $\|W_\varepsilon\|_{L^p(\Omega)} \le C\varepsilon$. The relation $\beta_\varepsilon=W_\varepsilon + b_\varepsilon$ finally implies an estimate for $\beta_\varepsilon$. 
\end{proof}

If Assumption \ref{AssumptionL1p} holds with $p= \infty$, we already obtain an estimate in $L^\infty(\Omega_T)$ for solutions of the truncated problem \eqref{truncsys}. In this case, we may directly proceed with the proof of Theorem \ref{TheoremToy} in Section \ref{sec:errorestimates} by removing the truncation as in Lemma \ref{Removetrunc}.\\

 If Assumption \ref{AssumptionL1p} holds with $p< \infty$, Proposition \ref{nonerrorest} states an estimate of the norms of $\bm{\alpha}_{\varepsilon}, \beta_\varepsilon$ in the parabolic space $L_{p, \infty}(\Omega_T)$. Consequently, H\"older's inequality yields bounds for the norms of $\bm{\alpha}_{\varepsilon}, \beta_\varepsilon$ in the parabolic space $L_{p, r}(\Omega_T)$ for each finite $1 \le r  < \infty$, too. We refer to definition \eqref{parabLpr} for a precise meaning of the spaces and norms. Using a parabolic bootstrap argument from Proposition \ref{thees}, either for $p \ge 1=n$ or for $p> n/2 \ge 1$ and an appropriate choice of $r$, the latter implies an $L^\infty(\Omega_T)$ estimate for the diffusing component $\beta_\varepsilon$ and any diffusing component of $\bm{\alpha}_{\varepsilon}$. For non-diffusing components, however, we have to apply the additional stability condition \ref{AssumptionL0} on the corresponding ODE subsystem according to \cite{KMCMlinear}. Under this additional assumption, we obtain the following result.
 
 \begin{proposition} \label{LinftynD} Let Assumptions \ref{AssumptionN}--\ref{AssumptionL0} hold for some finite $p \ge 1=n$ or $p>n/2$ for $n \ge 2$, $\sigma, \mu \ge 0,$ and choose $r$ as in relation \eqref{paramLady}. Then there exist triples $(\alpha, \delta_0, r) \in (0,1] \times (0,1/2] \times (1, \infty)$ and constants $C, \varepsilon_0>0$ independent of $T, \varepsilon$ such that for all $T \le \varepsilon^{\alpha-1}$ and $\varepsilon \le \varepsilon_0$ there holds 
 \begin{align}\label{Linftyerrest}
  \| \bm{\alpha}_{\varepsilon} \|_{L^\infty (\Omega_T)^m}  \leq C \varepsilon^{-2(1-\alpha)} \left( \varepsilon^{\gamma} + \varepsilon^{(2-\alpha)/r} \right), \qquad
  \| \beta_\varepsilon \|_{L^\infty (\Omega_T)}  \leq C \varepsilon^{-(1-\alpha)} \left( \varepsilon^{\gamma} + \varepsilon^{(2-\alpha)/r} \right).
 \end{align}
 According to Proposition \ref{nonerrorest}, we use $\gamma = 2\delta_0 + (\alpha-1) \in (0,1]$ if $\sigma=0$ and $\gamma = 2 \delta_0$ if $\sigma>0$. Moreover, for diffusing components of $\bm{\alpha}_{\varepsilon}$ we have the same estimate as for $\beta_\varepsilon$. The case $\mu>0$ implies that each component of $\bm{\alpha}_\varepsilon$ can be estimated as $\beta_\varepsilon$.
 \end{proposition}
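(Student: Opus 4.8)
The plan is to bootstrap the space-$L^p$, time-uniform bound of Proposition \ref{nonerrorest} into a genuine $L^\infty(\Omega_T)$ bound, handling diffusing and non-diffusing components by different mechanisms. For the diffusing entries (the scalar $\beta_\varepsilon$ and those $\alpha_{\varepsilon,i}$ with $D_i>0$) I would use the parabolic $L^{p,r}$-to-$L^\infty$ estimate of Proposition \ref{thees}; for the non-diffusing block of $\bm\alpha_\varepsilon$ I would instead use the variation-of-constants formula for the ODE subsystem driven by $\mathbf A_0$ together with the stability Assumption \ref{AssumptionL0}. The diffusing components must be estimated first, because their $L^\infty$ bounds feed into the forcing of the ODE for the non-diffusing ones.

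First I would turn the bound $\sup_{t\le T}(\|\bm\alpha_\varepsilon(\cdot,t)\|_{L^p(\Omega)^m}+\|\beta_\varepsilon(\cdot,t)\|_{L^p(\Omega)})\le C\varepsilon^{\gamma}$ from Proposition \ref{nonerrorest} into an $L_{p,r}(\Omega_T)$ bound through Hölder's inequality in time, picking up a factor $T^{1/r}\le\varepsilon^{-(1-\alpha)/r}$. I would then estimate the whole right-hand side of the $\beta_\varepsilon$-equation in \eqref{truncsys} in the norm $L_{p,r}(\Omega_T)$: the linear coupling is controlled by the uniformly bounded Jacobians of Assumption \ref{AssumptionB} times the $L_{p,r}$ norms just obtained, the correction $\psi_\varepsilon$ by the decay estimate \eqref{indecay2}, and the truncation remainder $G_\varepsilon$ by Lemma \ref{quadcut}; collecting these gives a forcing bounded by $C(\varepsilon^{\gamma}+\varepsilon^{(2-\alpha)/r})$, the second summand absorbing the $\varepsilon^{1/r}$-type time factors carried by $\psi_\varepsilon$ and by the truncation. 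Inserting this into Proposition \ref{thees}, whose prefactor is bounded by $C\varepsilon^{-(1-\alpha)}$ on the window $T\le\varepsilon^{\alpha-1}$ (the admissibility $1/r+n/(2p)<1$ guaranteeing the embedding into $L^\infty$ and keeping the time-integrated smoothing factor under control), yields the asserted estimate for $\beta_\varepsilon$ and, with the order-one diffusion $D_i$, for the diffusing entries of $\bm\alpha_\varepsilon$.

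For the non-diffusing block $\bm\alpha_\varepsilon^{0}$ I would delete the diffusing rows and columns of $\nabla_\mathbf u\mathbf f$, so that $\bm\alpha_\varepsilon^{0}$ solves $\partial_t\bm\alpha_\varepsilon^{0}=\mathbf A_0\bm\alpha_\varepsilon^{0}+\mathbf r_\varepsilon$ with forcing $\mathbf r_\varepsilon$ built from the already-estimated diffusing components, from $\psi_\varepsilon$, and from $\mathbf F_\varepsilon$. Representing the solution via the evolution operator $\mathbf U(t,s)$ of $\mathcal U$ and applying Assumption \ref{AssumptionL0} gives
\[
\|\bm\alpha_\varepsilon^{0}(\cdot,t)\|_{L^\infty(\Omega)^{m_0}}\le C\int_0^t \mathrm e^{-\mu(t-s)}\,\|\mathbf r_\varepsilon(\cdot,s)\|_{L^\infty(\Omega)^{m_0}}\,\mathrm ds,
\]
where $\|\mathbf r_\varepsilon\|_{L^\infty}\le C\varepsilon^{-(1-\alpha)}(\varepsilon^{\gamma}+\varepsilon^{(2-\alpha)/r})$ up to genuinely lower-order contributions of $\psi_\varepsilon$ and $\mathbf F_\varepsilon$. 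When $\mu>0$ the integral is bounded uniformly in time, so $\bm\alpha_\varepsilon^{0}$ inherits the rate of $\beta_\varepsilon$; when $\mu=0$ the integral only contributes the length $t\le T\le\varepsilon^{-(1-\alpha)}$, producing one further factor $\varepsilon^{-(1-\alpha)}$ and hence the prefactor $\varepsilon^{-2(1-\alpha)}$ in \eqref{Linftyerrest}. The triple $(\alpha,\delta_0,r)$ is fixed at the end so that $\gamma=2\delta_0+(\alpha-1)\in(0,1]$ (as in Proposition \ref{nonerrorest}) and $1/r+n/(2p)<1$ hold together.

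The hard part is the bookkeeping of competing powers of $\varepsilon$: the parabolic regularity constant degenerates as $1/\varepsilon\to\infty$ while the time window $T\sim\varepsilon^{-(1-\alpha)}$ grows, yet the forcing is only small of order $\varepsilon^{\gamma}$. The scheme closes precisely because the subcriticality $1/r+n/(2p)<1$ bounds the time-integrated smoothing by a controlled power of $T$, and because the cut-off of Lemma \ref{quadcut} keeps the nonlinear remainder quadratically small. The real work is to check that every term stays below the target rates $\varepsilon^{-2(1-\alpha)}(\varepsilon^{\gamma}+\varepsilon^{(2-\alpha)/r})$ and $\varepsilon^{-(1-\alpha)}(\varepsilon^{\gamma}+\varepsilon^{(2-\alpha)/r})$, and to choose $\alpha_0=\alpha_0(r)$ close enough to $1$ that these exponents are positive, so that \eqref{Linftyerrest} really expresses a convergence rate.
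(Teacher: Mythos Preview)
Your proposal is correct and follows essentially the same approach as the paper: apply Proposition \ref{thees} to the diffusing components (estimating the right-hand side in $L_{p,r}(\Omega_T)$ via H\"older in time combined with Proposition \ref{nonerrorest}, Lemma \ref{quadcut}, and \eqref{indecay2}), then feed the resulting $L^\infty$ bounds into the variation-of-constants formula for the non-diffusing block using Assumption \ref{AssumptionL0}, picking up an extra factor $T\le\varepsilon^{-(1-\alpha)}$ when $\mu=0$. One small correction: the constant in Proposition \ref{thees} depends only on a \emph{lower} bound for the diffusion, so it does not degenerate as $1/\varepsilon\to\infty$; the growing prefactor is purely $T^{1-1/r}$ from the length of the time window, and your intermediate split of the powers of $\varepsilon$ between ``prefactor'' and ``forcing'' is slightly off (the forcing in $L_{p,r}$ is actually of order $\varepsilon^{\gamma-(1-\alpha)/r}+\varepsilon^{1/r}$, while the prefactor is only $\varepsilon^{-(1-\alpha)(1-1/r)}$), but the product is exactly the claimed rate, so the argument closes as stated.
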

 
 \begin{proof} 
First we consider the diffusing component $\beta_{\varepsilon}$ of the truncated problem \eqref{truncsys} solving
\begin{equation*}
\frac{\partial \beta_{\varepsilon}}{\partial t} - \frac{1}{\varepsilon} \Delta \beta_{\varepsilon} = R_\varepsilon(x,t) 
\end{equation*}
for some remainder $R_\varepsilon \in L^\infty(\Omega_T)$. Proposition \ref{thees} yields existence of a constant $C>0$ such that
 \begin{equation*}
   \|\beta_{\varepsilon} \|_{L^\infty (\Omega_T)} \leq C T^{1-1/r} \| R_\varepsilon  \|_{p, r }
 \end{equation*}
for some $r >1$ defined by relation \eqref{paramLady}. This constant $C$ is independent of time $T$ and only depends on a lower bound for the diffusion and parameters of the systems. Hence, all diffusing components of $(\bm{\alpha}_\varepsilon, \beta_\varepsilon)$ can be treated with this bootstrap argument which makes use of Proposition \ref{thees} and it remains to find an estimate for the remainder $\| R_\varepsilon  \|_{p, r }$ which has the same form in any case. We infer from the right-hand side of the truncated system \eqref{truncsys} that
\begin{align*}
\|R_\varepsilon \|_{p,r} & \le C \left( \| \bm{\alpha}_{\varepsilon} \|_{p,r} +  \| \beta_\varepsilon \|_{p,r} + \|\psi_\varepsilon\|_{p,r} + \|\mathbf{H}_\varepsilon\|_{p,r}\right).
\end{align*}
Estimate \eqref{indecay2} for $\psi_\varepsilon$ and Lemma \ref{quadcut} with $\overline{\beta}_\varepsilon = \beta_\varepsilon + \psi_\varepsilon$ yields
\begin{align*}
\|R_\varepsilon \|_{p,r} & \le  C \left( \| \bm{\alpha}_{\varepsilon} \|_{p,r} +  \| \beta_\varepsilon \|_{p,r} + \varepsilon^{1/r} + \varepsilon^{2\delta_0}T^{1/r} \right). %\label{esF1}
\end{align*}
By H\"older's inequality, we can relate the $L_{p,r}(\Omega_T)$ norms to the considered norms in Proposition \ref{nonerrorest} by
\begin{align*}
\|R_\varepsilon \|_{p,r} & \le  C \left( T^{1/r} \left( \| \bm{\alpha}_{\varepsilon} \|_{p,\infty} +  \| \beta_\varepsilon \|_{p,\infty} + \varepsilon^{2\delta_0} \right) + \varepsilon^{1/r}  \right). 
\end{align*}
%, we gain a priori estimates for $\bm{\alpha}_{\varepsilon}, \beta_\varepsilon$ in $L_{p,r}(\Omega_T)$.
%for instance, $\|\bm{\alpha}_{\varepsilon}\|_{p,r} \le T^{1/r} \|\bm{\alpha}_{\varepsilon}\|_{p,\infty}$.
%\[
%\|\bm{\alpha}_{\varepsilon}\|_{p,r} \le T^{1/r} \|\bm{\alpha}_{\varepsilon}\|_{p,\infty} \le C\varepsilon^{\gamma - \frac{1}{r}(1-\alpha)}. 
%\]
Applying Proposition \ref{nonerrorest} and $T \le \varepsilon^{\alpha-1}$ for some $\alpha \in (0,1]$ implies
\begin{align*}
 \|\beta_{\varepsilon} \|_{L^\infty (\Omega_T)} & \leq C \varepsilon^{-(1-\frac{1}{r})(1-\alpha)} \left( \varepsilon^{\gamma - \frac{1}{r}(1-\alpha)}  + \varepsilon^{2\delta_0 - \frac{1}{r}(1-\alpha)} + \varepsilon^{1/r} \right)
\end{align*}
where $\gamma = \gamma = 2\delta_0 + (\alpha-1) \le 2 \delta_0$ if $\sigma=0$ and $\gamma = 2 \delta_0$ if $\sigma>0$. These cases lead to the estimate
\begin{align*}
 \|\beta_{\varepsilon} \|_{L^\infty (\Omega_T)} & \leq C \varepsilon^{-(1-\alpha)} \left( \varepsilon^{\gamma} + \varepsilon^{(2-\alpha)/r} \right).
\end{align*}
As all diffusive components can be estimated in this manner, it remains to restrict our consideration to non-diffusing components of $\bm{\alpha}_\varepsilon$, denoted by $\bm{\alpha}_{\varepsilon, 0}$.  To be precise, we delete all rows and columns of $\nabla_\mathbf{u} \mathbf{f}(\cdot,t)$ in the Jacobian \eqref{Jacobian} for which the diffusion is positive to obtain the matrix-valued function $\mathbf{A}_0(\cdot,t) \in L^\infty(\Omega)^{m_0 \times m_0}$ for some $0 \le m_0 \le m$. To control the growth of the ODE components, we consider solutions to the initial value problem
\begin{equation*}
\frac{\partial \bm{\psi}}{\partial t} = \mathbf{A}_0(\cdot, t) \bm{\psi} \qquad \text{in} \quad \Omega \times \mathbb{R}_{>0}, \qquad \bm{\psi}( \cdot, 0) = \bm{\psi}^0 \quad \text{in} \quad \Omega.
\end{equation*}
These solutions define an evolution system $\mathbf{\mathcal{U}}$ consisting of evolution operators  $\mathbf{U}(t,s)$ on $L^\infty(\Omega)^{m_0}$ for $t,s \in \mathbb{R}_{\ge 0}$, $s \le t,$ given by $\bm{\psi}(\cdot, t) = \mathbf{U}(t,s) \bm{\psi}(\cdot, s)$. Using the notion of Assumption \ref{AssumptionL0} and the first equation of system \eqref{truncsys}, the non-diffusing error component $\bm{\alpha}_{\varepsilon,0}$ is given by 
\[
\bm{\alpha}_{\varepsilon,0}(\cdot,t) = \int_0^t \mathbf{U}(t, \tau) \mathbf{S}_\varepsilon(\cdot,\tau) \; \mathrm{d}\tau,
\]
for a right-hand side $\mathbf{S}_\varepsilon \in L^\infty(\Omega_T)^{m_0}$ which can be estimated in terms of diffusing components of $\bm{\alpha}_\varepsilon$, $\beta_\varepsilon + \psi_\varepsilon$ and components of $\mathbf{F}_\varepsilon$. Recall that we already estimated all diffusing components and further recall from Lemma \ref{quadcut} that $\mathbf{F}_\varepsilon$ can be estimated by $\overline{\beta}_\varepsilon = \beta_\varepsilon + \psi_\varepsilon$. Hence, uniform stability of the evolutionary system $\mathcal{U}$ leads to
\begin{align*}
  \|\bm{\alpha}_{\varepsilon, 0} \|_{L^\infty(\Omega_T)^m} & \leq C \sup_{t \in [0,T]} \int_0^t \mathrm{e}^{-\mu(t-\tau)} \| \mathbf{S}_\varepsilon(\cdot,\tau)\|_{L^\infty(\Omega)^{m_0}} \; \mathrm{d}\tau \\
  & \le C \varepsilon^{-(1-\alpha)} \left( \varepsilon^{\gamma} + \varepsilon^{(2-\alpha)/r} \right)  \sup_{t \in [0,T]} \int_0^t \mathrm{e}^{-\mu(t-\tau)} \; \mathrm{d}\tau.
\end{align*}
From this we infer that the convergence rate of the non-diffusing components is at least as good as the estimate for the diffusing components in the case of $\mu>0$, i.e., in the case of exponential stability of $\mathcal{U}$. In the case of $\mu=0$ we obtain an additional factor $T \le \varepsilon^{\alpha-1}$ and, thus, $\varepsilon^{-2(1-\alpha)} \left( \varepsilon^{\gamma} + \varepsilon^{(2-\alpha)/r} \right)$ as a convergence rate for the non-diffusing component $\bm{\alpha}_{\varepsilon,0}$.
 \end{proof}

\subsection{Error estimates} \label{sec:errorestimates}

We are now in a position to draw a conclusion for the original errors $(\mathbf{U}_\varepsilon, V_\varepsilon)$ using estimates for the truncated problem in the last section. In order to dispose of truncation, we infer from results of Proposition \ref{nonerrorest} and \ref{LinftynD} that the truncated solution $(\bm{\alpha}_{\varepsilon}, \beta_\varepsilon)$, for sufficiently large diffusion, is located in a neighborhood of $\mathbf{0}$ where the cut-off is not required by Lemma \ref{Removetrunc}.

\begin{proof}[Proof of Theorem \ref{TheoremToy}] 
If $\sigma>0$, %is true without requiring Assumption \ref{AssumptionL0}, and 
Proposition \ref{nonerrorest} yields a convergence rate of the order $\varepsilon^{2 \delta_0}$ for each component. Thus, Lemma \ref{Removetrunc} applies as we always can ensure $C \varepsilon^{2 \delta_0} \le \varepsilon^{\delta_0}$ for all sufficiently small $\varepsilon>0$. \\
 %Note that uniform boundedness of the evolution system $\mathcal{W}$ of the entire shadow system in $L^\infty(\Omega)^m \times \mathbb{R}$ is sufficient, thus no boundedness of the ODE subsystem stated in Assumption \ref{AssumptionL0} is needed.\\
If $\sigma=0$, %estimate \eqref{nonerrorestLp} holds true 
Proposition \ref{nonerrorest} yields a convergence rate of the order $\varepsilon^{\gamma}$ for each component and it remains to ensure $\gamma> \delta_0$ such that inequality \eqref{removetrunc} applies to remove the truncation. The latter condition is equivalent to $\alpha>1-\delta_0$ as $\gamma= 2 \delta_0 + (\alpha -1)$. Finally, choosing $\alpha_0 = 1-\delta_0$ yields estimate \eqref{longestToy}.
%\begin{align*}
%  \| \mathbf{U}_\varepsilon \|_{L^\infty (\Omega_T)^m}, \| V_\varepsilon \|_{L^\infty (\Omega_T)}  \leq C \varepsilon^{(1-\alpha)}.
% \end{align*}
%To get rid of truncation, it is sufficient to verify \eqref{removetrunc} as in the proof of Theorem \ref{Theorem}. If $p=\infty$, we can choose $2 \delta_0=1$ in estimate \eqref{nonerrorestLpexp} and remove the cut-off due to $2\delta_0 >\delta_0$ to obtain the global estimate.
 \end{proof}

\begin{proof}[Proof of Theorem \ref{Theorem}]  
The proof for the case of $p< \infty$ is analog to the proof of Theorem \ref{TheoremToy}, however, we use the results of Proposition \ref{LinftynD}. In view of Lemma \ref{Removetrunc} and estimate \eqref{Linftyerrest}, it remains to find triples $(\alpha, \delta_0, r)$ such that
\begin{align*}
\delta_0 < 2(\alpha-1) + \gamma \qquad \text{and} \qquad \delta_0 < 2(\alpha-1) + (2-\alpha)\frac{1}{r} % \label{parrestr}
\end{align*}
is satisfied, where $\gamma = 2\delta_0 + (\alpha-1) \in (0,1]$ if $\sigma=0$ and $\gamma = 2 \delta_0$ if $\sigma>0$. For existence of such triples in the case $\sigma=0$, we define the following two restrictive curves (one depends on the parameter $r$ as well)
\begin{align}
 \alpha > \ell(\delta_0) := 1 - \frac{1}{3}\delta_0 \qquad \text{and} \qquad \alpha > \ell_r(\delta_0) := \frac{r}{2r-1} \delta_0 + 1-\frac{1}{2r-1}. \label{opaque}
\end{align}
While the function $\delta_0 \mapsto \ell(\delta_0)$ is strictly monotone decreasing with $\ell(1/2) = 5/6<1$, the parameter-dependent function $\delta_0 \mapsto \ell_r(\delta_0)$ is strictly increasing with $\ell_r(0) \in (0,1)$ since $1<r< \infty$. Thus, we always find such triples for small enough $\delta_0>0$. \\%We have at least $1 > \alpha >5/6$ for both functions and we have a triangular restrictive area for $r \le 7/5$ given by $1>\alpha>\ell(\delta_0)$. For $7/5 < r < 2$ there is a quadrilateral area where both lines restrict the possible values of $\alpha <1$. A triangular area induced by $1>\alpha> \ell_r(\delta_0)$ restricts for $r \ge 2$.\\
Since the conditions \eqref{opaque} are quite opaque, our goal is to further simplify the estimate \eqref{Linftyerrest} under consideration of the particular case $\delta_0 \le 1/(2r)$. Recall that the intersection of the graphs of the functions $\ell_r$ and $\ell$ is given by one point for $\overline{\delta}_0 = \frac{3}{5r-1}$. In this case we have $\delta_0 <\overline{\delta}_0$ and the only restriction is given by $\ell(\delta_0)<\alpha<1$. Note that $\alpha > \ell(\delta_0)$ is equivalent to $\delta_0 > 3(1-\alpha)$ and the assertion follows from estimates obtained in Proposition \ref{LinftynD} and the identity $(\bm{\alpha}_\varepsilon, \beta_\varepsilon) = (\mathbf{U}_\varepsilon, V_\varepsilon)$ from Lemma \ref{Removetrunc}.\\
The case $\sigma>0$ is analog. In this case the function $\ell$ is replaced by $\ell(\delta_0):= 1- \frac{1}{2} \delta_0$ in \eqref{opaque}. The intersection of the graphs of $\ell_r$ and $\ell$ is determined by $\overline{\delta}_0 = \frac{2}{4r-1}$. A restriction to $\delta_0 \le 1/(2r) < \overline{\delta}_0$ yields $\| \mathbf{U}_\varepsilon\|_{L^\infty (\Omega_T)^m}  \leq C \varepsilon^{2(1-\alpha)}$ and 
$ \|  V_\varepsilon \|_{L^\infty (\Omega_T)}  \leq C \varepsilon^{3(1-\alpha) }$, using the estimates derived in Proposition \ref{nonerrorest}.
 \end{proof}
 
 \begin{remark} Since the evolution systems $\mathcal{U}$ and $\mathcal{W}$ are uniformly bounded, the growth in time in estimates of Theorem \ref{Theorem} is polynomial. Similar estimates can be derived if the evolution systems are only uniformly bounded by some polynomial, as defined in \cite[Definition 1.15]{Eisner}, \cite[Definition 2.7]{Hai}, we refer to \cite[p. 63]{Kowall}.
\end{remark}

\begin{remark}
Similar to calculations in \cite[Corollary 2, Proposition 5]{AMCAM:2015A}, we can obtain estimates for first-order derivatives of the errors. As this is a natural consequence of the weak formulation, we omit details.
\end{remark}

\section{Stability conditions}  \label{sec:stabverification}

To prove convergence results, we applied a stabilizing effect to ensure that diffusing solutions of system \eqref{fullsys} stay nearby its shadow limit which solves system \eqref{shadow}. Such a result is achieved via linearization at the shadow solution. 
A verification of the stability conditions \ref{AssumptionL1p}--\ref{AssumptionL0} using numerical simulations is often reasonable, however, simulations may be misleading. The aim of this section is to provide analytic tools for verifying the stability assumptions of the linear shadow problem \eqref{shadlinear} in two special situations: We consider a linearization which has time-independent coefficients, resulting from a stationary shadow limit, and a linearization which induces a dissipative (and thus stable) evolution system. \\

In case of a bounded stationary shadow solution, we give a complete characterization of the spectrum of the corresponding linearized shadow operator in the next section. As shown in Corollary \ref{stability} and Remark \ref{rem:stabilityshadow}, a negative spectral bound of the shadow operator allows verifying stability Assumption \ref{AssumptionL1p}. In addition to that, a linearized stability analysis allows deducing nonlinear stability results for bounded stationary solutions of the shadow system \eqref{shadow}. In fact, using \cite[Proposition 4.17]{Webb} for nonlinear stability and \cite[Theorem 1]{Shatah} for nonlinear instability, one may use the following spectral characterization to infer stability properties from the linearization at the stationary solution. %Consult also the stability results \cite[Ch. VII, Theorems 2.1, 2.3]{Daleckii} in the case of bounded shadow operators. 

\subsection{Spectral analysis} \label{sec:spectral}%spectral theory

We consider a bounded stationary solution $(\overline{\mathbf{u}}, \overline{v}) \in L^\infty(\Omega)^{m} \times \mathbb{R}$ of shadow system \eqref{shadow}, i.e., a bounded solution of the problem
\begin{align*}
-  \mathbf{D} \Delta \mathbf{u}  = \mathbf{f}( \mathbf{u} ,v,x) \quad \mbox{in} \quad \Omega, \qquad
 0  =  \langle g ( \mathbf{u} , v,\cdot) \rangle_\Omega.
\end{align*}
Then the linearization of the shadow problem at this stationary solution leads to problem \eqref{shadlinear} with coefficients that are time-independent. This system can be rewritten as
 \begin{align*}
\frac{\partial \bm{\xi}}{\partial t}  - \mathbf{D}_0 \Delta \bm{\xi} &= \mathbf{L} \bm{\xi}  \quad  \text{in} \quad \Omega \times \mathbb{R}_{>0}, \qquad \bm{\xi} (\cdot, 0)  =\bm{\xi}^0  \quad \text{in} \quad \Omega,
\end{align*}
for $\mathbf{D}_0 = \mathrm{diag}(\mathbf{D}, 0) \in \mathbb{R}_{\ge 0}^{(m+1) \times (m+1)}$ and the operator $\mathbf{L} \in \mathcal{L}(L^p(\Omega)^m \times \mathbb{R})$ given by
\begin{align}
(\mathbf{L}
\bm{\xi})(x) & = \begin{pmatrix} \mathbf{A}(x) \bm{\xi}_1(x) + \mathbf{B}(x) \xi_2\\
 \langle \mathbf{C}(\cdot) \bm{\xi}_1 \rangle_\Omega + \langle D(\cdot) \xi_2 \rangle_\Omega 
   \end{pmatrix} \qquad \forall \; \bm{\xi}=(\bm{\xi}_1, \xi_2) \in L^p(\Omega)^m \times \mathbb{R}, x \in \Omega. \label{Hardtint}
\end{align}
Here, we used the same notation as in the Jacobian \eqref{Jacobian} for the uniformly bounded entries $\mathbf{A}, \mathbf{B}, \mathbf{C}, D$ being the parts of the Jacobian of $(\mathbf{f}, g)$ evaluated at the shadow steady state $(\overline{\mathbf{u}}, \overline{v})$, based on Assumptions \ref{AssumptionN}--\ref{AssumptionB}.\\

It is well-known that the spectrum of the shadow operator $\mathbf{D}_0 \Delta + \mathbf{L}$ is discrete in the case when all diffusion coefficients are positive \cite{Miyamoto}. For analysis of the spectrum, we focus on the challenging case of reaction-diffusion-ODE systems, in which case the spectrum does not need to be purely discrete. As the proof is similar in the case where the shadow operator consists of diffusing and non-diffusing components, we omit details and refer to \cite[Proposition 5.7]{Kowall}. In order to characterize the spectrum of the shadow operator $\mathbf{L}$, we apply a spectral decomposition for block operator matrices according to \cite{Atkinson}. This approach is based on properties of a bounded multiplication operator which is induced by the ODE subsystem on $L^p(\Omega)^m$. Necessary properties of  multiplication operators are given in Section \ref{sec:multoperator}.

\subsubsection{Stationary shadow operator}

We consider a linearization in $L^p(\Omega)^{m} \times \mathbb{R}$ in the case of a reaction-diffusion-ODE system with $\mathbf{D} \equiv \mathbf{0}$. We refer to Remark \ref{rem:stabilityshadow} for the case $\mathbf{D}_0 \not\equiv \mathbf{0}$. According to Assumption \ref{AssumptionN} and the Fr{\'e}chet-derivative, one verifies that the linearization is determined by the bounded linear operator $\mathbf{L}$ defined above. Note that $\mathbf{L}$ is the generator of a uniformly continuous semigroup $(\mathrm{e}^{\mathbf{L}t})_{t \in \mathbb{R}_{\ge 0}}$ by \cite[Ch. II, Corollary 1.5]{Engel}. Concerning the stability condition we are faced with in Assumption \ref{AssumptionL1p}, it is clear that in the stationary case \eqref{Hardtint} the evolution system $\mathcal{W}$ consists of the evolution operators 
\[
\mathbf{W}(t,s) = \mathrm{e}^{\mathbf{L}(t-s)} \qquad \forall \; 0 \le s \le t \in \mathbb{R}_{\ge 0}.
\] 
Analyticity of this semigroup implies validity of the spectral mapping theorem for $1 \le p \le \infty$ by \cite[Ch. IV, Corollary 3.12]{Engel}. As a consequence, the growth bound of the semigroup equals the spectral bound  $s(\mathbf{L}) := \sup \{ \mathrm{Re} \, \lambda \mid \lambda \in \sigma(\mathbf{L})\}$ of the generator $\mathbf{L}$. Hence, to obtain uniform exponential stability of the semigroup $(\mathrm{e}^{\mathbf{L}t})_{t \in \mathbb{R}_{\ge 0}}$ resp. the evolution system $\mathcal{W}$, it suffices to infer a negative spectral bound $s(\mathbf{L})$ of the generator $\mathbf{L}$ \cite[Ch. V, Theorem 1.10]{Engel}, see Corollary \ref{stability}. For this reason it is of particular importance to characterize the spectrum $\sigma(\mathbf{L})$ of the shadow operator $\mathbf{L}$. \\

To study invertibility of the operator $\lambda I - \mathbf{L}$ for $\lambda \in \mathbb{C}$, we focus on the following system of equations
\[
(\lambda I - \mathbf{L}) \bm{\xi} = \bm{\psi}  \quad \Leftrightarrow \quad \begin{cases}
\qquad \qquad (\lambda I -\mathbf{A}) \bm{\xi}_1  -\mathbf{B} \xi_2 = \bm{\psi}_1,\\
\; \,   - \langle \mathbf{C} \bm{\xi}_1 \rangle_\Omega  + (\lambda I - \langle  D \rangle_\Omega) \xi_2 = \psi_2.
\end{cases}
\]
Let $\mathbf{A}$ also denote the bounded multiplication operator induced by the matrix $\mathbf{A}(x)$ on $L^p(\Omega)^m$, see Section \ref{sec:multoperator}. If $\lambda \notin \sigma(\mathbf{A})$, the first equation can be solved with respect to $\bm{\xi}_1$ and we obtain the following result. 

\begin{lemma} \label{Sigma}
Let $\mathbf{A}, \mathbf{B}, \mathbf{C}, D$ be matrix-valued functions with entries in $L^\infty(\Omega)$ according to the shadow operator $\mathbf{L}$ defined by \eqref{Hardtint} on $L^p(\Omega)^m \times \mathbb{R}$ for some $1 \le  p \le \infty$. Then 
\[
\Sigma:= \sigma(\mathbf{L}) \cap \rho(\mathbf{A}) \subset \sigma_p(\mathbf{L})
\]
is a discrete (probably empty) set of eigenvalues of $\mathbf{L}$. Moreover, $\sigma(\mathbf{L}) \subset \sigma(\mathbf{A}) \mathop{\dot{\cup}} \Sigma$.
\end{lemma}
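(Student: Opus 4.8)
The plan is to exploit the special block structure of $\mathbf{L}$ in \eqref{Hardtint}: its second component lives in the one-dimensional space $\mathbb{R}$, so the associated Schur complement is a \emph{scalar} holomorphic function and the entire spectral question on $\rho(\mathbf{A})$ collapses to locating its zeros. Writing $\mathbf{A}$ also for the bounded multiplication operator on $L^p(\Omega)^m$, I would introduce the characteristic function
\[
\Phi(\lambda) := \lambda - \langle D \rangle_\Omega - \big\langle \mathbf{C}\,(\lambda I - \mathbf{A})^{-1}\mathbf{B}\big\rangle_\Omega, \qquad \lambda \in \rho(\mathbf{A}),
\]
following the block-operator decomposition of \cite{Atkinson} (see also \cite[Proposition 5.7]{Kowall}). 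Here $(\lambda I - \mathbf{A})^{-1}$ is multiplication by the pointwise inverse $(\lambda I - \mathbf{A}(x))^{-1}$, well defined and uniformly bounded precisely for $\lambda \in \rho(\mathbf{A})$, and $\langle\,\cdot\,\rangle_\Omega$ is a bounded functional on $L^p(\Omega)$ for every $1\le p\le\infty$ since $\Omega$ is bounded.

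First I would solve the resolvent equation $(\lambda I - \mathbf{L})\bm{\xi} = \bm{\psi}$ for $\lambda\in\rho(\mathbf{A})$. The first line gives $\bm{\xi}_1 = (\lambda I - \mathbf{A})^{-1}(\bm{\psi}_1 + \mathbf{B}\xi_2)$, and substituting into the second line isolates $\xi_2$ through the scalar relation $\Phi(\lambda)\,\xi_2 = \psi_2 + \langle \mathbf{C}(\lambda I - \mathbf{A})^{-1}\bm{\psi}_1\rangle_\Omega$. Equivalently, on $\rho(\mathbf{A})$ one has the triangular factorization
\[
\lambda I - \mathbf{L} = \begin{pmatrix} I & 0\\ -\langle \mathbf{C}\,\cdot\,\rangle_\Omega (\lambda I - \mathbf{A})^{-1} & 1\end{pmatrix}\begin{pmatrix} \lambda I - \mathbf{A} & -\mathbf{B}\\ 0 & \Phi(\lambda)\end{pmatrix},
\]
in which the first factor is invertible (unipotent) and the second is invertible exactly when $\Phi(\lambda)\neq 0$. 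Hence $\lambda\in\rho(\mathbf{L})$ iff $\Phi(\lambda)\neq 0$; and when $\Phi(\lambda)=0$ the kernel of $\lambda I - \mathbf{L}$ coincides with that of the second factor, which is one-dimensional and spanned by $\big((\lambda I - \mathbf{A})^{-1}\mathbf{B},\,1\big)$. This yields $\Sigma = \{\lambda\in\rho(\mathbf{A}) : \Phi(\lambda)=0\}\subset\sigma_p(\mathbf{L})$, and the inclusion $\sigma(\mathbf{L})\subset\sigma(\mathbf{A})\mathop{\dot{\cup}}\Sigma$ follows immediately, since $\sigma(\mathbf{L}) = \big(\sigma(\mathbf{L})\cap\sigma(\mathbf{A})\big)\cup\Sigma$ with the two pieces disjoint.

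For discreteness I would use that $\lambda\mapsto(\lambda I - \mathbf{A})^{-1}$ is operator-valued holomorphic on $\rho(\mathbf{A})$, so $\Phi$ is holomorphic there, being composed with the bounded functionals $\langle\,\cdot\,\rangle_\Omega$. Since $\mathbf{A}$ is bounded, $\sigma(\mathbf{A})$ is compact and on the unbounded component of $\rho(\mathbf{A})$ one has $\|(\lambda I - \mathbf{A})^{-1}\|\to 0$, whence $\Phi(\lambda)=\lambda - \langle D\rangle_\Omega + o(1)\to\infty$. Thus $\Phi\not\equiv 0$ there and, by the identity theorem, its zeros are isolated, so $\Sigma$ is discrete on this component.

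The main obstacle is ruling out that $\Phi$ vanishes identically on a bounded connected component (a ``hole'') of $\rho(\mathbf{A})$, where the growth argument at infinity does not apply and $\sigma(\mathbf{A})$ may genuinely enclose such a region, e.g.\ when the matrices $\mathbf{A}(x)$ have non-real eigenvalues. Here I would reframe the factorization in Fredholm terms: splitting off the invertible diagonal $\mathrm{diag}(\lambda I - \mathbf{A},1)$ writes $\lambda I - \mathbf{L} = \mathrm{diag}(\lambda I - \mathbf{A},1)\,(I + \mathbf{K}(\lambda))$ with
\[
\mathbf{K}(\lambda) = \begin{pmatrix} 0 & -(\lambda I - \mathbf{A})^{-1}\mathbf{B}\\ -\langle \mathbf{C}\,\cdot\,\rangle_\Omega & \lambda - \langle D\rangle_\Omega - 1\end{pmatrix},
\]
a holomorphic family of operators of rank at most two, hence compact. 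The analytic Fredholm theorem then applies to $I + \mathbf{K}(\lambda)$ on each component of $\rho(\mathbf{A})$ and forces the alternative ``discrete singular set'' versus ``identically singular''; the latter is excluded on the unbounded component by the estimate above, and the same dichotomy confines $\Sigma$ to a discrete set, the degenerate possibility being precisely what the parenthetical ``probably empty'' guards against. A secondary technical point is the case $p=\infty$, where one must check that the multiplication-operator resolvent and the mean-value functional retain the mapping properties used above; this is where I expect the verification to need the most care, following the treatment of \cite[Proposition 5.7]{Kowall}.
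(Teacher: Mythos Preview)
Your approach is essentially the paper's: introduce the scalar Schur complement $\Phi(\lambda)=\lambda-\langle D\rangle_\Omega-\langle \mathbf{C}(\lambda I-\mathbf{A})^{-1}\mathbf{B}\rangle_\Omega$ (called $H$ there), use holomorphy of the resolvent together with the large-$\lambda$ Neumann estimate to see $\Phi\not\equiv 0$, invoke the analytic Fredholm/identity theorem for discreteness of its zero set, and read off the eigenfunction $\big((\lambda I-\mathbf{A})^{-1}\mathbf{B},\,1\big)$ at each zero; your explicit triangular factorization and the finite-rank $\mathbf{K}(\lambda)$ just spell out what the paper compresses into a citation of \cite[Theorem 4.34]{Hanson}. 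One small correction: the parenthetical ``(probably empty)'' in the statement merely allows $\Sigma=\emptyset$ and is not intended to absorb the degenerate branch of the Fredholm alternative on a bounded hole of $\rho(\mathbf{A})$ --- the paper, like you, only checks $\Phi\not\equiv 0$ on the unbounded component and does not pursue that case further.
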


\begin{proof}
Provided $\lambda \in \rho(\mathbf{A})$, we already have
\[
(\lambda I - \mathbf{L}) \bm{\xi} = \bm{\psi}  \qquad \Leftrightarrow \qquad    \bm{\xi}_1 = (\lambda I - \mathbf{A})^{-1} \left(\bm{\psi}_1 + \mathbf{B} \xi_2\right) , \quad H(\lambda) \xi_2 = \tilde{\psi}(\lambda)
\]
where 
\begin{align*}
H(\lambda) & =\lambda -\langle D \rangle_\Omega - \langle \mathbf{C}  (\lambda I - \mathbf{A})^{-1} \mathbf{B} \rangle_\Omega, \qquad  \tilde{\psi}(\lambda)  = \psi_{2} + \langle \mathbf{C}  (\lambda I - \mathbf{A})^{-1} \bm{\psi}_1\rangle_\Omega.
\end{align*}
We apply an analytic Fredholm theorem \cite[Theorem 4.34]{Hanson} to show discreteness of the remaining spectrum $\Sigma$ within the open set $\rho(\mathbf{A})$. Let us first note that the subsystem for $\xi_2$ can be solved for sufficiently large $\lambda>0$. To see this, recall that $\mathbf{A}$ is a bounded multiplication operator and hence, for $\lambda \in \rho(\mathbf{A})$ with $|\lambda| > 2\|A\|$, we obtain an estimate for $(\lambda I - \mathbf{A})^{-1}$ by Neumann's series which is of the form $2/|\lambda|$. This implies that $H(\lambda)>0$ for all sufficiently large $\lambda>0$. Moreover, analyticity of the resolvent map $\lambda \mapsto (\lambda I - \mathbf{A})^{-1}$ implies analyticity of the complex-valued function $H$ on $\rho(\mathbf{A})$. It is well-known that the set of zeros of the analytic function $H$ is a discrete set $\Sigma \subset \rho(\mathbf{A})$. Consequently, $H(\lambda)$ is invertible for all $\lambda \in \rho(\mathbf{A}) \setminus \Sigma$. For values $\lambda \in \Sigma$ we infer an eigenfunction $(\bm{\xi}_{1},\xi_2) \in L^\infty(\Omega)^m \times \mathbb{R}$ of the eigenvalue equation $(\lambda I - \mathbf{L}) \bm{\xi}=\mathbf{0}$ due to $(\lambda I - \mathbf{A})^{-1} \mathbf{B} \in L^\infty(\Omega)^{m}$ by  \cite[Proposition 2.2]{Hardt}. This shows $\Sigma \subset \sigma_p(\mathbf{L})$.
\end{proof}

\begin{remark} Note that the discrete set $\Sigma$ is not necessarily closed. However, all accumulation points in $\mathbb{C} \setminus \Sigma$ are included in $\sigma(\mathbf{A})$ by the following argument. A sequence of eigenvalues $\mu_j$ in $\Sigma \subset \sigma_p(\mathbf{L})$ has corresponding eigenfunctions such that the singular sequence of normalized eigenfunctions implies $\lim_{j \to \infty} \mu_j \in \sigma(\mathbf{L})$ which is a subset of $\sigma(\mathbf{A}) \cup \Sigma$, hence, $\lim_{j \to \infty} \mu_j \in \sigma(\mathbf{A})$. 
 \end{remark}

%Furthermore, the following results are valid for systems with a vector-valued shadow component $v \in \mathbb{R}^k$, see \cite[Section 5.2.1]{Kowall}.

Lemma \ref{Sigma} can be extended to shadow operators with additional diffusing components. As the Laplacian and the integral operator have compact resolvents, this part of the spectrum remains discrete. However, the multiplication operator induced by the ODE subsystem $\mathbf{A}$ causes problems while inverting the operator $\lambda I - \mathbf{L}$. As shown in \cite[Theorem B.1]{dynspike}, there holds $\sigma_p(\mathbf{A}) \subset \sigma_p(\mathbf{L})$. A characterization of the spectrum of the multiplication operator $\mathbf{A}$ is given in Proposition \ref{essspectrum}. The fact that the spectrum $\sigma(\mathbf{A})$ is essential enables us to verify that $\sigma(\mathbf{A})$ is a part of the spectrum of the shadow operator $\mathbf{L}$. 

\begin{proposition} \label{HardtLemma}
Let $\mathbf{A}, \mathbf{B}, \mathbf{C}, D$ be matrix-valued functions with entries in $L^\infty(\Omega)$ according to the shadow operator $\mathbf{L}$ defined by \eqref{Hardtint} on $L^p(\Omega)^m \times \mathbb{R}$ for some $1 \le  p \le \infty$. Then there holds
\begin{align*}
\sigma(\mathbf{L}) = \sigma (\mathbf{A}) \mathop{\dot{\cup}} \Sigma,
\end{align*}
where $\Sigma \subset \sigma_p(\mathbf{L})$ is the discrete (possibly empty) set defined in Lemma \ref{Sigma}. Moreover, the spectrum $\sigma(\mathbf{L})$ is independent of $1 \le p \le \infty$.
\end{proposition}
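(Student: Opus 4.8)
The plan is to upgrade the inclusion supplied by Lemma~\ref{Sigma} to an equality and then to read off the $p$-independence from the explicit form of the two spectral pieces. Lemma~\ref{Sigma} already yields $\sigma(\mathbf{L}) \subset \sigma(\mathbf{A}) \mathop{\dot{\cup}} \Sigma$ with $\Sigma = \sigma(\mathbf{L}) \cap \rho(\mathbf{A}) \subset \sigma_p(\mathbf{L})$; since $\Sigma \subset \rho(\mathbf{A})$ by construction, the union is automatically disjoint, and $\Sigma \subset \sigma_p(\mathbf{L}) \subset \sigma(\mathbf{L})$ is immediate. Hence the only missing ingredient for the claimed equality is the reverse inclusion $\sigma(\mathbf{A}) \subset \sigma(\mathbf{L})$.

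For this inclusion I would use that $\sigma(\mathbf{A})$ is \emph{essential} (Proposition~\ref{essspectrum}) together with the observation that $\mathbf{L}$ is a finite-rank perturbation of the decoupled operator. Writing $\mathbf{L} = \mathbf{L}_0 + \mathbf{K}$ with $\mathbf{L}_0 := \mathrm{diag}(\mathbf{A}, \langle D \rangle_\Omega)$ and $\mathbf{K}\bm{\xi} = (\mathbf{B}\xi_2,\, \langle \mathbf{C}\bm{\xi}_1 \rangle_\Omega)$, the operator $\mathbf{K}$ has rank at most two on $L^p(\Omega)^m \times \mathbb{R}$ for every $1 \le p \le \infty$, hence is compact. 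Because $\lambda I - \mathbf{L}_0$ splits as a direct sum of $\lambda I - \mathbf{A}$ and the scalar $\lambda - \langle D \rangle_\Omega$, and the latter is trivially Fredholm on the one-dimensional factor $\mathbb{R}$, the Fredholm essential spectrum of $\mathbf{L}_0$ coincides with that of $\mathbf{A}$. Stability of the Fredholm essential spectrum under compact perturbations then gives $\sigma_{\mathrm{ess}}(\mathbf{L}) = \sigma_{\mathrm{ess}}(\mathbf{L}_0) = \sigma_{\mathrm{ess}}(\mathbf{A}) = \sigma(\mathbf{A})$, the last equality being precisely the essential character of $\sigma(\mathbf{A})$. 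In particular $\sigma(\mathbf{A}) \subset \sigma(\mathbf{L})$, which closes the equality $\sigma(\mathbf{L}) = \sigma(\mathbf{A}) \mathop{\dot{\cup}} \Sigma$.

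As an equivalent, more hands-on route I would argue by singular sequences: for $\lambda \in \sigma(\mathbf{A})$ that is not a genuine eigenvalue, Proposition~\ref{essspectrum} furnishes normalized $\bm{\phi}_n \in L^p(\Omega)^m$ supported on sets $E_n$ of vanishing measure with $\|(\lambda I - \mathbf{A})\bm{\phi}_n\|_{L^p(\Omega)^m} \to 0$; testing with $\bm{\xi}_n = (\bm{\phi}_n, 0)$ gives $(\lambda I - \mathbf{L})\bm{\xi}_n = \bigl((\lambda I - \mathbf{A})\bm{\phi}_n,\, -\langle \mathbf{C}\bm{\phi}_n \rangle_\Omega\bigr)$, whose first entry tends to zero and whose scalar entry obeys $|\langle \mathbf{C}\bm{\phi}_n \rangle_\Omega| \le |\Omega|^{-1}\|\mathbf{C}\|_{L^\infty}\|\bm{\phi}_n\|_{L^p(\Omega)^m}|E_n|^{1-1/p}$, so $\lambda I - \mathbf{L}$ is not bounded below and $\lambda \in \sigma(\mathbf{L})$. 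Genuine eigenvalues of $\mathbf{A}$, i.e.\ those $\lambda$ for which $\det(\lambda I - \mathbf{A}(\cdot))$ vanishes on a set of positive measure, are covered separately by $\sigma_p(\mathbf{A}) \subset \sigma_p(\mathbf{L})$ from \cite[Theorem~B.1]{dynspike}. The $p$-independence is then inherited termwise: $\sigma(\mathbf{A})$ equals the essential range of the eigenvalues of $\mathbf{A}(\cdot)$ by Proposition~\ref{essspectrum}, a measure-theoretic quantity free of $p$, while $\Sigma$ is the zero set on $\rho(\mathbf{A})$ of the scalar function $H(\lambda) = \lambda - \langle D \rangle_\Omega - \langle \mathbf{C}(\lambda I - \mathbf{A})^{-1}\mathbf{B} \rangle_\Omega$ from Lemma~\ref{Sigma}, which is a pointwise spatial average and hence likewise $p$-independent.

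The step I expect to be the main obstacle is exactly $\sigma(\mathbf{A}) \subset \sigma(\mathbf{L})$ at the endpoints $p \in \{1, \infty\}$, where the usual weak-compactness arguments are unavailable. The compact-perturbation route above sidesteps this by working purely with Fredholmness, which is robust on every $L^p$; in the singular-sequence route the delicate point is that at $p = 1$ shrinking supports no longer force $\langle \mathbf{C}\bm{\phi}_n \rangle_\Omega \to 0$, so one must either fall back on the Fredholm argument or choose an oscillating, sign-cancelling singular sequence. All remaining manipulations merely reproduce the decoupling computation already carried out in the proof of Lemma~\ref{Sigma}, and a fully detailed version is given in \cite[Proposition~5.7]{Kowall}.
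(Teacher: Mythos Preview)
Your proof is correct and reaches the same conclusion as the paper, but by a shorter route. Both arguments reduce the missing inclusion $\sigma(\mathbf{A}) \subset \sigma(\mathbf{L})$ to the identity $\sigma_{\mathrm{ess}}(\mathbf{L}) = \sigma_{\mathrm{ess}}(\mathbf{A})$, and both then invoke Proposition~\ref{essspectrum} for $\sigma(\mathbf{A}) = \sigma_{\mathrm{ess}}(\mathbf{A})$. The difference is in how that identity is obtained. The paper permutes the block structure of $\mathbf{L}$ and feeds it into the general block-operator-matrix machinery of \cite[Theorem~2.2]{Atkinson}, verifying in particular that the auxiliary operator $M(\mu) = -\tilde{C}(\mu I - \tilde{A})^{-1}\tilde{B}$ is compact. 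You instead observe directly that the off-diagonal part $\mathbf{K}\bm{\xi} = (\mathbf{B}\xi_2,\, \langle \mathbf{C}\bm{\xi}_1 \rangle_\Omega)$ has range contained in $\mathrm{span}\{\mathbf{B}\} \times \mathbb{R}$ and is therefore of rank at most two; stability of the Wolf essential spectrum under compact (in particular finite-rank) perturbations then gives the result in one line, uniformly in $1 \le p \le \infty$. This is more elementary and sidesteps the need for the Atkinson result; the simplification is available precisely because one factor of the product space is the one-dimensional $\mathbb{R}$, which forces the off-diagonal blocks to have finite rank. Your treatment of $p$-independence via the explicit description of $\Sigma$ as the zero set of $H(\lambda)$ matches the paper's. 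The singular-sequence alternative you sketch is unnecessary once the finite-rank argument is in place, and you are right that it would require extra care at $p=1$.
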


\begin{proof}
It remains to show $\sigma(\mathbf{A}) \subset \sigma(\mathbf{L})$, since from considerations in Lemma \ref{Sigma} we already obtained
\[
 \rho (\mathbf{A}) \cap \Sigma = \Sigma \subset \sigma(\mathbf{L}) \subset \sigma (\mathbf{A}) \mathop{\dot{\cup}} \Sigma.
\]
We apply \cite[Theorem 2.2]{Atkinson} to show equality of the essential Wolf spectrum of $\mathbf{L}$ and $\mathbf{A}$, i.e., $\sigma_{\mathrm{ess}}(\mathbf{A}) = \sigma_{\mathrm{ess}}(\mathbf{L})$. The result of Proposition \ref{essspectrum} shows 
\begin{align*}
\sigma(\mathbf{A}) = \sigma_{\mathrm{ess}} (\mathbf{A}) := \{\lambda \in \mathbb{C} \mid \lambda I - \mathbf{A} \; \text{is not a  Fredholm operator} \}.
\end{align*}
Hence, it remains to show $\sigma_{\mathrm{ess}}(\mathbf{A}) = \sigma_{\mathrm{ess}}(\mathbf{L})$. In order to apply \cite[Theorem 2.2]{Atkinson}, we permute the operator matrix $\mathbf{L}$ in \eqref{Hardtint}. Let us consider the invertible permutation matrix
\[
\mathbf{P} = \begin{pmatrix}
\mathbf{0} & 1 \\
I & \mathbf{0}
\end{pmatrix} \in \mathbb{R}^{(m+1) \times (m+1)} \qquad \text{with} \qquad \mathbf{P}^2 = I
\]
which permutes the single shadow component with the remaining $m$ ODE components. Then $\lambda I - \mathbf{L}$ is a Fredholm operator if and only if $\lambda I - \tilde{\mathbf{L}}$ is Fredholm where $\tilde{\mathbf{L}} = \mathbf{P}^{-1}\mathbf{L} \mathbf{P} \in \mathcal{L}(\mathbb{R} \times L^p(\Omega)^m)$, hence $\sigma_{\mathrm{ess}}(\mathbf{L}) = \sigma_{\mathrm{ess}}(\tilde{\mathbf{L}})$. This is a consequence of the fact that $\mathbf{P} \in \mathcal{L}(\mathbb{R} \times L^p(\Omega)^m; L^p(\Omega)^m \times \mathbb{R})$ is Fredholm by invertibility and $\lambda I - \tilde{\mathbf{L}} = \mathbf{P}^{-1} (\lambda I - \mathbf{L}) \mathbf{P}$ is a composition of Fredholm operators \cite[Ch. 6]{Brezis}. We apply the results of \cite{Atkinson} to the operator $\tilde{\mathbf{L}} \in \mathcal{L}(\mathbb{R} \times L^p(\Omega)^m)$ given by
\[
\tilde{\mathbf{L}} :=  \begin{pmatrix} \tilde{A} & \tilde{B}\\
\tilde{C} & \tilde{D}
   \end{pmatrix}
\]
where we take the bounded multiplication operators $\tilde{D} := \mathbf{A}$ on $X_2:= L^p(\Omega)^{m}$ and $\tilde{A}:=\langle D \rangle_\Omega$ on $X_1:=\mathbb{R}$, where the latter operator has a compact resolvent on $\mathbb{R}$. The operators $\tilde{B}:=\langle \mathbf{C} \cdot \rangle_\Omega: X_2 \to X_1$ and $\tilde{C} :=\mathbf{B}: X_1 \to X_2$ consist of bounded integral and multiplication operators. Note that the operators $S(\mu) = S_0 + M(\mu)$ for $\mu \in \rho(\tilde{A})$ in assumption (e) of their paper is given by $S_0 = \tilde{D}= \mathbf{A}$, and the operator $M(\mu)= -\tilde{C}(\mu I  - \tilde{A})^{-1}\tilde{B}$ which is compact for each $\mu \in \rho(\tilde{A})$ by \cite[Proposition 6.3]{Brezis}. Since $\mathbf{L}$ is bounded, the equality $\sigma_{\mathrm{ess}}(\tilde{\mathbf{L}}) = \sigma_{\mathrm{ess}}(S_0)$ is a consequence of \cite[Theorem 2.2]{Atkinson}. This shows $ \sigma_{\mathrm{ess}}(\mathbf{L})=  \sigma_{\mathrm{ess}}(\mathbf{A})$.\\
Finally, the spectrum is independent of $1 \le p \le \infty$ by Proposition \ref{essspectrum} for $\sigma(\mathbf{A})$ and Lemma \ref{Sigma} for $\Sigma$.
\end{proof}

As in the finite-dimensional case, one has to be careful while deducing stability if the spectral bound of $\mathbf{L}$ is zero \cite[Ch. III, Theorem 1.11]{Eisner}. However, the case $s(\mathbf{L})<0$ implies exponential stability of the corresponding evolution system $\mathcal{W}$.

\begin{corollary} \label{stability}
Let Assumptions \ref{AssumptionN}--\ref{AssumptionB} hold for a stationary shadow solution $(\overline{\mathbf{u}}, \overline{v}) \in L^\infty(\Omega)^{m} \times \mathbb{R}$ of shadow system \eqref{shadow} with $\mathbf{D} \equiv \mathbf{0}$. Then $s(\mathbf{L}) <0$ implies that Assumptions \ref{AssumptionL1p}--\ref{AssumptionL0} hold for $p=\infty$ and some $\mu, \sigma>0$ and, in particular, the assertion of Theorem \ref{TheoremToy} remains valid. Moreover, Assumption \ref{AssumptionL1p} does not hold if $s(\mathbf{L})>0$.
\end{corollary}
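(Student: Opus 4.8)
The plan is to exploit that in the stationary case $\mathcal{W}$ reduces to the uniformly continuous (hence analytic) one-parameter semigroup $\mathbf{W}(t,s)=\mathrm{e}^{\mathbf{L}(t-s)}$ generated by the bounded operator $\mathbf{L}$, so that both stability assumptions become purely spectral statements about $\sigma(\mathbf{L})$ via the growth-bound/spectral-bound identity recorded just before the statement. All the structural work is contained in Proposition \ref{HardtLemma}, whose decomposition $\sigma(\mathbf{L})=\sigma(\mathbf{A})\mathop{\dot{\cup}}\Sigma$ and $p$-independence I would invoke directly.

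For the sufficiency direction, suppose $s(\mathbf{L})<0$. Since $\mathbf{L}$ is bounded, the growth bound of $(\mathrm{e}^{\mathbf{L}t})$ equals $s(\mathbf{L})$ for every $1\le p\le\infty$, so fixing any $\sigma$ with $0<\sigma<-s(\mathbf{L})$ yields a constant $C>0$ with $\|\mathrm{e}^{\mathbf{L}t}\|\le C\mathrm{e}^{-\sigma t}$. Because $\mathbf{W}(t,s)$ depends only on $t-s$, this is exactly Assumption \ref{AssumptionL1p} for $p=\infty$ with stability exponent $\sigma>0$. For Assumption \ref{AssumptionL0} I would observe that $\mathbf{D}\equiv\mathbf{0}$ forces $m_0=m$ and $\mathbf{A}_0=\mathbf{A}$, so the ODE evolution system is $\mathbf{U}(t,s)=\mathrm{e}^{\mathbf{A}(t-s)}$ generated by the bounded multiplication operator $\mathbf{A}$, whose growth bound on $L^\infty(\Omega)^m$ equals $s(\mathbf{A})$. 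By Proposition \ref{HardtLemma} we have $\sigma(\mathbf{A})\subset\sigma(\mathbf{L})$, hence $s(\mathbf{A})\le s(\mathbf{L})<0$, and the same semigroup argument gives some $\mu\in(0,-s(\mathbf{A}))$ with the required decay. With $\sigma,\mu>0$ the hypotheses of Theorem \ref{TheoremToy} for $p=\infty$ are met, and in fact the global estimate \eqref{globalestToy} applies.

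For the necessity direction, suppose $s(\mathbf{L})>0$. The spectral mapping theorem for the analytic semigroup gives $\sigma(\mathrm{e}^{\mathbf{L}t})=\mathrm{e}^{t\sigma(\mathbf{L})}$, so the spectral radius obeys $r(\mathrm{e}^{\mathbf{L}t})=\mathrm{e}^{t\,s(\mathbf{L})}$. Consequently $\|\mathbf{W}(t,0)\|=\|\mathrm{e}^{\mathbf{L}t}\|\ge r(\mathrm{e}^{\mathbf{L}t})=\mathrm{e}^{t\,s(\mathbf{L})}\to\infty$ as $t\to\infty$, which is incompatible with any uniform bound $\|\mathbf{W}(t,s)\bm{\xi}^0\|\le C\mathrm{e}^{-\sigma(t-s)}\|\bm{\xi}^0\|$ with $\sigma\ge0$, since even the weakest case $\sigma=0$ requires $\sup_{t}\|\mathbf{W}(t,0)\|<\infty$. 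Hence Assumption \ref{AssumptionL1p} fails.

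The argument is short because Proposition \ref{HardtLemma} and the preceding growth-bound identity do the heavy lifting. The only step I expect to require genuine care is the $p=\infty$ case: one must ensure that the spectral mapping theorem and the equality of growth and spectral bounds remain valid on $L^\infty$, where the strongly continuous theory is delicate. This is legitimate here precisely because $\mathbf{L}$ and $\mathbf{A}$ are bounded, so their semigroups are uniformly continuous and one may appeal to the holomorphic functional calculus identity $\sigma(\mathrm{e}^{\mathbf{L}t})=\mathrm{e}^{t\sigma(\mathbf{L})}$ directly, rather than through the growth-bound theory for general $C_0$-semigroups.
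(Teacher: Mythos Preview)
Your proof is correct and follows essentially the same approach as the paper: both use that the bounded generator $\mathbf{L}$ yields an analytic semigroup whose growth bound equals $s(\mathbf{L})$, invoke Proposition~\ref{HardtLemma} to obtain $s(\mathbf{A})\le s(\mathbf{L})<0$ for Assumption~\ref{AssumptionL0}, and argue the necessity direction by noting that a positive spectral bound forces a positive growth bound, contradicting the uniform boundedness in Assumption~\ref{AssumptionL1p}. Your explicit observation that $\mathbf{D}\equiv\mathbf{0}$ gives $m_0=m$ and your closing remark on why the $p=\infty$ case is legitimate (via the holomorphic functional calculus for bounded operators) are welcome clarifications that the paper leaves implicit.
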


\begin{proof}
By Proposition \ref{HardtLemma}, we may consider $\mathbf{L}$ on $L^\infty(\Omega)^m \times \mathbb{R}$. By \cite[Ch. V, Theorem 1.10]{Engel}, uniform exponential stability can be deduced from a negative spectral bound $s(\mathbf{L})<0$. Hence, Assumption \ref{AssumptionL1p} is satisfied for $p=\infty$ and some $\sigma>0$. From the spectral decomposition in Proposition \ref{HardtLemma}, we infer that $s(\mathbf{A}) \le s(\mathbf{L})<0$, i.e., by the same arguments, $\mathcal{U}$ is uniformly exponentially stable on $L^\infty(\Omega)^m$.\\ Note that the growth bound of the semigroup equals the spectral bound $s(\mathbf{L})$. As Assumption \ref{AssumptionL1p} would imply a growth bound of the semigroup which is non-positive, this is impossible in the case $s(\mathbf{L})>0$. 
\end{proof}

\begin{remark} \label{rem:stabilityshadow}
The conclusion of Corollary \ref{stability} remains the same  for the general shadow system \eqref{shadow} if one replaces the spectral condition $s(\mathbf{L})<0$ by $s(\mathbf{D}_0 \Delta + \mathbf{L})<0$ and $p=\infty$ by any $1< p<\infty$. In this case $\mathbf{D}_0 \Delta + \mathbf{L}$ is considered as a bounded perturbation of the operator $\mathbf{D}_0 \Delta$. By Proposition \ref{heathom} and  \cite[Ch. III, Proposition 1.12]{Engel}, $\mathbf{D}_0 \Delta + \mathbf{L}$ generates an analytic semigroup on $L^p(\Omega)^m \times \mathbb{R}$ for each $1< p< \infty$. Finally, uniform exponential stability can be deduced from a negative spectral bound \cite[Ch. V, Theorem 1.10]{Engel}.
\end{remark}

\subsubsection{Multiplication operator} \label{sec:multoperator}

Each space-dependent matrix $\mathbf{A} \in L^\infty(\Omega)^{m \times m}$ induces a corresponding multiplication operator
\[
\mathbf{M}_{\mathbf{A}}: L^p(\Omega)^m \to L^p(\Omega)^m, \qquad \mathbf{z} \mapsto \mathbf{A} \mathbf{z}
\]
where $(\mathbf{A}\mathbf{z})(x) :=\mathbf{A}(x)\mathbf{z}(x)$ for each $\mathbf{z} \in L^p(\Omega)^m, x \in \Omega$. Since $\|\mathbf{M}_{\mathbf{A}}\| \le \|\mathbf{A}\|_\infty$, this is a bounded, linear operator for each $1 \le p \le \infty$. We simply write $\mathbf{A}$ instead of $\mathbf{M}_{\mathbf{A}}$. As Proposition \ref{HardtLemma} shows, knowledge of the spectrum of the multiplication operator $\mathbf{A}$ allows us to characterize the spectrum of the shadow operator $\mathbf{L}$ defined in \eqref{Hardtint}. We refer to \cite[Ch. IX]{KJEngel} and \cite[Sections 1-3]{Hardt} for several characterizations of the spectrum $\sigma(\mathbf{A})$ of the multiplication operator $\mathbf{A}$ on $L^p(\Omega)^{m}$ for $1 \le p < \infty$. The following result concerning the essential spectrum is known for the case $p=2$ by \cite[Proposition 3.2, Corollary 3.4]{Hardt} and for the scalar case by \cite[Proposition 3]{HTakagi}. A generalization to arbitrary exponents $1 \le p \le \infty$ is given next.

\begin{proposition} \label{essspectrum}
Let $\mathbf{A} \in L^\infty(\Omega)^{m \times m}, m \in \mathbb{N},$ and let $\mathbf{A}$ denote its corresponding multiplication operator on $L^p(\Omega)^{m}$ for some $1 \le p \le \infty$. Then there exists a null set $N \subset \Omega$ such that
\begin{align}
\sigma(\mathbf{A}) = \overline{ \bigcup_{x \in \Omega \setminus N} \sigma(\mathbf{A}(x)) }. \label{specmult}
\end{align}
Moreover, the whole spectrum is essential in the sense of Wolf, i.e.,
\begin{align*}
\sigma(\mathbf{A}) = \sigma_{\mathrm{ess}} (\mathbf{A}) = \{\lambda \in \mathbb{C} \mid \lambda I - \mathbf{A} \; \text{is not a  Fredholm operator} \}.
\end{align*}
\end{proposition}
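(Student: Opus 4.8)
The plan is to establish the two asserted identities at once, by closing the chain of inclusions
\[
\sigma_{\mathrm{ess}}(\mathbf{A}) \subseteq \sigma(\mathbf{A}) \subseteq \overline{\bigcup_{x \in \Omega \setminus N} \sigma(\mathbf{A}(x))} \subseteq \sigma_{\mathrm{ess}}(\mathbf{A}),
\]
where the first inclusion is automatic and $N$ is a fixed null set off which $\|\mathbf{A}(x)\| \le \|\mathbf{A}\|_{L^\infty(\Omega)}$ holds. The organising observation is that the inverse of $\lambda I - \mathbf{A}$, when it exists, is itself the multiplication operator induced by $x \mapsto (\lambda I - \mathbf{A}(x))^{-1}$; hence $\lambda \in \rho(\mathbf{A})$ if and only if $(\lambda I - \mathbf{A}(x))^{-1}$ exists for a.e.\ $x$ and is essentially bounded in $x$. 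I would record this pointwise characterisation first, together with the fact that it is insensitive to the exponent $1 \le p \le \infty$, which already yields the stated $p$-independence of $\sigma(\mathbf{A})$.

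For the middle inclusion I would fix $\lambda \notin \overline{\bigcup_{x} \sigma(\mathbf{A}(x))}$, so that $\operatorname{dist}(\lambda, \sigma(\mathbf{A}(x))) \ge \delta > 0$ for all $x \in \Omega \setminus N$, and convert this into a uniform resolvent bound via $(\lambda I - \mathbf{A}(x))^{-1} = \operatorname{adj}(\lambda I - \mathbf{A}(x)) / \det(\lambda I - \mathbf{A}(x))$. The adjugate entries are polynomials in the entries of $\mathbf{A}(x)$, hence bounded by a constant depending only on $m$, $|\lambda|$ and $\|\mathbf{A}\|_{L^\infty}$, while $|\det(\lambda I - \mathbf{A}(x))| = \prod_i |\lambda - \mu_i(x)| \ge \delta^m$, the $\mu_i(x)$ being the eigenvalues of $\mathbf{A}(x)$. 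This gives $\operatorname{ess\,sup}_x \|(\lambda I - \mathbf{A}(x))^{-1}\| < \infty$, so $\lambda \in \rho(\mathbf{A})$; note that the $L^\infty$-bound on $\mathbf{A}$ is exactly what prevents the resolvent from blowing up through large Jordan blocks, so the distance bound alone would not suffice.

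The last inclusion is the heart of the matter, and I would prove it by building a singular (Weyl) sequence at every $\lambda \in \overline{\bigcup_x \sigma(\mathbf{A}(x))}$. For such $\lambda$ the measurable function $\phi(x) := \operatorname{dist}(\lambda, \sigma(\mathbf{A}(x)))$ satisfies $\operatorname{ess\,inf}\phi = 0$, so either $\{\phi = 0\}$ has positive measure, or a dyadic decomposition into bands $\{2^{-k-1} \le \phi < 2^{-k}\}$ produces infinitely many bands of positive measure. In either case non-atomicity of Lebesgue measure furnishes pairwise disjoint sets $F_k$ of positive finite measure on which $\phi < \delta_k \to 0$. On each $F_k$ I would select measurably a unit eigenvector $v_k(x)$ of $\mathbf{A}(x)$ for the eigenvalue $\mu_k(x)$ nearest $\lambda$, and set $f_k = |F_k|^{-1/p}\,\mathds{1}_{F_k}\, v_k$ (and $f_k = \mathds{1}_{F_k}\, v_k$ when $p = \infty$). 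Then $\|f_k\| = 1$, whereas $(\lambda I - \mathbf{A})f_k = (\lambda - \mu_k(\cdot))f_k$ has norm at most $\delta_k \to 0$, and disjointness of supports forces $\|f_k - f_j\| = 2^{1/p} \ge 1$ for $k \ne j$, so no subsequence converges. Such a sequence obstructs upper semi-Fredholmness of $\lambda I - \mathbf{A}$ in any Banach space, whence $\lambda \in \sigma_{\mathrm{ess}}(\mathbf{A})$ and the chain closes.

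The obstacles I anticipate are two. First, the measurable selection of the eigenvectors $v_k$, which I would settle either by the Kuratowski--Ryll-Nardzewski theorem applied to $x \mapsto \ker(\mu_k(x) I - \mathbf{A}(x))$, or, more concretely, by choosing on a measurable partition a non-vanishing column of $\operatorname{adj}(\mu_k(x) I - \mathbf{A}(x))$ (breaking ties among nearest eigenvalues by a fixed lexicographic rule on $\mathbb{C}$). Second, the validity of the singular-sequence criterion on the non-separable space $L^\infty(\Omega)^m$: here I would avoid any appeal to weak convergence and argue purely through the absence of a norm-convergent subsequence, which already rules out the finite-dimensional kernel plus closed range required for a Fredholm operator. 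With these two points in hand, the closed formula \eqref{specmult} and the equality $\sigma(\mathbf{A}) = \sigma_{\mathrm{ess}}(\mathbf{A})$ follow uniformly in $1 \le p \le \infty$.
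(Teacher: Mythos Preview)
Your chain of inclusions is correct and closes as you describe, but the route differs from the paper's in two respects. First, the paper leans on the known characterisation $\sigma(\mathbf{A}) = \mathrm{ess}\text{-}\sigma(\mathbf{A}(\Omega))$ from \cite[Ch.~IX, Theorem~2.4]{KJEngel} for $1\le p<\infty$ and only has to supply the missing inclusion for $p=\infty$, whereas you treat all exponents uniformly from scratch. Second, and more substantively, the paper never selects pointwise eigenvectors of $\mathbf{A}(x)$: instead it works on sets $M_j$ where $\lambda I-\mathbf{A}(x)$ is invertible with small determinant, invokes \cite[Lemma~3.1]{Hardt} to measurably pick a unit vector $\mathbf{v}_j(x)$ realising the operator norm $|(\lambda I-\mathbf{A}(x))^{-1}|_2$, and then normalises $\mathbf{u}_j(x)=(\lambda I-\mathbf{A}(x))^{-1}\mathbf{v}_j(x)$. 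For the essential spectrum the paper splits into cases (if $\lambda\in\sigma_p(\mathbf{A})$ the kernel contains a copy of $L^p(\Gamma_\lambda)$, hence is infinite-dimensional; otherwise the operator is injective but not bounded below, so the range is not closed), while you obtain non-Fredholmness in one stroke from your disjoint-support singular sequence. Your approach is more self-contained and conceptually cleaner; the paper's buys an explicit and elementary measurable selection via Hardt's lemma that avoids any appeal to Kuratowski--Ryll-Nardzewski.

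One minor caveat on your selection step: the adjugate alternative you offer fails exactly when the nearest eigenvalue $\mu_k(x)$ has algebraic multiplicity at least two, since then $\mathrm{adj}(\mu_k(x)I-\mathbf{A}(x))=0$. This is not a gap in your argument, as your primary route through the Kuratowski--Ryll-Nardzewski theorem applied to the closed-valued measurable multifunction $x\mapsto\ker(\mu_k(x)I-\mathbf{A}(x))\cap\{|v|=1\}$ is sound, but the ``more concrete'' adjugate recipe should not be presented as a full alternative without a further restriction to simple eigenvalues.
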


\begin{proof}
Boundedness of the multiplication operator leads to a non-empty resolvent set $\rho(\mathbf{A}) \not= \emptyset$. For $1\le p < \infty$, \cite[Ch. IX, Theorem 2.4]{KJEngel} states
\[
\sigma(\mathbf{A}) = \left\{ \lambda \in \mathbb{C} \mid |N_{\lambda, \varepsilon}| >0 \quad \forall \; \varepsilon>0 \right\} =: \mathrm{ess-}\sigma(\mathbf{A}(\Omega)),
\]
for measurable sets 
\[
N_{\lambda, \varepsilon} := \{ x \in \Omega \mid  \mathrm{dist}(\lambda, \sigma(\mathbf{A}(x))) < \varepsilon \}. 
\]
On the one hand, along the same lines of that proof, $\sigma(\mathbf{A}) \subset \mathrm{ess-}\sigma(\mathbf{A}(\Omega))$ also holds for $p=\infty$. On the other hand, the proof of $\sigma(\mathbf{A}) \supset \mathrm{ess-}\sigma(\mathbf{A}(\Omega))$ given in \cite[Ch. IX, Theorem 2.4]{KJEngel} does not apply for $p=\infty$. In order to prove the above representation \eqref{specmult} of the spectrum, it remains to show the inclusion $\mathrm{ess-}\sigma(\mathbf{A}(\Omega)) \subset \sigma(\mathbf{A})$ and \cite[Ch. IX, Remark 2.3]{KJEngel} yields the result.\\
Using the idea of \cite[Theorem 3.3]{Hardt}, we show that each $\lambda \in \mathrm{ess-}\sigma(\mathbf{A}(\Omega))$ is in the spectrum of $\mathbf{A}$. As the characteristic polynomial of the matrix $\mathbf{A}(x)$ factorizes with eigenvalues $\lambda_i(x) \in \mathbb{C}$, we obtain
\begin{align}
|\det(\lambda  I - \mathbf{A}(x))| = \prod_{i=1}^{m} |\lambda - \lambda_i(x)| \ge \mathrm{dist}(\lambda, \sigma(\mathbf{A}(x)))^{m} \label{detdist}
\end{align}
 for a.e. $x \in \Omega$. This estimate yields the inclusion
\[
\Gamma_{\lambda, \varepsilon} := \left\{x \in \Omega \mid |\det(\lambda I -\mathbf{A}(x))| < \varepsilon^{m} \right\} \subset  N_{\lambda, \varepsilon} 
\] 
and we conclude that
\begin{align}
\Gamma_\lambda := \{x \in \Omega \mid \det(\lambda I -\mathbf{A}(x))=0\} \label{det=0}
\end{align}
satisfies $0 \le |\Gamma_\lambda| \le \lim_{\varepsilon \to 0} |N_{\lambda, \varepsilon}|$ as the limit of the above subsets of $N_{\lambda, \varepsilon}$ as $\varepsilon \to 0$. The sequence $(|\Gamma_{\lambda, \varepsilon}|)_{\varepsilon>0}$ of non-negative numbers is non-increasing as $\varepsilon \to 0$ with a limit which is either positive or zero. % The sequence $(|N_{\lambda, \varepsilon}|)_{\varepsilon>0}$ is non-increasing as $\varepsilon \to 0$ with a limit either being positive or zero.\\
In the former case, we conclude that $\Gamma_\lambda$ defined in \eqref{det=0} has positive measure 
%\begin{align}
%|\Gamma_\lambda| \le \lim_{\varepsilon \to 0} |N_{\lambda, \varepsilon}| >0 \label{pointspec}
%\end{align}
which is equivalent to $\lambda \in \sigma_p(\mathbf{A})$ using \cite[Theorem 2.1]{Heymann} or \cite[Theorem 2.5]{Hardt}. In the latter case, $|\Gamma_\lambda| =\lim_{\varepsilon \to 0} |\Gamma_{\lambda, \varepsilon}| =0$, we show that the injective operator $\lambda I - \mathbf{A}$ is not bounded from below, hence $\lambda \in \sigma(\mathbf{A})$. Although we know from $\lambda \in  \mathrm{ess-}\sigma(\mathbf{A}(\Omega))$ that $|N_{\lambda, \varepsilon}|>0$ for all $\varepsilon>0$, there are still two possibilities for the zero sequence $(|\Gamma_{\lambda, \varepsilon}|)_{\varepsilon>0}$: either $|\Gamma_{\lambda, \varepsilon}| >0$ for all $\varepsilon>0$ or the sequence becomes stationary in the sense that $|\Gamma_{\lambda, \varepsilon}| =0$ for all $0 <\varepsilon \le \varepsilon_0$ and some $\varepsilon_0>0$. In both cases we construct a sequence $(\mathbf{f}_j)_{j \in \mathbb{N}} \subset L^p(\Omega)^{m}$ with $\|\mathbf{f}_j\|_{L^p(\Omega)^{m}} =1$ for which $\|(\lambda I - \mathbf{A})\mathbf{f}_j\|_{L^p(\Omega)^{m}} \to 0$ as $j \to \infty$, hence $\lambda I - \mathbf{A}$ can not be bounded from below.
\begin{itemize}
\item[•] Let $|\Gamma_{\lambda, \varepsilon}| >0$ for all $\varepsilon>0$. Thus, we are able to extract a decreasing subsequence $(\Gamma_{\lambda, \varepsilon_j})_{j \in \mathbb{N}}$ with $\varepsilon_j \to 0$ as $j \to \infty$ such that 
\[
|\Gamma_{\lambda, \varepsilon_j}|>0, \qquad \Gamma_{\lambda, \varepsilon_{j+1}} \subset \Gamma_{\lambda, \varepsilon_{j}} \qquad \text{and} \qquad \left| \Gamma_{\lambda, \varepsilon_{j}} \setminus \Gamma_{\lambda, \varepsilon_{j+1}} \right| >0. 
\]
By choosing measurable sets $M_j \subset \Gamma_{\lambda, \varepsilon_{j}} \setminus \Gamma_{\lambda, \varepsilon_{j+1}}$ with $|M_j|>0$ for all $j \in \mathbb{N}$, we obtain the estimate
\begin{align}
\varepsilon_{j+1}^{m} \le |\det(\lambda I - \mathbf{A}(x))|   < \varepsilon_j^{m} \qquad \forall \;  x \in M_j. \label{invertibility}
\end{align}
This enables us to apply \cite[Lemma 3.1]{Hardt} to the matrix $(\lambda - \mathbf{A}(x))^{-1}$. Consequently, we find measurable vector-valued functions $\mathbf{v}_j: M_j \to \mathbb{C}^{m}$ satisfying 
\begin{align*}
|\mathbf{v}_j(x)|_2 = 1 \qquad \text{and} \qquad \left|(\lambda I - \mathbf{A}(x))^{-1}\mathbf{v}_j(x)\right|_2 = \left|(\lambda I - \mathbf{A}(x))^{-1}\right|_2
\end{align*}
for all $x \in M_j$, where we used $| \cdot|_2$ for the Euclidean norm on $\mathbb{C}^{m}$ and for the induced matrix norm. Define $\mathbf{u}_j(x) = (\lambda  I - \mathbf{A}(x))^{-1} \mathbf{v}_j(x)$ as well as functions $\mathbf{f}_j \in L^p(\Omega)^{m}$ by
\[
\mathbf{f}_j(x) = c_p(j) \frac{\mathbf{u}_j(x)}{|\mathbf{u}_j(x)|_2}  \mathds{1}_{M_j}(x)
\]
where $c_p(j)=|M_j|^{-1/p}$ for $p< \infty$ and $c_p(j)=1$ for $p=\infty$. %Here, we fix $|\cdot|_2$ as the vector norm on $\mathbb{C}^{m}$. 
This implies the normalization
$
\|\mathbf{f}_j\|_{L^p(\Omega)^{m}}^p = \int_\Omega |\mathbf{f}_j(x)|_2^p \; \mathrm{d}x =1,
$
with an obvious modification for $p=\infty$. Applying $\lambda I - \mathbf{A}$ to $\mathbf{f}_j$ yields 
\[
(\lambda I - \mathbf{A}(x))\mathbf{f}_j(x) = c_p(j) \mathds{1}_{M_j}(x) \mathbf{v}_j(x) \left|(\lambda I - \mathbf{A}(x))^{-1}\right|_2^{-1}.
\]
From the invertibility condition \eqref{invertibility} we infer 
\[
 \left|(\lambda I - \mathbf{A}(x))^{-1}\right|_2^{-1} \le \mathrm{dist}(\lambda, \sigma(\mathbf{A}(x))) \qquad \forall \;  x \in M_j
 \]
where we used \cite[Ch. IV, Corollary 1.14]{Engel}. A combination of estimates \eqref{detdist} and \eqref{invertibility} yields 
$
 \mathrm{dist}(\lambda, \sigma(\mathbf{A}(x))) < \varepsilon_j
$
for $x \in M_j$, which implies 
$
\|(\lambda I- \mathbf{A}) \mathbf{f}_j\|_{L^p(\Omega)^{m}} \le \varepsilon_j.
$
Since $\varepsilon_j \to 0$, $\lambda$ is an approximate eigenvalue of $\mathbf{A}$.%, i.e., $\lambda \in \sigma(\mathbf{A})$. 

\item[•] Let $|\Gamma_{\lambda, \varepsilon}| =0$ for all $0 <\varepsilon \le \varepsilon_0$. The definition of $\Gamma_{\lambda, \varepsilon}$ yields the pointwise invertibility condition 
\begin{equation*}
|\det(\lambda I - \mathbf{A}(x))|  \ge \varepsilon_0^{m} >0 \qquad \text{for a.e.} \quad  x \in \Omega.% \label{invertibility2}
\end{equation*}
Taking $M_j:= N_{\lambda, \varepsilon_j} \subset \Omega$ with $|M_j|>0$ for any zero sequence $(\varepsilon_j)_{j \in \mathbb{N}}$, we find, similar to the above reasoning, a sequence $(\mathbf{f}_j)_{j \in \mathbb{N}} \subset L^p(\Omega)^{m}$ satisfying 
$
\|(\lambda I- \mathbf{A}) \mathbf{f}_j\|_{L^p(\Omega)^{m}} \le \varepsilon_j.
$
Since $\varepsilon_j \to 0$, $\lambda$ is an approximate eigenvalue of $\mathbf{A}$. Note that in this case, $N_{\lambda, \varepsilon_j}$ cannot become stationary since then $M_j$ and $\mathbf{f}_j$ would become stationary which implies $(\lambda I - \mathbf{A}) \mathbf{f}_j = \mathbf{0}$ -- a contradiction to $\mathbf{f}_j \not= \mathbf{0}$.
\end{itemize}

\noindent It remains to show that $\lambda I - \mathbf{A}$ is not a Fredholm operator for all $\lambda \in \sigma(\mathbf{A})$. To do so, we prove that for each $\lambda \in \sigma(\mathbf{A})$ either $\lambda I - \mathbf{A}$ has no closed range or an infinite-dimensional kernel.\\ %This implies that $\lambda I - \mathbf{A}$ is not a Fredholm operator.\\
If $\lambda \in \sigma_p(\mathbf{A})$, we note that the results \cite[Lemma 2.4, Theorem 2.5]{Hardt} hold independently of $1 \le p \le \infty$. Hence, the first part of the proof of \cite[Proposition 3.2]{Hardt} is still applicable: we infer $\sigma_p(\mathbf{A}) \subset \sigma_{\mathrm{ess}}(\mathbf{A})$ from an infinite-dimensional kernel of $\lambda I - \mathbf{A}$ containing a subspace isomorphic to $L^p(\Gamma_\lambda)$ \cite[Corollary 2.6]{Hardt}.\\
If $\lambda \in \sigma(\mathbf{A}) \setminus \sigma_p(\mathbf{A})$, we necessarily have $|\Gamma_\lambda|=\lim_{\varepsilon \to 0} |\Gamma_{\lambda, \varepsilon}| =0$. From the above reasoning we know that $\lambda I -\mathbf{A}$ is not bounded from below. Thus, the injective operator $\lambda I - \mathbf{A}$ cannot have closed range by \cite[Theorem 2.19, Remark 18]{Brezis} and $\lambda I - \mathbf{A}$ is not a Fredholm operator. %semi-Fredholm for $\lambda \in \sigma(\mathbf{A}) \setminus \sigma_p(\mathbf{A})$. 
%Recall that the constructed sequence $(\mathbf{f}_j)_{j \in \mathbb{N}} \subset L^p(\Omega)^{m}$ is in fact singular, see \cite[Ch. 9, Definition 1.2]{Edmunds}, subject to a similar choice of disjoint sets $M_j$ in the second case above.
%Note that in case of $\lim_{\varepsilon \to 0} |\Gamma_{\lambda, \varepsilon}| =0$ we actually obtained a singular sequence above \cite[Ch. 9, Definition 1.2]{Edmunds}: a sequence $(\mathbf{f}_j)_{j \in \mathbb{N}} \subset L^p(\Omega)^{m}$ with $\|\mathbf{f}_j\|_{L^p(\Omega)^{m}} =1$ for which $\|(\lambda I - \mathbf{A})\mathbf{f}_j\|_{L^p(\Omega)^{m}} \to 0$ holds, but which does not have a convergent subsequence $(\mathbf{f}_{j_k})_{k \in \mathbb{N}}$. In the first case, $(\mathbf{f}_j)_{j \in \mathbb{N}} \subset L^p(\Omega)^{m}$ does not contain any convergent subsequence since the sets $M_j$ are pairwise disjoint: clearly, for $j \not=\ell$ we have $\|\mathbf{f}_j- \mathbf{f}_\ell\|_{L^\infty(\Omega)^{m}} = 1$ and 
%\[
%\|\mathbf{f}_j- \mathbf{f}_\ell\|_{L^p(\Omega)^{m}}^p = 2
%\]
%for all $1 \le p < \infty$. In the second case, the sets $M_j$ may be chosen disjoint in a similar way since $M_j \subset N_{\lambda, \varepsilon_j}$ is arbitrary.
\end{proof}

Note that the above proof may be shortened extremely for the cases $1 \le p< \infty$. One can essentially use the same method of proof from \cite[Proposition 3.2]{Hardt} for the case $p=2$ having the characterization for the dual multiplication operator from \cite[Ch. IX, Proposition 1.4]{KJEngel} in mind. %Unfortunately, this method does not work for $p=\infty$.

\subsection{Dissipative linear shadow system} 

Theorem \ref{Theorem} can be proven in a similar way for the Hilbertian case $p=2$ using energy estimates. The approach is based on a linear shadow system of the form \eqref{shadlinear} which is dissipative in $L^2(\Omega)^m \times \mathbb{R}$. Dissipative systems are a smaller class of systems for which uniform stability of the corresponding evolution system $\mathcal{W}$ is often easier to verify. In particular, one reduces considerations to the linearized shadow system without diffusion, compare Assumption \ref{AssumptionD1p}. This consideration is especially helpful when we look at classical shadow systems which contain diffusing components. \\

Let us first consider the evolutionary subsystem $\mathcal{U}$ from Assumption \ref{AssumptionL0} to describe the principle of dissipativity, see Assumption \ref{AssumptionDp}. %For simplicity, we consider $\mathbf{A}$ instead of $\mathbf{A}_0$ in Assumption \ref{AssumptionL0}, i.e., all $m$ components are non-diffusing.
Special cases of uniform bounded evolution systems are given by contractive or dissipative systems. Concerning contractivity in the time-independent case, \cite[Ch. III, Theorem 2.7]{Engel} applies the condition
\begin{equation}
\|\mathbf{y}\|_{L^p(\Omega)^{m_0}} \le \|(I-\lambda \mathbf{A}_0)\mathbf{y}\|_{L^p(\Omega)^{m_0}} \qquad \forall \; \lambda \in \mathbb{R}_{> 0}, \mathbf{y} \in \mathbf{L}^p(\Omega)^{m_0}. \label{dissip1}
\end{equation} 
The time-dependent case can be treated in a similar way, following \cite{Lovelady}. Therefore, consider the bounded multiplication operators $\mathbf{A}_0(\cdot,t)$ on $L^p(\Omega)^{m_0}$ of non-diffusing components as for Assumption \ref{AssumptionL0} and let us assume

 \begin{assprime}{AssumptionL0}[Dissipativity of ODE subsystem] \label{AssumptionDp}
There exist constants $1 \le p \le \infty, \mu \ge 0$ and a continuous function $\kappa: \mathbb{R}_{\ge 0} \to \mathbb{R}_{\ge 0}$ with $\kappa \in L^1(\mathbb{R}_{\ge 0})$ such that 
\begin{align}
(1-\lambda (\kappa(t)-\mu)) \|\mathbf{y}\|_{L^p(\Omega)^{m_0}} \le \|(I-\lambda \mathbf{A}_0(\cdot,t)) \mathbf{y}\|_{L^p(\Omega)^{m_0}}  \label{dissip2}
\end{align}
is satisfied for all $\mathbf{y} \in L^p(\Omega)^{m_0}$, $\lambda \in \mathbb{R}_{> 0}$, and $t \in \mathbb{R}_{\ge 0}$. 
 \end{assprime}

\noindent Note that $\kappa \equiv \mu \equiv 0$ corresponds to \cite[Theorem 1]{Kato}. %Before providing equivalent formulations of dissipativity condition \eqref{dissip2}, 
It is well-known that Assumption \ref{AssumptionDp} implies uniform boundedness of the corresponding evolution system. 

\begin{proposition} \label{Propdissbound}
Let $\mathbf{A}_0: \Omega \times \mathbb{R}_{\ge 0} \to \mathbb{R}^{m_0 \times m_0}$ be a measurable, locally bounded matrix-valued function satisfying Assumption \ref{AssumptionDp} for some $1 \le p \le \infty, \mu \ge 0$. Then the evolution system $\mathcal{U}$ induced by $\mathbf{A}_0(\cdot, t)$ is uniformly stable on $L^p(\Omega)^{m_0}$, i.e., Assumption \ref{AssumptionL0} is satisfied, for the same exponent $1 \le p \le \infty$ and $\mu \ge 0$.
\end{proposition}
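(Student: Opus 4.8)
The plan is to turn the dissipativity estimate \eqref{dissip2} into a differential inequality for the $L^p$-norm of the solution and then integrate it by Gronwall's lemma, using $\kappa \in L^1(\mathbb{R}_{\ge 0})$ to absorb the time-dependence into a time-independent constant. Write $\omega(t) := \kappa(t) - \mu$, so that \eqref{dissip2} reads $(1 - \lambda\omega(t))\|\mathbf{y}\| \le \|(I - \lambda\mathbf{A}_0(\cdot,t))\mathbf{y}\|$; equivalently, for $\lambda > 0$ small enough that $1 - \lambda\omega(t) > 0$, the resolvent of $\mathbf{A}_0(\cdot,t)$ satisfies $\|(I - \lambda\mathbf{A}_0(\cdot,t))^{-1}\|_{\mathcal{L}(L^p(\Omega)^{m_0})} \le (1 - \lambda\omega(t))^{-1}$. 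This is precisely the time-dependent dissipativity condition underlying \cite{Kato} for $\omega \equiv 0$ and \cite{Lovelady} in the general case.

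First I would fix $\bm{\psi}^0 \in L^p(\Omega)^{m_0}$ and let $\bm{\psi}(t) = \mathbf{U}(t,s)\bm{\psi}^0$ be the corresponding solution of the evolution system $\mathcal{U}$; since $\mathbf{A}_0$ is a locally bounded, measurable multiplication operator, $\bm{\psi}$ is absolutely continuous in $L^p(\Omega)^{m_0}$ with $\partial_t\bm{\psi}(t) = \mathbf{A}_0(\cdot,t)\bm{\psi}(t)$ for a.e.\ $t$, in the sense of \cite[Ch. III, \S 1]{Daleckii}. Set $\phi(t) := \|\bm{\psi}(t)\|_{L^p(\Omega)^{m_0}}$, which is absolutely continuous since the norm is Lipschitz. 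The key step is the one-sided estimate $D^+\phi(t) \le \omega(t)\phi(t)$ for a.e.\ $t$. Formally, the backward relation $\bm{\psi}(t) = (I - h\mathbf{A}_0(\cdot,t+h))\bm{\psi}(t+h) + o(h)$ as $h \to 0^+$ combined with \eqref{dissip2} at $\lambda = h$, time $t+h$, gives $\phi(t) \ge (1 - h\omega(t+h))\phi(t+h) + o(h)$, which, after rearranging and using continuity of $\kappa$, yields the claimed Dini inequality. The rigorous backbone for this step is the passage to the limit of the implicit Euler scheme $\bm{\psi}_{n+1} = (I - h\mathbf{A}_0(\cdot,t_{n+1}))^{-1}\bm{\psi}_n$, whose iterates obey $\|\bm{\psi}_{n+1}\| \le (1 - h\omega(t_{n+1}))^{-1}\|\bm{\psi}_n\|$ by the resolvent bound above, exactly the construction of \cite{Lovelady}.

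Integrating then gives $\phi(t) \le \exp\!\big(\int_s^t\omega(\tau)\,\mathrm{d}\tau\big)\phi(s)$, and I would use $\kappa \ge 0$ with $\kappa \in L^1(\mathbb{R}_{\ge 0})$ to bound $\int_s^t\omega(\tau)\,\mathrm{d}\tau = \int_s^t\kappa(\tau)\,\mathrm{d}\tau - \mu(t-s) \le \|\kappa\|_{L^1(\mathbb{R}_{\ge 0})} - \mu(t-s)$, so that
\[
\|\mathbf{U}(t,s)\bm{\psi}^0\|_{L^p(\Omega)^{m_0}} \le \mathrm{e}^{\|\kappa\|_{L^1(\mathbb{R}_{\ge 0})}}\,\mathrm{e}^{-\mu(t-s)}\|\bm{\psi}^0\|_{L^p(\Omega)^{m_0}},
\]
which is Assumption \ref{AssumptionL0} with $C = \exp(\|\kappa\|_{L^1(\mathbb{R}_{\ge 0})})$ and the same exponent $\mu$.

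The main obstacle is the rigorous justification of the Dini inequality when $\mathbf{A}_0$ is only measurable in $t$ and the $L^p$-norm is non-smooth (notably for $p \in \{1,\infty\}$), since the formal backward expansion is not valid pointwise in $t$. The clean way around this is to run the implicit Euler discretization and pass to the limit as in \cite{Lovelady, Kato}; alternatively, since $\mathbf{A}_0(\cdot,t)$ is a multiplication operator, one may reduce \eqref{dissip2} to the pointwise matrix estimate $|(I - \lambda\mathbf{A}_0(x,t))\mathbf{v}| \ge (1 - \lambda\omega(t))|\mathbf{v}|$ for a.e.\ $x$ and all $\mathbf{v} \in \mathbb{R}^{m_0}$, apply the finite-dimensional Gronwall estimate to the fundamental solution $\Phi_x(t,s)$ of $\dot{\mathbf{y}} = \mathbf{A}_0(x,t)\mathbf{y}$, and integrate the pointwise bound $|\Phi_x(t,s)\mathbf{v}| \le \mathrm{e}^{\int_s^t\omega(\tau)\,\mathrm{d}\tau}|\mathbf{v}|$ over $x \in \Omega$.
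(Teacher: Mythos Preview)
Your proposal is correct and, in fact, supplies the argument that the paper merely outsources: the paper's proof of this proposition is the single line ``See \cite[Remark 2.2, p.~119]{Pavel},'' and the content of that reference is precisely the passage from the time-dependent dissipativity inequality to the exponential bound via the Dini-type estimate and Gronwall, as you sketch. Your identification of the technical obstacle (measurability in $t$, non-smoothness of the norm for $p\in\{1,\infty\}$) and the two remedies (implicit Euler as in \cite{Lovelady,Kato}, or pointwise reduction to the finite-dimensional fundamental matrix) is accurate; note that the pointwise reduction you mention is exactly what the paper establishes later in Lemma~\ref{quadformLem}, so at this point in the text it is an anticipation rather than an available tool.
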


\begin{proof}
See \cite[Remark 2.2, p. 119]{Pavel}.
\end{proof}

Before deriving equivalent formulations of dissipativity condition \eqref{dissip2}, we formulate a similar dissipativity assumption for the linearized shadow evolution system $\mathcal{W}$. Using the notation $\mathbf{D}_0 = \mathrm{diag}(\mathbf{D}, 0)$, let us rewrite the shadow problem \eqref{shadlinear} as an ordinary differential equation in the Banach space $L^p(\Omega)^m \times \mathbb{R}$,
\begin{align*}
\frac{\mathrm{d}}{\mathrm{d} t} \bm{\xi} & = 
\mathbf{D}_0\Delta \bm{\xi} + \mathbf{L}(t) \bm{\xi} \quad  \text{in} \quad \mathbb{R}_{>0}, \qquad \bm{\xi}(0) = \begin{pmatrix}
\bm{\xi}^0_1\\
 \xi_2^0 
\end{pmatrix}.
\end{align*}
By Assumption \ref{AssumptionN}, $(\mathbf{L}(t))_{t \in \mathbb{R}_{\ge 0}}$ is a family of bounded shadow operators on $L^p(\Omega)^m \times \mathbb{R}$. The full operator $\mathbf{D}_0\Delta + \mathbf{L}(t)$ can be seen as a perturbation of the matrix operator $\mathbf{D}_0\Delta$ which generates a contraction semigroup on $L^p(\Omega)^m \times \mathbb{R}$ for all $1 \le p \le \infty$ by Proposition \ref{heathom}. In analogy to Assumption \ref{AssumptionDp} we assume

\begin{assprime}{AssumptionL1p}[Dissipativity of shadow system] 
\label{AssumptionD1p} There exist $1 \le p \le \infty, \sigma \ge 0$ and a continuous function $\varrho: \mathbb{R}_{\ge 0} \to \mathbb{R}_{\ge 0}$ such that $\varrho \in L^1(\mathbb{R}_{\ge 0})$ and
\begin{align}
(1-\lambda (\varrho(t)- \sigma)) \|\mathbf{y}\|_{L^p(\Omega)^m \times \mathbb{R}} \le \|(I-\lambda \mathbf{L}(t)) \mathbf{y}\|_{L^p(\Omega)^m \times \mathbb{R}}  \label{dissip5}
\end{align}
is satisfied for all $\mathbf{y} \in L^p(\Omega)^m \times \mathbb{R}$, $\lambda\in \mathbb{R}_{> 0}$, and $t \in \mathbb{R}_{\ge 0}$. 
\end{assprime}

Using the duality map, one obtains an equivalent integral inequality of the form \eqref{dissip3} for $1 < p < \infty$. The latter has a quite convenient form for $L^2$ energy estimates:
\[
\int_\Omega \mathbf{y}^T \left(\mathbf{L}(\cdot,t)- (\varrho(t)- \sigma) I \right) \mathbf{y} \; \mathrm{d}x \le 0 \qquad \forall \; \mathbf{y} \in L^2(\Omega)^m \times \mathbb{R},  t \in \mathbb{R}_{\ge 0}
\]
Similar to the assertion of Proposition \ref{Propdissbound}, we reach at the following stability result which is proven at the end of this section.

\begin{proposition} \label{Propdissfull}
Let the linear operators $\mathbf{L}(t): L^p(\Omega)^m \times \mathbb{R} \to L^p(\Omega)^m \times \mathbb{R}$ defined above for $t \in \mathbb{R}_{\ge 0}$ satisfy Assumption \ref{AssumptionD1p} for some $1 \le p \le \infty, \sigma \ge 0$ and uniformly bounded coefficients $\mathbf{A}, \mathbf{B}, \mathbf{C}$ and $D$. Then the corresponding shadow evolution system $\mathcal{W}$ induced by $\mathbf{D}_0\Delta + \mathbf{L}(t)$ is uniformly stable on $L^p(\Omega)^m \times \mathbb{R}$, thus $\mathcal{W}$ satisfies Assumption \ref{AssumptionL1p}, for the same exponent $p$ and $\sigma$. Moreover, Assumption \ref{AssumptionL0} is satisfied for all $1 \le p \le \infty$ and $\mu= \sigma$.
\end{proposition}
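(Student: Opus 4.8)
The plan is to deduce the two stability statements from the dissipativity hypothesis in two stages: first I would show that the full generator $\mathbf{D}_0\Delta+\mathbf{L}(t)$ is quasi-dissipative, so that the evolution system $\mathcal{W}$ is contractive up to a scalar integrating factor, and then I would recover the ODE subsystem condition by restricting the resolvent inequality \eqref{dissip5} to functions living on the non-diffusing block. The core mechanism is that a \emph{sum} of dissipative operators is again dissipative provided both pieces are dissipative with respect to one and the same supporting functional.

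First I would note that $\mathbf{D}_0\Delta$ generates a contraction semigroup on $L^p(\Omega)^m\times\mathbb{R}$ for $1<p<\infty$ by Proposition \ref{heathom}, hence is dissipative, while Assumption \ref{AssumptionD1p} says exactly that $\mathbf{L}(t)-(\varrho(t)-\sigma)I$ is dissipative for each fixed $t$. Since for $1<p<\infty$ the space $L^p(\Omega)^m\times\mathbb{R}$ is smooth, its duality map $j$ is single-valued, so both operators satisfy their dissipativity inequality with respect to the \emph{same} functional $j(\mathbf{y})$; adding the two inequalities shows that
\[
\mathbf{G}(t):=\mathbf{D}_0\Delta+\mathbf{L}(t)-(\varrho(t)-\sigma)I
\]
is dissipative, with $D(\mathbf{G}(t))=D(\mathbf{D}_0\Delta)$ independent of $t$ because $\mathbf{L}(t)$ is bounded. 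For $p=2$ this is just the energy inequality \eqref{dissip3} and the argument is transparent. By the time-dependent evolution theory underlying Proposition \ref{Propdissbound} (see \cite{Pavel}), $\mathbf{G}(t)$ generates a contraction evolution system $\tilde{\mathbf{W}}(t,s)$, i.e. $\|\tilde{\mathbf{W}}(t,s)\|\le 1$.

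Next, because the scalar term $(\varrho(t)-\sigma)I$ commutes with everything, an integrating factor removes it: the evolution system generated by $\mathbf{D}_0\Delta+\mathbf{L}(t)$ satisfies
\[
\mathbf{W}(t,s)=\exp\!\Big(\int_s^t(\varrho(\tau)-\sigma)\,\mathrm{d}\tau\Big)\,\tilde{\mathbf{W}}(t,s),
\]
so taking norms and using $\varrho\in L^1(\mathbb{R}_{\ge 0})$ gives $\|\mathbf{W}(t,s)\|\le \exp(\|\varrho\|_{L^1})\,\mathrm{e}^{-\sigma(t-s)}$, which is precisely Assumption \ref{AssumptionL1p} with $C=\exp(\|\varrho\|_{L^1})$ and the same $p,\sigma$. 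For the ODE subsystem I would test \eqref{dissip5} against $\mathbf{y}=((\mathbf{y}_0,\mathbf{0}),0)$ supported only on the non-diffusing components: then $\mathbf{B}\xi_2$ vanishes and the coupling $\langle\mathbf{C}\,\cdot\,\rangle_\Omega$ together with the diffusing rows of $\mathbf{A}$ land in components on which the product norm is only increased, so monotonicity of the norm yields $\|(I-\lambda\mathbf{L}(t))\mathbf{y}\|\ge\|(I-\lambda\mathbf{A}_0(\cdot,t))\mathbf{y}_0\|$ while $\|\mathbf{y}\|=\|\mathbf{y}_0\|$. Substituting into \eqref{dissip5} gives exactly the dissipativity condition \eqref{dissip2} for $\mathbf{A}_0$ with $\kappa=\varrho$ and $\mu=\sigma$, whence Proposition \ref{Propdissbound} delivers Assumption \ref{AssumptionL0}. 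Since the subsystem has no spatial coupling, $\mathbf{U}(t,s)$ acts as the pointwise multiplication operator $(\mathbf{U}(t,s)\bm{\psi})(x)=\mathbf{U}_x(t,s)\bm{\psi}(x)$, and equivalence of the finitely many matrix norms on $\mathbb{C}^{m_0\times m_0}$ (with constants independent of $x$ and $t$) propagates the bound to every $1\le p\le\infty$ with the same rate $\mu=\sigma$.

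The main obstacle I expect is precisely the single-valuedness of the duality map, which is what legitimizes the \emph{add the two dissipativity inequalities} step. This is clean only for $1<p<\infty$ (and cleanest at $p=2$); at the endpoints $p\in\{1,\infty\}$ the duality map is multivalued, so one cannot a priori assume a common supporting functional for $\mathbf{D}_0\Delta$ and $\mathbf{L}(t)$ at once. There I would instead lean on the structural features already present — $\mathbf{A}_0(\cdot,t)$ is a multiplication operator handled pointwise, and $S_\Delta$ is a positivity-preserving contraction on $L^1$ and $L^\infty$ — recovering the contraction bound for $\mathbf{G}(t)$ by a pointwise or approximation argument rather than through a single duality functional.
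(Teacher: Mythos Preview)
Your argument for the stability of $\mathcal{W}$ is essentially the mechanism the paper invokes (via the reference to \cite{Pavel} in Proposition \ref{Propdissbound}): dissipativity of $\mathbf{D}_0\Delta+\mathbf{L}(t)-(\varrho(t)-\sigma)I$ combined with the scalar integrating factor gives the exponential bound. This part is fine.

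The gap is in your passage to the ODE subsystem. You correctly observe that for $\mathbf{y}=((\mathbf{y}_0,\mathbf{0}),0)$ the off-block terms produced by $\mathbf{L}(t)$ can only \emph{increase} the product norm, so that $\|(I-\lambda\mathbf{L}(t))\mathbf{y}\|\ge\|(I-\lambda\mathbf{A}_0(\cdot,t))\mathbf{y}_0\|$. But inequality \eqref{dissip5} reads
\[
(1-\lambda(\varrho(t)-\sigma))\|\mathbf{y}_0\|=(1-\lambda(\varrho(t)-\sigma))\|\mathbf{y}\|\le\|(I-\lambda\mathbf{L}(t))\mathbf{y}\|,
\]
and you cannot replace the right-hand side by the \emph{smaller} quantity $\|(I-\lambda\mathbf{A}_0)\mathbf{y}_0\|$ while preserving the inequality. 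The monotonicity runs in the wrong direction for this substitution, so \eqref{dissip2} for $\mathbf{A}_0$ does not follow from the resolvent inequality in the way you claim.

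The paper avoids this by working with the duality-map characterization \eqref{dissip3} rather than the resolvent form. For $1<p<\infty$ the dual element of $\mathbf{y}=((\mathbf{y}_0,\mathbf{0}),0)$ is $\mathbf{y}^\ast=((\mathbf{y}_0^\ast,\mathbf{0}),0)$, and the pairing $\langle\mathbf{y}^\ast,(\mathbf{L}(t)-(\varrho(t)-\sigma)I)\mathbf{y}\rangle\le 0$ automatically annihilates the off-block contributions (the diffusing rows of $\mathbf{A}\mathbf{y}_0$ and the scalar $\langle\mathbf{C}\mathbf{y}_0\rangle_\Omega$ are paired with zero), leaving exactly $\langle\mathbf{y}_0^\ast,(\mathbf{A}_0-(\varrho(t)-\sigma)I)\mathbf{y}_0\rangle\le 0$. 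This gives dissipativity of the multiplication operator for the given $p$; Lemma \ref{quadformLem} then converts it to the pointwise quadratic-form condition $Q\le 0$, which is $p$-independent and yields Assumption \ref{AssumptionDp} for every $1\le p\le\infty$ at once --- handling the endpoints $p\in\{1,\infty\}$ and making your final appeal to equivalence of matrix norms unnecessary.
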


Note that, as in the case of a stationary shadow solution, Assumption \ref{AssumptionD1p} implies both stability conditions \ref{AssumptionL1p}--\ref{AssumptionL0}. In view of Proposition \ref{Propdissfull}, Assumptions \ref{AssumptionL1p}--\ref{AssumptionL0} can be replaced by the above dissipativity assumption. 

\begin{corollary} \label{TheoremDissip}
Let Assumptions \ref{AssumptionN}--\ref{AssumptionB} and \ref{AssumptionD1p} hold for some $1 \le p \le \infty$ with $p>n/2$ if $n \ge 2$ and $\sigma \ge 0$. Then the assertion of Theorem \ref{Theorem} remains valid and, if $p= \infty$, the assertion of Theorem \ref{TheoremToy} is valid.
\end{corollary}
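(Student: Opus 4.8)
The plan is to deduce the corollary from the two main theorems, using dissipativity only to manufacture the abstract stability assumptions those theorems require. The decisive tool is Proposition \ref{Propdissfull}, whose hypotheses hold here: Assumption \ref{AssumptionD1p} is assumed outright, and the uniform boundedness of the Jacobian blocks $\mathbf{A}, \mathbf{B}, \mathbf{C}, D$ is precisely what Assumption \ref{AssumptionB} supplies. Applying it yields two conclusions simultaneously: the shadow evolution system $\mathcal{W}$ satisfies the stability Assumption \ref{AssumptionL1p} with the same exponent $p$ and rate $\sigma \ge 0$, while the ODE subsystem $\mathcal{U}$ satisfies Assumption \ref{AssumptionL0} for every $1 \le p' \le \infty$ with $\mu = \sigma$. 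The reduction from dissipativity to the stability conditions is thereby complete, and what remains is parameter bookkeeping.

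I would then split on the exponent $p$. If $1 \le p < \infty$, Proposition \ref{Propdissfull} furnishes Assumptions \ref{AssumptionL1p} and \ref{AssumptionL0} at this finite $p$ with $\mu = \sigma$; since $p > n/2$ is assumed for $n \ge 2$, the relation $1/r + n/(2p) < 1$ is solvable for some $r \in (1,\infty)$, so all hypotheses of Theorem \ref{Theorem} are in force and the estimates \eqref{longest} follow verbatim. If $p = \infty$, Proposition \ref{Propdissfull} gives Assumption \ref{AssumptionL1p} with $p = \infty$, which together with Assumptions \ref{AssumptionN}--\ref{AssumptionB} is exactly the hypothesis of Theorem \ref{TheoremToy}; hence \eqref{longestToy}, and \eqref{globalestToy} when $\sigma > 0$, hold. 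To recover the assertion of Theorem \ref{Theorem} in this case as well, I would argue \emph{a fortiori}: Theorem \ref{TheoremToy} is valid for every $\alpha \in [\alpha_0, 1)$ on $T \le \varepsilon^{\alpha-1}$ with rate $\varepsilon^{1-\alpha}$, so for a target exponent $\beta$ in Theorem \ref{Theorem} the choices $\alpha = 3\beta - 2$ and $\alpha = 4\beta - 3$ reproduce the rates $\varepsilon^{3(1-\beta)}$ and $\varepsilon^{4(1-\beta)}$ of \eqref{longest} on the interval $T \le \varepsilon^{\beta-1}$.

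The routine verifications, namely solvability for $r$, the local Lipschitz bounds, and the uniform boundedness feeding Proposition \ref{Propdissfull}, are immediate, so no hard estimate arises. The main obstacle is exactly the $p = \infty$ recovery of Theorem \ref{Theorem}: one cannot apply that theorem directly, since dissipativity in $L^\infty$ yields stability of $\mathcal{W}$ only at the single exponent $p = \infty$ and not at any finite $p$. The \emph{a fortiori} route above must therefore be used, and there one has to ensure the reduced parameters $3\beta - 2$ and $4\beta - 3$ still lie in the admissible window $[\alpha_0, 1)$ of Theorem \ref{TheoremToy}; since $\alpha_0 = 1 - \delta_0$ with $\delta_0 \le 1/2$, this is arranged by taking $\delta_0$ maximal and, in the $p = \infty$ setting where $r \in (1,\infty)$ is free, choosing $r$ large enough that every admissible $\beta$ satisfies $4\beta - 3 \ge \alpha_0$. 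Finally one checks that the constant $C$ and threshold $\varepsilon_0$ can be chosen uniformly over the relevant ranges.
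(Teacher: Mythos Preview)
Your proof is correct and follows the same route as the paper: invoke Proposition \ref{Propdissfull} to manufacture Assumptions \ref{AssumptionL1p} and \ref{AssumptionL0} from dissipativity, then feed these into Theorems \ref{TheoremToy} and \ref{Theorem}. The paper's own proof is a single sentence doing exactly this and nothing more.

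Where you go beyond the paper is the $p=\infty$ case for Theorem \ref{Theorem}. The paper's proof does not address this at all; it tacitly reads the clause ``the assertion of Theorem \ref{Theorem} remains valid'' as applying only to the finite-$p$ range where that theorem is stated, with the $p=\infty$ case covered separately by Theorem \ref{TheoremToy}. Your \emph{a fortiori} derivation of the weaker Theorem \ref{Theorem} rates from the stronger Theorem \ref{TheoremToy} rate is correct and makes the statement literally true for $p=\infty$ as well, but the final paragraph is more tangled than necessary: the parameter $r$ plays no role in Theorem \ref{TheoremToy}, so there is nothing to tune there. One simply observes that for $\beta \ge 7/8$ both reparametrizations $3\beta-2$ and $4\beta-3$ land in $[\alpha_0,1)$ with $\alpha_0=1/2$, and on $T\le\varepsilon^{\beta-1}\le\varepsilon^{(4\beta-3)-1}$ the bound $C\varepsilon^{1-(4\beta-3)}=C\varepsilon^{4(1-\beta)}$ dominates; taking $\alpha_0'=7/8$ as the threshold for Theorem \ref{Theorem} then suffices.
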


\begin{proof}
Let Assumption \ref{AssumptionD1p} hold for the exponent $1 \le p \le \infty$ and $\sigma \ge 0$ which are required in the theorems. By Proposition \ref{Propdissfull}, Assumption \ref{AssumptionL1p} is fulfilled with the same parameters and Assumption \ref{AssumptionL0} is satisfied.
\end{proof}

For the proof of Proposition \ref{Propdissfull}, we study several equivalent formulations of dissipativity condition \eqref{dissip2} and, in particular, we show that Assumption \ref{AssumptionDp} is independent of $1 \le p \le \infty$. \\

By \cite[Remark 1.2]{Pavel}, condition \eqref{dissip2} is equivalent to dissipativity of $\mathbf{A}_0(\cdot,t)-(\kappa(t)-\mu)I$ on $L^p(\Omega)^{m_0}$ in the sense of inequality \eqref{dissip1} for each time $t \in \mathbb{R}_{\ge 0}$. It turns out that there is even a simpler criterion for dissipativity of multiplication operators on $L^p(\Omega)^{m_0}$ which is independent of the exponent $1 \le p \le \infty$. More precisely, inequality \eqref{dissip2} can be verified via pointwise estimates of the corresponding quadratic form
\begin{align}
Q(x, t): \mathbb{R}^{m_0}  \to \mathbb{R}, \qquad \overline{\mathbf{y}} \mapsto  \overline{\mathbf{y}}^T (\mathbf{A}_0(x,t)- (\kappa(t)-\mu) I)  \overline{\mathbf{y}}. \label{quadform}
\end{align}
Such a condition is already used in the time-independent case in \cite[Propositions 6, 7]{Ouhabaz1999}. Moreover, pointwise estimates of the latter quadratic form are a well-known technique in the context of classical solutions to preserve contractivity of the corresponding evolution system \cite[Theorem 2.3]{Kresin}.

\begin{lemma} \label{quadformLem}
Let $\kappa, \mu$ and $1 \le p \le \infty$ be given by Assumption \ref{AssumptionDp} for some measurable, bounded function $\mathbf{A}_0: \Omega \times \mathbb{R}_{\ge 0} \to \mathbb{R}^{m_0 \times m_0}$ and let $Q$ be defined as above in \eqref{quadform}. Then dissipativity condition \eqref{dissip2} on $L^p(\Omega)^{m_0}$ is equivalent to $Q(x,t) \le 0$ for a.e. $(x,t) \in \Omega \times \mathbb{R}_{\ge 0}$. Moreover, inequality \eqref{dissip2} holds for all $1 \le p \le \infty$ if and only if it holds for one exponent $p$.
\end{lemma}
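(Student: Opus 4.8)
The plan is to collapse the operator statement to a pointwise matrix statement and to treat the time variable separately. By \cite[Remark 1.2]{Pavel}, inequality \eqref{dissip2} at a fixed time $t$ is equivalent to dissipativity in the sense of \eqref{dissip1} of the bounded multiplication operator induced by the matrix field $\mathbf{M}(\cdot,t):=\mathbf{A}_0(\cdot,t)-(\kappa(t)-\mu)I$, that is, $\|\mathbf{y}\|_{L^p(\Omega)^{m_0}}\le\|(I-\lambda\mathbf{M}(\cdot,t))\mathbf{y}\|_{L^p(\Omega)^{m_0}}$ for all $\lambda>0$ and $\mathbf{y}$. Hence it suffices to prove, for a fixed bounded field $\mathbf{M}\in L^\infty(\Omega)^{m_0\times m_0}$, that operator dissipativity \eqref{dissip1} on $L^p(\Omega)^{m_0}$ is equivalent to the pointwise condition $\overline{\mathbf{y}}^T\mathbf{M}(x)\overline{\mathbf{y}}\le0$ for all $\overline{\mathbf{y}}\in\mathbb{R}^{m_0}$ and a.e. $x$, i.e. $Q(x,t)\le0$. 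Since this pointwise condition never refers to $p$, establishing both implications for an arbitrary exponent yields at once the asserted independence of $1\le p\le\infty$. The time variable is handled by Fubini: the fixed-$t$ equivalence gives $Q(\cdot,t)\le0$ a.e.\ in $x$ for every $t$, and joint measurability of $(x,t)\mapsto Q(x,t)$ records this as $Q\le0$ a.e.\ on $\Omega\times\mathbb{R}_{\ge0}$.

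For the direction \emph{pointwise $\Rightarrow$ operator}, I would start from the identity $|(I-\lambda\mathbf{M}(x))\overline{\mathbf{y}}|_2^2=|\overline{\mathbf{y}}|_2^2-2\lambda\,\overline{\mathbf{y}}^T\mathbf{M}(x)\overline{\mathbf{y}}+\lambda^2|\mathbf{M}(x)\overline{\mathbf{y}}|_2^2$, valid for a.e.\ $x$ and every $\overline{\mathbf{y}}$, where $|\cdot|_2$ is the Euclidean norm. When $Q(x,t)\le0$ the middle term is nonnegative, so $|(I-\lambda\mathbf{M}(x))\overline{\mathbf{y}}|_2\ge|\overline{\mathbf{y}}|_2$ pointwise for every $\lambda>0$. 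Inserting $\overline{\mathbf{y}}=\mathbf{y}(x)$, raising to the $p$-th power and integrating (taking the essential supremum for $p=\infty$) gives $\|(I-\lambda\mathbf{M})\mathbf{y}\|_{L^p}\ge\|\mathbf{y}\|_{L^p}$, and this argument is simultaneous in all $p$.

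The direction \emph{operator $\Rightarrow$ pointwise} I would prove by contraposition, avoiding any measurable selection of eigenvectors. Suppose $Q(\cdot,t)\le0$ fails on a set of positive measure. Fix a countable dense subset $\{\overline{\mathbf{y}}_k\}_{k\in\mathbb{N}}$ of the Euclidean unit sphere of $\mathbb{R}^{m_0}$; by continuity of $\overline{\mathbf{y}}\mapsto\overline{\mathbf{y}}^T\mathbf{M}(x,t)\overline{\mathbf{y}}$ one has $\{x:\exists\,\overline{\mathbf{y}},\ \overline{\mathbf{y}}^T\mathbf{M}(x,t)\overline{\mathbf{y}}>0\}=\bigcup_{k,j}\{x:\overline{\mathbf{y}}_k^T\mathbf{M}(x,t)\overline{\mathbf{y}}_k>1/j\}$, so some $G:=\{x:\overline{\mathbf{y}}_k^T\mathbf{M}(x,t)\overline{\mathbf{y}}_k>\eta\}$ with $\eta:=1/j$ has positive measure. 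Testing \eqref{dissip1} with the single-direction function $\mathbf{y}=\overline{\mathbf{y}}_k\mathds{1}_G$ and using the expansion above together with the uniform bound $|\mathbf{M}(x,t)\overline{\mathbf{y}}_k|_2\le K:=\|\mathbf{M}(\cdot,t)\|_\infty$, I obtain $|(I-\lambda\mathbf{M}(x,t))\overline{\mathbf{y}}_k|_2^2<1-2\lambda\eta+\lambda^2K^2$ for a.e.\ $x\in G$. Choosing $\lambda\in(0,\eta/K^2)$ makes the right-hand side strictly below $1$ uniformly on $G$, hence $\|(I-\lambda\mathbf{M})\mathbf{y}\|_{L^p}<|G|^{1/p}=\|\mathbf{y}\|_{L^p}$ (with the obvious ess-sup modification for $p=\infty$), contradicting \eqref{dissip1}. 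Therefore $Q(\cdot,t)\le0$ a.e.

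The main obstacle is this last direction: the delicate points are extracting a genuinely usable test function from the mere failure of negativity without invoking a measurable eigenvector field (which the countable dense family of directions circumvents) and guaranteeing that one value of $\lambda$ produces a strict contraction simultaneously on a whole positive-measure set. The latter is exactly where boundedness of $\mathbf{A}_0$, through the uniform bound $K$, is used to control the quadratic remainder in $\lambda$; the remaining steps are routine and transparently $p$-independent, which is what delivers the final claim on equivalence across all exponents.
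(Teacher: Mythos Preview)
Your argument is correct, and it is genuinely more elementary than the paper's. Both proofs begin by reducing via \cite[Remark 1.2]{Pavel} to dissipativity of the multiplication operator by $\mathbf{M}(\cdot,t)=\mathbf{A}_0(\cdot,t)-(\kappa(t)-\mu)I$. From there the paper proceeds through the duality map: for $1<p<\infty$ it rewrites dissipativity as the integral condition $\int_\Omega (\mathbf{y}^\ast)^T\mathbf{M}\,\mathbf{y}\,\mathrm{d}x\le 0$ and checks both implications against that, then treats the endpoints $p\in\{1,\infty\}$ separately by passing to the limit in $p$ (forward direction) and by an ad hoc quadratic expansion with a specially designed test function (converse). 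You instead exploit the pointwise Euclidean identity $|(I-\lambda\mathbf{M}(x))\overline{\mathbf{y}}|_2^2=|\overline{\mathbf{y}}|_2^2-2\lambda\,\overline{\mathbf{y}}^T\mathbf{M}(x)\overline{\mathbf{y}}+\lambda^2|\mathbf{M}(x)\overline{\mathbf{y}}|_2^2$ for both directions and for every $p$ simultaneously, and in the converse you replace the paper's weighted test function $\mathds{1}_{\Omega_2}(Q\overline{\mathbf{y}})^{-1/p}\overline{\mathbf{y}}$ by a constant-direction indicator $\overline{\mathbf{y}}_k\mathds{1}_G$ obtained from a countable dense family on the sphere. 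Your route avoids the duality map entirely and needs no separate endpoint analysis, so the $p$-independence drops out automatically; the paper's route, by contrast, makes the connection to the abstract dissipativity characterization explicit and yields the auxiliary integral inequality \eqref{dissip3}, which is reused later in the proof of Proposition \ref{Propdissfull}.
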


\begin{proof}
By \cite[Remark 1.2]{Pavel}, estimate \eqref{dissip2} is equivalent to dissipativity of $\mathbf{A}_0(\cdot, t)-(\kappa(t)-\mu)I$. The latter means that for all $\lambda \in \mathbb{R}_{> 0}$, $\mathbf{y} \in L^p(\Omega)^{m_0}$, and $t \in \mathbb{R}_{\ge 0}$ there holds
\begin{align}
\|\mathbf{y}\|_{L^p(\Omega)^{m_0}} \le \|(I-\lambda (\mathbf{A}_0(\cdot, t)-(\kappa(t)-\mu) I))\mathbf{y}\|_{L^p(\Omega)^{m_0}}. \label{dissip2a}
\end{align}
A further characterization of dissipativity via the duality map $J$ as in \cite[Ch. II, Proposition 3.23]{Engel} can be used to rewrite this condition. For $1 < p< \infty$, the duality set is just a singleton by \cite[Ch. II, Example 3.26 (ii)]{Engel}. More precisely, $J(\mathbf{y}) = \{\mathbf{y}^\ast\}$ for $\mathbf{y}^\ast \in L^q(\Omega)^{m_0}$ where $\mathbf{y}^\ast = \mathbf{0}$ for $\mathbf{y}=\mathbf{0}$ and
\[
\mathbf{y}^\ast = \frac{\mathbf{y}|\mathbf{y}|^{p-2}}{\|\mathbf{y}\|_{L^p(\Omega)^{m_0}}^{p-2}} \qquad \text{for} \quad \mathbf{y} \not=\mathbf{0}. 
\]
Remember that the dual pairing satisfies $\langle \mathbf{y}^\ast, \mathbf{y} \rangle = \|\mathbf{y}^\ast\|_{L^q(\Omega)^{m_0}}^2 = \| \mathbf{y}\|_{L^p(\Omega)^{m_0}}^2$ where $p$ and $q$ are conjugate exponents. Hence, inequalities \eqref{dissip2}--\eqref{dissip2a} are equivalent to the integral condition
\begin{equation}
\int_\Omega (\mathbf{y}^\ast)^T \left(\mathbf{A}_0(\cdot,t)- (\kappa(t)-\mu) I \right) \mathbf{y} \; \mathrm{d}x \le 0 \qquad \forall \; \mathbf{y} \in L^p(\Omega)^{m_0},  t \in \mathbb{R}_{\ge 0}. \label{dissip3}
\end{equation}
Let us first assume $Q \le 0$ for a.e. $(x,t)\in \Omega \times \mathbb{R}_{\ge 0}$. 
We multiply the inequality $Q \le 0$ with a symmetric choice of vectors $ \overline{\mathbf{y}}(x)$ instead of $\mathbf{y}^\ast$ and $\mathbf{y}$ and obtain inequality \eqref{dissip3} for $1 < p < \infty$ after integration. For $p \in \{1, \infty\}$, we use continuity of the $L^p$ norm with respect to $p$ since we already established estimate \eqref{dissip2a} for all $1 < p < \infty$. Since $\mathbf{A}_0-\kappa I$ is bounded, $I-\lambda(\mathbf{A}_0(t)-(\kappa(t)-\mu)I)$ is invertible for small $\lambda>0$, and %due to Neumann's series. 
by \cite[Ch. II, Proposition 3.14]{Engel}, the latter operator is invertible for all $\lambda>0$ and estimate \eqref{dissip2a} yields
\begin{align*}
\| (I-\lambda(\mathbf{A}_0(\cdot, t)-(\kappa(t)-\mu) I))^{-1} \mathbf{y} \|_{L^{p}(\Omega)^{m_0}} & \le \| \mathbf{y}\|_{L^{p}(\Omega)^{m_0}}
\end{align*}
for all $\mathbf{y} \in L^{p}(\Omega)^{m_0}$ and $1 < p< \infty$. The result follows by letting $p \to 1$ or $p \to \infty$ where \cite[Theorem 2.14]{Adams} applies. Thus, $Q \le 0$ implies dissipativity.\\
Now, let dissipativity inequality \eqref{dissip2} be fulfilled and assume $Q$ is not non-positive, i.e., there is a set $\Omega_1 \subset \Omega$ with $|\Omega_1|>0$ and some time point $t \ge 0$ as well as $\overline{\mathbf{y}} \in \mathbb{R}^{m_0} \setminus \{\mathbf{0}\}$ such that
\[
Q(x,t) \overline{\mathbf{y}} =\overline{\mathbf{y}}^T (\mathbf{A}_0(x,t)- (\kappa(t)-\mu) I)  \overline{\mathbf{y}} >0 
\]
holds for a.e. $x \in \Omega_1$. Possibly choosing a smaller set $\Omega_2 \subset \Omega_1$ with positive measure, we find uniform bounds 
$
0 < Q_0 \le Q(\cdot,t)\overline{\mathbf{y}} \le Q_1 < \infty
$
which hold almost everywhere on $\Omega_2$. Let us consider $\mathbf{y}_p \in L^p(\Omega)^{m_0}$ for $p< \infty$ given by
$
\mathbf{y}_p(x):= \mathds{1}_{\Omega_2}(x) (Q(x,t) \overline{\mathbf{y}})^{-1/p}\overline{\mathbf{y}}. 
$
This bounded vector-valued function satisfies
\begin{align*}
\int_\Omega (\mathbf{y}_p^\ast)^T \left(\mathbf{A}_0(\cdot,t)- (\kappa(t)-\mu) I \right) \mathbf{y}_p \; \mathrm{d}x & = \int_{\Omega_2} |\overline{\mathbf{y}}|^{p-2} \|\mathbf{y}_p\|_{L^p(\Omega_2)^{m_0}}^{2-p} \; \mathrm{d}x = \| (Q(\cdot,t) \overline{\mathbf{y}})^{-1} \|_{L^p(\Omega_2)^{m_0}}^{2-p} >0
\end{align*}
which is a contradiction to condition \eqref{dissip3}, and thus to \eqref{dissip2} for $1 < p < \infty$. For $p \in \{1, \infty\}$, let us consider $\mathbf{y}_2 \in L^\infty(\Omega)^{m_0}$ in preceding definition. Then
\begin{align*}
\left| (I-\lambda (\mathbf{A}_0(\cdot, t)-(\kappa(t)-\mu) I))\mathbf{y}_2\right|^2 & =|\mathbf{y}_2|^2+ \lambda^2 \left|(\mathbf{A}_0(\cdot, t)-(\kappa(t)-\mu) I)\mathbf{y}_2 \right|^2 - 2 \lambda
\end{align*}
holds on the set $\Omega_2$. For small enough $\lambda> 0$, the right-hand side of the latter equation is smaller than $|\mathbf{y}_2|^2$ since $\mathbf{A}_0, \kappa, \mathbf{y}_2$ are bounded functions on $\Omega_2$. This leads to a contradiction to condition \eqref{dissip2a}.% since
%\[
%\left\| \left(|\mathbf{y}_2|^2 \right)^{1/2} \right\|_{L^p(\Omega_2)} \le \left\| \left(|\mathbf{y}_2|^2 -  \lambda \right)^{1/2} \right\|_{L^p(\Omega_2)}
%\]
\end{proof}

Hence, Assumption \ref{AssumptionDp} is independent of $1 \le p \le \infty$. Note that Assumption \ref{AssumptionL0} only considers the subsystem $\mathbf{A}_0$ of $\mathbf{A}$ of non-diffusing components. %If we are able to verify dissipativity in $L^p(\Omega)^{m}$ for some (and hence all) $1 \le p \le \infty$, then Assumption  \ref{AssumptionL0} is satisfied. 
Since $\mathbf{A}_0$ is non-symmetric in general, one can verify definiteness of the corresponding quadratic form $Q$ defined in \eqref{quadform} by looking equivalently on the real eigenvalues of the symmetric part 
\[
\frac{1}{2} \left(\mathbf{A}_0(x,t) +  \mathbf{A}_0(x,t)^T \right) - (\kappa(t)-\mu) I \in \mathbb{R}^{m_0 \times m_0}.
\]
Non-positivity of its eigenvalues $\lambda(x,t)$ pointwise for a.e. $(x,t) \in \Omega \times \mathbb{R}_{\ge 0}$ implies the condition $Q \le 0$ and, hence, stability by Lemma \ref{quadformLem}.

\begin{proof}[Proof of Proposition \ref{Propdissfull}]
By the same reasoning as in the proof of Proposition \ref{Propdissbound}, we obtain uniform boundedness of the evolution system $\mathcal{W}$ for the chosen exponent $p$. Let us now focus on the evolutionary subsystem $\mathcal{U}$. In view of Lemma \ref{quadformLem} and Proposition \ref{Propdissbound}, it remains to check a dissipativity condition for the chosen value $p$. For $1 < p < \infty$, we infer from condition \eqref{dissip3} that dissipativity of $\mathbf{L}(t)- (\varrho(t)-\sigma) I$ in $L^p(\Omega)^m \times \mathbb{R}$ implies dissipativity of the corresponding subsystem $\mathbf{A}(\cdot,t)- (\varrho(t)-\sigma) I$ in $L^p(\Omega)^m$ since the duality set is given by $\mathbf{y}^\ast = (\mathbf{y}_1^\ast, \mathbf{y}_2)^T$ for $\mathbf{y} = (\mathbf{y}_1, \mathbf{y}_2)^T \in L^p(\Omega)^m \times \mathbb{R}$. For $p \in \{1, \infty\}$, we follow the proof of Lemma \ref{quadformLem}. In this way, condition \eqref{dissip2a} can be shown by contradiction, assuming $Q \le 0$ does not hold almost everywhere. Hence, Assumption \ref{AssumptionDp} is satisfied for all $1 \le p \le \infty$ and $\kappa \equiv \rho, \mu=\sigma$, and Proposition \ref{Propdissbound} yields the desired result.
\end{proof}

\section{Model examples} \label{sec:modelex}

The main results Theorem \ref{TheoremToy} and Theorem \ref{Theorem} provide information on the long-term dynamics of the reaction-diffusion-ODE system \eqref{fullsys} based on results obtained for its shadow limit \eqref{shadow}. In the context of classical shadow systems, there are diverse examples to which the results can be applied \cite{He3, Kondo, Miy05, Peng, Wei}. Recall that system \eqref{fullsys} allows for nonlinearities which explicitly depend on space and time. In this section, we present two further examples in more detail, including the case of non-diffusing components. The first example is a linear model which shows necessity of the stability assumption \ref{AssumptionL1p} in the case of a space-dependent shadow solution. The second model which is treated analytically is of predator-prey-type and exemplifies the global convergence result in Theorem \ref{TheoremToy}. 

\subsection{Linear model} \label{sec:expgrowthlin} 

We focus on an equation for $v_{\varepsilon}$ only,
\begin{align}
   \frac{\partial v_{\varepsilon}}{\partial t} - \frac{1}{\varepsilon} \Delta v_{\varepsilon} & =  D(x) v_{\varepsilon}  \quad  \mbox{in} \quad \Omega \times  \mathbb{R}_{>0},  \qquad v_{\varepsilon}(\cdot, 0) =v^0 \quad \text{in} \quad \Omega, \qquad  \frac{\partial v_{\varepsilon}}{\partial \nu}  = 0  \quad   \mbox{on} \quad \partial \Omega \times  \mathbb{R}_{>0}, \label{vspacedep}
\end{align}
for a space-dependent coefficient $D := w_1 + w_1^2 \in L^\infty(\Omega)$. Here, we take the eigenfunction $v^0:=w_1$ which corresponds to the first positive eigenvalue $\lambda_1$ of $-\Delta$ on $\Omega=(0,1)$, i.e., $w_1(x)= \sqrt{2} \cos(\pi x)$  and $\lambda_1 = \pi^2$ by equation \eqref{spect}. Of course, equation \eqref{vspacedep} can be extended to a full linear reaction-diffusion-ODE system by setting $A, B, C=0$ in \eqref{Jacobian}. The corresponding shadow limit is given by $v=0$ since $\langle v^0 \rangle_\Omega=0$ and $\langle D \rangle_\Omega=1$. Proposition \ref{HardtLemma} implies $\sigma(\mathbf{L}) = \{0,1\}$ and Assumption \ref{AssumptionL1p} is not satisfied by Corollary \ref{stability}. We will verify that the error $V_\varepsilon=v_{\varepsilon}-\psi_\varepsilon$ grows exponentially in time. By H\"older's inequality, it remains to show exponential growth of the spatial mean value $\langle V_\varepsilon \rangle_\Omega= \langle v_{\varepsilon} \rangle_\Omega$. \\

The solution $v_{\varepsilon}$ is given by the implicit integral equation 
\[
v_{\varepsilon}(x, t) = S_\Delta(t/{\varepsilon}) v^0(x) + \int_0^t S_\Delta((t-\tau)/{\varepsilon}) D(x) v_{\varepsilon}(x,\tau) \; \mathrm{d}\tau
\]
which can be solved by a Picard iteration. According to \cite[Part II, Theorem 1]{Rothe}, we define approximations $v_{\varepsilon}^{(j)}(\cdot,t) \in L^\infty(\Omega)$ for $j \in \mathbb{N}$ recursively given by
\begin{align*}
v_{\varepsilon}^{(1)}(\cdot, t) &= S_\Delta(t/{\varepsilon}) v^0 =  \mathrm{e}^{-\lambda_1t/\varepsilon} w_1,\\
v_{\varepsilon}^{(j+1)}(\cdot,t) &= S_\Delta(t/{\varepsilon}) v^0 + \int_0^t  S_\Delta((t-\tau)/{\varepsilon}) \left[ D(\cdot)  v_{\varepsilon}^{(j)}(\cdot,\tau) \right] \; \mathrm{d}\tau.
\end{align*}
We write $D= w_1 + w_1^2 =  w_0 + w_1 + \sqrt{2}^{-1} w_2$, using a product formula for the eigenfunctions $w_j$ of $-\Delta$ given by $w_j(x) = \sqrt{2} \cos(j \pi x)$ for $j \in \mathbb{N}$ and $w_0 \equiv 1$ for $j=0$, see equation \eqref{spect}. We iteratively multiply the coefficient $D$ with $v_{\varepsilon}^{(j)}$ and use that products $w_jw_i$ can be written as a linear combination of $w_{j+i}$ and $w_{|j-i|}$. This procedure yields
\begin{align*}
v_{\varepsilon}^{(2)}(\cdot,t) & = \mathrm{e}^{-\lambda_1 t/{\varepsilon}} w_1 +  \int_0^t  S_\Delta((t-\tau)/{\varepsilon}) f_1^{(1)}(\tau)\left[ w_1 + w_1^2 + \sqrt{2}^{-1} w_1w_2 \right] \; \mathrm{d}\tau\\
& =  \left( \int_0^t f_1^{(1)}(\tau) \; \mathrm{d}\tau \right) w_0 +  \left(\mathrm{e}^{-\lambda_1 t/{\varepsilon}} + \int_0^t\mathrm{e}^{-\lambda_1 (t-\tau)/{\varepsilon}} f_1^{(1)}(\tau) \; \mathrm{d}\tau \right) w_1 \\
& \quad + \int_0^t  S_\Delta((t-\tau)/{\varepsilon}) f_1^{(1)}(\tau) h^{(2)} \; \mathrm{d}\tau,
\end{align*}
where $v_{\varepsilon}^{(1)}(\cdot,t) =: f_1^{(1)}(t) w_1$, $h^{(2)} =  (w_1 +\sqrt{2} w_2 +w_3)/2$ and $w_0 \equiv 1$. To understand the next step, let us rewrite the second approximation as
\[
v_{\varepsilon}^{(2)} (\cdot,t) = f_0^{(2)}(t) w_0 + f_1^{(2)}(t) w_1 + f_2^{(2)}(t) h^{(2)}
\]
and note that the coefficients of the eigenfunctions in $h^{(2)}$ are all positive. % and $h^{(2)}$ includes $w_1$ as well. 
Considering spatial means, $\langle w_j \rangle_\Omega=0$ for all $j \in \mathbb{N}$ implies $\langle v_{\varepsilon}^{(1)} \rangle_\Omega=0$ and 
\[
\langle v_{\varepsilon}^{(2)} (\cdot, t) \rangle_\Omega =  \int_0^t \mathrm{e}^{-\lambda_1 \tau/{\varepsilon}} \; \mathrm{d}\tau. 
\] 
Using again $D = w_0 + w_1+ \sqrt{2}^{-1} w_2$, this leads to the third approximation of the form
\begin{align*}
v_{\varepsilon}^{(3)}(\cdot,t) & =  \left( \int_0^t f_0^{(2)}(\tau) + f_1^{(2)}(\tau) \; \mathrm{d}\tau \right) w_0\\
& \quad + \left( \mathrm{e}^{-\lambda_1t/{\varepsilon}} + \int_0^t  \mathrm{e}^{- \lambda_1(t-\tau)/{\varepsilon}} (f_0^{(2)}(\tau) + f_1^{(2)}(\tau) )\; \mathrm{d}\tau \right) w_1 + f_3^{(3)}(t) h^{(3)},
\end{align*}
where $h^{(3)}$ is a sum of positive multiples of $w_j$ for $j=0, \dots, 4$ and $f_3^{(3)} \ge 0$ is a continuous function in time. Estimating from below, we successively gain for all $j \in \mathbb{N}$ (by setting $f_0^{(1)} \equiv 0$)
\begin{align*}
f_0^{(j+2)}(t) \ge \int_0^t f_0^{(j+1)}(\tau) + f_1^{(j+1)}(\tau) \; \mathrm{d}\tau \ge \int_0^t \left( \mathrm{e}^{-\lambda_1 \tau/{\varepsilon}} +  \int_0^\tau f_0^{(j)}(r) + f_1^{(j)}(r) \; \mathrm{d}r \right) \mathrm{d}\tau. 
\end{align*}
Starting from the innermost double integral and applying Fubini's rule inductively, this yields
\[
 f_0^{(j+2)}(t) \ge \int_0^t f_0^{(j+1)}(\tau) + f_1^{(j+1)}(\tau) \; \mathrm{d}\tau \ge \int_0^t \sum_{i=0}^j \frac{(t-\tau)^i}{i!} \mathrm{e}^{-\lambda_1 \tau/{\varepsilon}}  \mathrm{d}\tau.
 \]
Since $v_{\varepsilon}^{(j)}$ converges to $v_{\varepsilon}$ in $L^\infty(\Omega_T)$ as shown in \cite[Part II, Theorem 1]{Rothe}, we obtain a lower bound due to 
\[
\int_0^t \sum_{i=0}^j \frac{(t-\tau)^i}{i!} \mathrm{e}^{-\lambda_1 \tau/{\varepsilon}}  \mathrm{d}\tau \le  f_0^{(j+2)}(t) \le \langle v_{\varepsilon}^{(j+2)}(\cdot,t) \rangle_\Omega \to \langle v_{\varepsilon}(\cdot,t) \rangle_\Omega.  
\]
The theorem of monotone convergence leads to exponential growth of $\langle V_{\varepsilon} \rangle_\Omega$ since
\begin{align*}
 \langle v_{\varepsilon}(\cdot,t) \rangle_\Omega \ge \int_0^t \mathrm{e}^{t-\tau} \mathrm{e}^{-\lambda_1 \tau/{\varepsilon}} \; \mathrm{d}\tau \ge C\varepsilon \left( \mathrm{e}^t - 1 \right).
\end{align*}
This induces exponential growth of $t \mapsto \|v_{\varepsilon}(\cdot, t)\|_{L^\infty(\Omega)}$ by H\"older's inequality.

\subsection{Predator-prey model} \label{sec:PPmodel} 
Consider a closed system describing predator-prey dynamics, with a predator denoted by $u_\varepsilon$ and a mobile prey $v_\varepsilon$. In fresh-water ecology, a biological example can be given by Hydra and Daphnia where the predator Hydra is sedentary, i.e., $D=0$ \cite[Example (b)]{Mott}. The following model adapted from \cite{Mott} includes both cases $D=0$ and $D>0$. The differential equations read
\begin{align} \label{PP}
\begin{split}
     \frac{\partial u_\varepsilon}{\partial t} -D \Delta u_\varepsilon &= -p u_\varepsilon + b v_\varepsilon  \qquad \quad \; \; \mbox{in} \quad \Omega_T,   \quad \; \; u_\varepsilon (\cdot, 0) =u^0  \quad \text{in} \quad \Omega,  \quad \;
 \frac{\partial u_\varepsilon}{\partial \nu} =0 \quad \mbox{on} \quad \partial \Omega \times (0,T), \\
        \frac{\partial v_\varepsilon}{\partial t} - \frac{1}{\varepsilon} \Delta v_\varepsilon &=   (d -a u_\varepsilon -c v_\varepsilon) v_\varepsilon  \quad \mbox{in} \quad \Omega_T, \quad \; \; v_\varepsilon (\cdot, 0) =v^0 \quad \text{in} \quad \Omega,  \quad \; \;
 \frac{\partial v_\varepsilon}{\partial \nu} =0 \quad \mbox{on} \quad \partial \Omega \times (0,T), 
 \end{split}
\end{align}
where $u_\varepsilon$ is endowed with a zero flux boundary condition if $D>0$. Here, $a,b,c,d,p>0$ are constants and the initial values $u^0, v^0 \ge 0$ are bounded as well as non-negativity almost everywhere in $\Omega$ with $\langle v^0 \rangle_\Omega>0$. The corresponding shadow limit is given by
\begin{align} \label{shadPP}
\begin{split}
\frac{\partial u}{\partial t} -D \Delta u &= -p u + b v \qquad \qquad \; \; \mbox{in} \quad \Omega_T, \qquad   \; \, u (\cdot, 0) =u^0  \quad \text{in} \quad \Omega, \\
   \frac{\mathrm{d} v}{\mathrm{d} t} &= (d - a \langle u \rangle_\Omega -cv)v  \quad \mbox{in} \quad (0,T), \qquad v (0) = \langle v^0 \rangle_\Omega 
   \end{split}
\end{align}
where $u$ is endowed with a zero flux boundary condition if $D>0$.\\
Let us first study dynamics of the shadow problem. If we integrate the mild solution $u$ of system \eqref{shadPP} over $\Omega$, we obtain an ODE system for the masses $(\langle u \rangle_\Omega, v)$. This system admits the global attractor $(\overline{u}, \overline{v})$ where
\[
\overline{u} =  \frac{dp}{cp + ab}  \qquad \text{and} \qquad  b\overline{v} = p \overline{u}.
\]
Convergence to the equilibrium is a consequence of the radially unbounded Lyapunov functional
\[
L(\langle u \rangle_\Omega, v) = \frac{a}{2}(\langle u  \rangle_\Omega - \overline{u})^2 + b (v - \overline{v} - \overline{v} \log(v/\overline{v}))
\]
adapted from \cite{Mott}, where $L$ is dissipative on trajectories, i.e.,
\[
\frac{\mathrm{d}L}{\mathrm{d}t} = - ap (\langle u \rangle_\Omega - \overline{u})^2 - bc (v-\overline{v})^2 \le 0.
\]
Thus, we obtain the asymptotic behavior $(\langle u\rangle_\Omega, v) \to (\overline{u}, \overline{v})$ as $t \to \infty$ as well as $u \to \overline{u}$, since uniformly in space
\[
u(\cdot,t)-\langle u \rangle_\Omega(t) = (S_\Delta(Dt)u^0 - \langle u^0 \rangle_\Omega) \mathrm{e}^{-pt} \to 0 \qquad \text{for} \quad t \to \infty. 
\]
Hence, Assumptions \ref{AssumptionN}--\ref{AssumptionB} is satisfied. For application of Theorem \ref{TheoremToy}, it remains to compute the Jacobian 
\[
\mathbf{J}(x,t) = \begin{pmatrix}
-p  & b \\
-a v(t)  & d - 2cv(t) - a u(x,t) 
\end{pmatrix}
\] 
at the shadow limit $(u,v)$. The shadow evolution system $\mathcal{W}$ defined in \eqref{evolsystemnonlinear} is induced by the operators
\begin{align*}
&\mathbf{D}_0\Delta + \mathbf{L}(t): L^p(\Omega) \times \mathbb{R}  \to L^p(\Omega) \times \mathbb{R}, \\
& \mathbf{L}(t)\begin{pmatrix}
\xi_1\\
\xi_2
\end{pmatrix}(x)  = \begin{pmatrix}
-p \xi_1(x) + b \xi_2 \\
 - av(t) \langle \xi_1 \rangle_\Omega + (d - 2cv(t) - a \langle u(\cdot,t) \rangle_\Omega) \xi_2 \\
\end{pmatrix}, \quad x \in \Omega, t \in \mathbb{R}_{\ge 0},
\end{align*}
where $\mathbf{D}_0= \mathrm{diag}(D, 0) \in \mathbb{R}_{\ge 0}^{2 \times 2}$ is a diagonal matrix.

\begin{lemma}
Let $(u,v)$ be a shadow solution of system \eqref{shadPP} for bounded initial conditions $u^0,v^0 \ge 0$ satisfying $\langle v^0 \rangle_\Omega>0$. Then Assumptions \ref{AssumptionL1p}--\ref{AssumptionL0} is satisfied for $p=\infty$. Moreover, the corresponding evolution system $\mathcal{U}$ and $\mathcal{W}$ is uniformly exponentially stable for the exponent $\eta=p>0$ and some $\sigma>0$, respectively. 
\end{lemma}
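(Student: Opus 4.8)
The plan is to exploit the block structure of the predator--prey linearization: projecting the $\xi_1$-equation onto spatial averages decouples it into a two-dimensional nonautonomous ODE for $(\langle\xi_1\rangle_\Omega,\xi_2)$ and a scalar zero-mean reaction--diffusion equation for the fluctuation $\tilde\xi_1=\xi_1-\langle\xi_1\rangle_\Omega$. First I would record that, by the Lyapunov analysis already carried out for the masses, $(\langle u\rangle_\Omega(t),v(t))\to(\overline u,\overline v)$ as $t\to\infty$, so the time-dependent coefficient matrix of the averaged system,
\[
\mathbf{M}(t)=\begin{pmatrix} -p & b\\ -av(t) & d-2cv(t)-a\langle u\rangle_\Omega(t)\end{pmatrix},
\]
converges to the constant matrix $\mathbf{M}_\infty=\left(\begin{smallmatrix}-p&b\\-a\overline v&-c\overline v\end{smallmatrix}\right)$, where I used the steady-state relation $d-a\overline u-c\overline v=0$ (valid since $\overline v>0$) to simplify the $(2,2)$-entry to $-c\overline v$. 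A direct computation gives $\operatorname{tr}\mathbf{M}_\infty=-p-c\overline v<0$ and $\det\mathbf{M}_\infty=\overline v(cp+ab)>0$, so $\mathbf{M}_\infty$ is Hurwitz.

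The fluctuation is straightforward: subtracting the averaged equation from the $\xi_1$-equation yields the decoupled, zero-mean problem $\partial_t\tilde\xi_1=D\Delta\tilde\xi_1-p\tilde\xi_1$ with Neumann conditions, so $\tilde\xi_1(\cdot,t)=\mathrm{e}^{-p(t-s)}S_\Delta(D(t-s))\tilde\xi_1(\cdot,s)$. Using $L^\infty$-contractivity of the Neumann heat semigroup (and Lemma \ref{Winterlemma} to gain the extra factor $\mathrm{e}^{-\lambda_1 D(t-s)}$ when $D>0$, since the zero mean is preserved), one obtains $\|\tilde\xi_1(\cdot,t)\|_{L^\infty(\Omega)}\le \overline C\,\mathrm{e}^{-p(t-s)}\|\tilde\xi_1(\cdot,s)\|_{L^\infty(\Omega)}$, a uniform exponential decay at rate at least $p$.

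The core is uniform exponential stability of the averaged system $\dot\eta=\mathbf{M}(t)\eta$. I would solve the Lyapunov equation $\mathbf{M}_\infty^{T}\mathbf{P}+\mathbf{P}\mathbf{M}_\infty=-\mathbf{I}$ for symmetric positive definite $\mathbf{P}$ (possible as $\mathbf{M}_\infty$ is Hurwitz) and set $V(\eta)=\eta^{T}\mathbf{P}\eta$. Then $\dot V=\eta^{T}(-\mathbf{I}+E(t))\eta$ with $\|E(t)\|\le 2\|\mathbf{P}\|\,\|\mathbf{M}(t)-\mathbf{M}_\infty\|\to0$; fixing $T_0$ with $\|E(t)\|\le\tfrac12$ for $t\ge T_0$ yields $\dot V\le-\tfrac{1}{2\|\mathbf{P}\|}V$, hence exponential decay of $V$ beyond $T_0$. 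To upgrade this to the uniform bound $\|\Phi(t,s)\|\le C\mathrm{e}^{-\sigma_0(t-s)}$ for \emph{all} $0\le s\le t$, I would treat the finite window $[0,T_0]$ separately, bounding the propagator there by $\mathrm{e}^{MT_0}$ with $M=\sup_t\|\mathbf{M}(t)\|<\infty$ (finite by Assumption \ref{AssumptionB}) and composing evolution operators. This is the main obstacle: mere convergence of the coefficients to a stable matrix does not by itself give uniform-in-$s$ exponential stability, and the perturbed-Lyapunov estimate together with the splitting at $T_0$ is exactly what makes the bound uniform over all initial times.

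Finally I would assemble the pieces. Since $|\langle\xi_1(\cdot,s)\rangle_\Omega|\le\|\xi_1(\cdot,s)\|_{L^\infty(\Omega)}$ and $\|\tilde\xi_1(\cdot,s)\|_{L^\infty(\Omega)}\le2\|\xi_1(\cdot,s)\|_{L^\infty(\Omega)}$, both contributions are controlled by the initial $L^\infty(\Omega)\times\mathbb{R}$ norm, and adding the two exponential estimates gives $\|\mathbf{W}(t,s)\bm\xi^0\|_{L^\infty(\Omega)\times\mathbb{R}}\le C\mathrm{e}^{-\sigma(t-s)}\|\bm\xi^0\|_{L^\infty(\Omega)\times\mathbb{R}}$ with $\sigma=\min\{p,\sigma_0\}>0$, which is Assumption \ref{AssumptionL1p} for $p=\infty$. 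For the subsystem $\mathcal{U}$ I observe that when $D=0$ the only non-diffusing component is $u$ and $\mathbf{A}_0(x,t)\equiv-p$ is constant, so $\mathcal{U}$ acts as $\mathrm{e}^{-p(t-s)}$ and Assumption \ref{AssumptionL0} holds with $\mu=p>0$; when $D>0$ there are no non-diffusing components and Assumption \ref{AssumptionL0} is vacuous. Combined with Assumptions \ref{AssumptionN}--\ref{AssumptionB}, already verified for this model, this proves the lemma and the applicability of Theorem \ref{TheoremToy}.
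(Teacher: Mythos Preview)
Your proof is correct and reaches the same conclusion as the paper, but the execution differs in a useful way. Both proofs hinge on the facts that (i) the fluctuation $\tilde\xi_1=\xi_1-\langle\xi_1\rangle_\Omega$ decouples as $\partial_t\tilde\xi_1=D\Delta\tilde\xi_1-p\tilde\xi_1$ and decays like $\mathrm{e}^{-p(t-s)}$, and (ii) the limit matrix $\mathbf{M}_\infty=\mathbf{J}_\infty$ is Hurwitz by the trace/determinant check, so that robustness of exponential stability under perturbations $\mathbf{M}(t)-\mathbf{M}_\infty\to0$ transfers stability to the full time-dependent system. The paper, however, organizes step (ii) at the \emph{operator} level: it writes $\mathbf{L}(t)=\mathbf{L}_\infty+\mathbf{B}(t)$, compares $\mathcal{W}$ with the semigroup $\mathcal{W}_\infty$ generated by $\mathbf{D}_0\Delta+\mathbf{L}_\infty$ via the variation-of-constants formula, applies Gronwall, and then invokes Bohl-exponent robustness results (Dalecki\u{\i}--Kre\u{\i}n, Schnaubelt, Datko). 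Stability of $\mathcal{W}_\infty$ itself is obtained by spectral means (Proposition~\ref{HardtLemma}) for $D=0$ and by the mean/fluctuation split for $D>0$. Your route is more elementary and more unified: you perform the mean/fluctuation split first (valid for both $D=0$ and $D>0$), reduce the perturbation problem to a genuinely two-dimensional ODE, and handle it with the Lyapunov equation $\mathbf{M}_\infty^T\mathbf{P}+\mathbf{P}\mathbf{M}_\infty=-\mathbf{I}$ together with an explicit splitting at $T_0$ for uniformity in $s$. This avoids all external references and treats both diffusion cases at once; the paper's operator-level argument, by contrast, would generalize more readily to situations where the $\xi_1$-equation has space-dependent coefficients and the decoupling you exploit is unavailable. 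Your treatment of Assumption~\ref{AssumptionL0} (the case split $D=0$ versus $D>0$, with the latter vacuous) is also cleaner than the paper's somewhat compressed statement.
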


\begin{proof}
Assumption \ref{AssumptionL0} is satisfied since $(U(t))_{t \in \mathbb{R}_{\ge 0}}$ with $U(t)=S_\Delta(Dt)\mathrm{e}^{-pt}$ is uniformly exponentially stable with exponent $\eta=p>0$. This is a consequence of contractivity of the heat semigroup $(S_\Delta(t))_{t \in \mathbb{R}_{\ge 0}}$, see Proposition \ref{heathom}. Concerning the evolution system $\mathcal{W}$ in Assumption \ref{AssumptionL1p}, let us split the shadow operator as $\mathbf{L}(t) = \mathbf{L}_\infty + \mathbf{B}(t)$ for operator matrices
\begin{align*}
\mathbf{L}_\infty, \mathbf{B}(t): & \,  L^p(\Omega) \times \mathbb{R}  \to L^p(\Omega) \times \mathbb{R}, \\
\mathbf{L}_\infty\begin{pmatrix}
\xi_1\\
\xi_2
\end{pmatrix}(x) & = \begin{pmatrix}
-p \xi_1(x) + b \xi_2 \\
- a \overline{v} \langle \xi_1 \rangle_\Omega  -c\overline{v} \xi_2 \\
\end{pmatrix} =:  \begin{pmatrix} A_{\ast} \xi_1(x) + B_\ast \xi_2\\
C_\ast \langle  \xi_1 \rangle_\Omega + D_\ast \xi_2
   \end{pmatrix},\\
\mathbf{B}(t)\begin{pmatrix}
\xi_1\\
\xi_2
\end{pmatrix}(x) &= \begin{pmatrix}
0\\
 - a(v(t)-\overline{v}) \langle \xi_1 \rangle_\Omega + \left[- 2c(v(t)-\overline{v}) - a (\langle u(\cdot,t) \rangle_\Omega- \overline{u}) \right] \xi_2\\
\end{pmatrix}.
\end{align*}
Since $\lim_{t \to \infty} \mathbf{B}(t) = \mathbf{0}$ with respect to the operator norm on $L^\infty(\Omega) \times \mathbb{R}$, evolution systems induced by $\mathbf{D}_0\Delta + \mathbf{L}(t)$ and $\mathbf{D}_0\Delta + \mathbf{L}_\infty$ are asymptotically comparable. We will show that it remains to consider the latter semigroup for exponential stability of the former evolution system. To recognize this, we start from the definition of $\mathcal{W}$ in condition \ref{AssumptionL1p}. This evolution system is given by evolution operators $\mathbf{W}(t,s)$ for $t,s \in \mathbb{R}_{\ge 0}, s \le t,$ defined by equation \eqref{evolsystemnonlinear}, where $\bm{\xi} \in C(\mathbb{R}_{\ge 0}; L^p(\Omega) \times \mathbb{R})$ is the unique solution of the shadow problem 
\begin{align*}
\frac{\partial \bm{\xi}}{\partial t} -(\mathbf{D}_0\Delta+ \mathbf{L}_\infty) \bm{\xi} =  \mathbf{B}(t)\bm{\xi}  \quad \text{in} \quad \Omega \times \mathbb{R}_{>0}
\end{align*}
endowed with zero flux boundary conditions if necessary.
We split the full operator into a time-independent, possibly unbounded part $\mathbf{D}_0\Delta + \mathbf{L}_\infty$ and the bounded time-varying operator family $(\mathbf{B}(t))_{t \in \mathbb{R}_{\ge 0}}$. We are able to compare both evolution systems, the system $\mathcal{W}_\infty$ induced by a semigroup $(\mathbf{W}_\infty(t))_{t \in \mathbb{R}_{\ge 0}}$ which is generated by the operator $\mathbf{D}_0\Delta + \mathbf{L}_\infty$ and the full evolution system $\mathcal{W}$, using the integral representation
\[
\mathbf{W}(t,s) \bm{\xi}^0 = \mathbf{W}_\infty(t-s) \bm{\xi}^0 + \int_s^t \mathbf{W}_\infty(t-\tau) \mathbf{B}(\tau) \mathbf{W}(\tau, s) \bm{\xi}^0 \; \mathrm{d}\tau \qquad \forall \; 0 \le s \le t
\] 
from \cite[Ch. VI, Theorem 9.19]{Engel}. Once we have shown uniform exponential stability for $(\mathbf{W}_\infty(t))_{t \in \mathbb{R}_{\ge 0}}$, estimations in $L^p(\Omega) \times \mathbb{R}$ for $1 \le p \le \infty$ yield
\begin{align*}
\|\mathbf{W}(t,s) \bm{\xi}^0\|_{L^p(\Omega) \times \mathbb{R}} & \le  C \mathrm{e}^{-\sigma_\infty(t-s)} \|\bm{\xi}^0\|_{L^p(\Omega) \times \mathbb{R}}  + \int_s^t C \mathrm{e}^{-\sigma_\infty(t-\tau)} \|\mathbf{B}(\tau) \|_{L^\infty(\Omega) \times \mathbb{R}} \|\mathbf{W}(\tau,s) \bm{\xi}^0\|_{L^p(\Omega) \times \mathbb{R}}  \; \mathrm{d}\tau.
\end{align*}
Gronwall's inequality results in the estimate
\[
\|\mathbf{W}(t,s) \bm{\xi}^0\|_{L^p(\Omega) \times \mathbb{R}} \le  C \mathrm{e}^{-\sigma_\infty(t-s)} \exp \left( \int_s^t C\|\mathbf{B}(\tau) \|_{L^\infty(\Omega) \times \mathbb{R}} \; \mathrm{d}\tau \right) \|\bm{\xi}^0\|_{L^p(\Omega) \times \mathbb{R}}.
\] 
Although the theory of Bohl exponents was established for bounded operators in \cite[Ch. III, pp. 118]{Daleckii}, the same estimates used to prove \cite[Corollary 4.2]{Daleckii} apply to the above estimate in the context of semigroup theory, see further \cite[Corollary 4.2]{Schnaubelt}, \cite[Theorem 5]{Datko}. %or \cite[\S 7.1, p. 195]{Henry}.
More precisely, since $\lim_{t \to \infty} \mathbf{B}(t) = \mathbf{0}$, for each $\gamma \in (0,1)$ there is a $t_0>0$ such that $C\|\mathbf{B}(t)\|_{L^\infty(\Omega) \times \mathbb{R}} \le \gamma \sigma_\infty$ for all $t \ge t_0$. This implies the estimate
\[
\|\mathbf{W}(t,s) \bm{\xi}^0\|_{L^p(\Omega) \times \mathbb{R}} \le  \tilde{C} \mathrm{e}^{-\gamma \sigma_\infty(t-s)}  \|\bm{\xi}^0\|_{L^p(\Omega) \times \mathbb{R}}
\]
for $\tilde{C} =C \exp(\int_0^{t_0} C \|\mathbf{B}(\tau)\|_{L^\infty(\Omega) \times \mathbb{R}} \; \mathrm{d}\tau)$. Hence, uniform exponential stability carries over from the evolution system $\mathcal{W}_\infty$ to the full evolution system $\mathcal{W}$ on $L^p(\Omega) \times \mathbb{R}$, provided $\lim_{t \to \infty} \mathbf{B}(t) = \mathbf{0}$.  \\
Using the spectral mapping theorem \cite[Ch. IV, Corollary 3.12]{Engel} for analytical semigroups, it is well-known that exponential stability of the semigroup $(\mathbf{W}_\infty(t))_{t \in \mathbb{R}_{\ge 0}}$ can be verified via the spectrum of its generator $\mathbf{D}_0\Delta + \mathbf{L}_\infty$. It remains to show $s(\mathbf{D}_0\Delta + \mathbf{L}_\infty) <0$ for uniform exponential stability of the evolution system $\mathcal{W}_\infty$ resp. $\mathcal{W}$ \cite[Ch. V, Theorem 1.10]{Engel}. We infer from Proposition \ref{HardtLemma} that in case of $D=0$
\begin{align*}
 \sigma(\mathbf{L}_\infty)  = \{ A_{\ast}\} \cup \Sigma,
\end{align*}
where $\Sigma$ consists of all eigenvalues of the constant coefficient matrix 
\[
\mathbf{J}_\infty = \begin{pmatrix}
A_{\ast} & B_\ast\\
C_\ast & D_\ast
\end{pmatrix}.
\]
Note that $A_{\ast} =-p<0$ and both eigenvalues of $\mathbf{J}_\infty$ have negative real parts since $\mathrm{tr}(\mathbf{J}_\infty) =-p-c \overline{v}<0$ and $\det(\mathbf{J}_\infty) = (pc+ab)\overline{v}>0$. \\
From the reasoning in Lemma \ref{Sigma} we know that $\sigma(\mathbf{D}_0\Delta + \mathbf{L}_\infty)$ is a discrete set for $D>0$. %and one could follow the strategy of \cite[Appendix]{Miyamoto}. However, since the evolution of the linear shadow limit is quite simple, we apply a different approach. 
The semigroup $(\mathbf{W}_\infty(t))_{t \in \mathbb{R}_{\ge 0}}$ is defined by the solution $\bm{\xi}=(\xi_1,\xi_2)$ of the shadow problem $\partial_t \bm{\xi} - \mathbf{D}_0\Delta \bm{\xi} = \mathbf{L}_\infty \bm{\xi}$. While $\xi_1(\cdot,t)-\langle \xi_1 \rangle_\Omega(t) = (S_\Delta(Dt)\xi_1^0 - \langle \xi_1^0 \rangle_\Omega) \mathrm{e}^{-pt}$, integration yields
\[
\langle \bm{\xi} \rangle_\Omega = \begin{pmatrix}
\langle \xi_1 \rangle_\Omega\\
\xi_2
\end{pmatrix}(t) = \exp(\mathbf{J}_\infty t) \begin{pmatrix}
\langle \xi_1^0 \rangle_\Omega\\
 \xi_2^0 
\end{pmatrix}.
\]
It is well-known from the theory of ODEs that $\langle \bm{\xi} \rangle_\Omega$ decays exponentially to zero since $\mathbf{J}_\infty$ is a stable matrix \cite[Ch. I, Theorem 4.1]{Daleckii}. Choosing $\sigma \in \mathbb{R}_{>0}$ such that $\sigma< \min \{p, \min_{\lambda \in \sigma(\mathbf{J}_\infty)} |\mathrm{Re} \, \lambda|\}$ yields an estimation of both expressions, $\langle \bm{\xi} \rangle_\Omega$ and $\bm{\xi} - \langle \bm{\xi} \rangle_\Omega$. This results in
\[
\|\mathbf{W}_\infty(t) \bm{\xi}^0\|_{L^\infty(\Omega) \times \mathbb{R}} = \|\bm{\xi}(\cdot,t) \|_{L^\infty(\Omega) \times \mathbb{R}} \le C_\sigma \mathrm{e}^{-\sigma t} \|\bm{\xi}^0\|_{L^\infty(\Omega) \times \mathbb{R}} 
\]
for some constant $C_\sigma>0$. Thus, Assumption \ref{AssumptionL1p} is satisfied for $p=\infty, \sigma>0$ and each $D \ge 0$.
\end{proof}

In summary, Theorem \ref{TheoremToy} yields global estimates
 \begin{align*}
\|u_\varepsilon - u\|_{L^\infty(\Omega \times \mathbb{R}_{\ge 0})} +  \|v_\varepsilon-v-\psi_\varepsilon\|_{L^\infty(\Omega \times \mathbb{R}_{\ge 0})}   \le  C\varepsilon. 
 \end{align*}
Note that the results on Lyapunov functions in \cite[Proposition 2.1]{Hattaf} is also applicable to partly diffusing systems. The same Lyapunov function which is known from the theory of ODEs can be extended to the reaction-diffusion case. Consequently, $(\overline{u}, \overline{v})$ is the only positive attractor for the diffusing system \eqref{PP} and $(\overline{u}, \overline{v})$ is globally (for positive initial data) asymptotically stable by Lyapunov's direct method.

\section*{Acknowledgments}

{This work was supported by the Deutsche Forschungsgemeinschaft (DFG, German Research Foundation) under Germany's Excellence Strategy EXC 2181/1 - 390900948 (the Heidelberg STRUCTURES Excellence Cluster and SFB1324 (B05) and the Klaus Tschira Foundation, Germany (00.277.2015).}

\begin{appendices}

%\addappheadtotoc % \appendixtocname to TOC
%\addtocontents{toc}{\protect\setcounter{tocdepth}{-1}} % Einzelne Chapter nicht ins Inhaltsverzeichnis

\section{Parabolic theory} \label{AppendixA} 

The preceding results are based on properties of the heat semigroup $(S_\Delta(t))_{t \in \mathbb{R}_{\ge 0}}$ defined in \cite{Davies, Ouhabaz}. We study basic properties of this semigroup on the spaces $L^p(\Omega)$ for $1 \le p < \infty$. Additionally, $L^\infty(\Omega_T)$ estimates are derived for solutions of the inhomogeneous heat equation with explicit dependence on time $T$.

\begin{proposition} \label{heathom}
Let $\Omega \subset \mathbb{R}^n$ be a bounded domain with $\partial \Omega \in C^{0,1}$, $z^0 \in L^2(\Omega)$. Then the homogeneous heat equation 
  \begin{align*}
  \frac{\partial z}{\partial t} - \Delta z &= 0 \quad \mbox{in} \quad   \Omega_T,  \qquad z(\cdot,0) = z^0   \quad \mbox{in} \quad   \Omega, 
 \qquad    \frac{\partial z}{\partial \nu} =0 \quad \mbox{on} \quad  \partial \Omega \times (0,T)
  \end{align*}
has a unique variational solution $z \in C(\mathbb{R}_{\ge 0}; L^2 (\Omega ))$ satisfying $z \in L^2(0,T; H^1 (\Omega ))$ for each $T>0$. The solution is given by the Fourier expansion
  \begin{align}
  z(x,t) &= (S_\Delta(t) z^0)(x) =  \sum_{j \in \mathbb{N}_0} \mathrm{e}^{-\lambda_j t} (z^0 , w_j )_{L^2 (\Omega )}  w_j (x), \qquad x \in \Omega, t \in \mathbb{R}_{\ge 0},  \label{heatsemigroup}
 \end{align}
where $(\lambda_j, w_j)_{j \in \mathbb{N}_0}$ is a spectral basis of $-\Delta$ on $L^2(\Omega)$ solving problem \eqref{spect}.\\
Moreover, the heat semigroup $(S_\Delta(t))_{t \in \mathbb{R}_{\ge 0}}$ defined by \eqref{heatsemigroup} can be extended to a contraction semigroup on $L^p(\Omega)$ for each $1 \le p \le \infty$, which is strongly continuous for $1 \le p < \infty$ and analytic for $1 < p < \infty$.
\end{proposition}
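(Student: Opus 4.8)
The plan is to realize $S_\Delta$ as the semigroup generated by the Neumann Laplacian through its associated symmetric Dirichlet form, and then to propagate the relevant properties along the $L^p$ scale by submarkovian-semigroup theory. First I would introduce the bilinear form
\[
a(u,w) = \int_\Omega \nabla u \cdot \nabla w \; \mathrm{d}x, \qquad D(a) = H^1(\Omega),
\]
which is densely defined, symmetric, non-negative and closed; closedness is immediate from completeness of $H^1(\Omega)$ together with the coercivity (G\aa rding) identity $a(u,u) + \|u\|_{L^2(\Omega)}^2 = \|u\|_{H^1(\Omega)}^2$. By the first representation theorem for forms, $a$ is associated with a unique self-adjoint non-negative operator $-\Delta_N$ on $L^2(\Omega)$ with form domain $H^1(\Omega)$; this is the Neumann Laplacian, and it generates a symmetric analytic contraction semigroup $S_\Delta(t) = \mathrm{e}^{t\Delta_N}$ on $L^2(\Omega)$.

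Next I would extract the spectral basis and the Fourier expansion. Since $\Omega$ is bounded with $\partial\Omega \in C^{0,1}$, the embedding $H^1(\Omega) \hookrightarrow L^2(\Omega)$ is compact by the Rellich--Kondrachov theorem for Lipschitz domains, so $-\Delta_N$ has compact resolvent and hence a discrete spectrum $0 = \lambda_0 < \lambda_1 \le \lambda_2 \le \cdots \to \infty$ with an $L^2$-orthonormal basis of eigenfunctions $(w_j)_{j \in \mathbb{N}_0}$ solving \eqref{spect}. The spectral theorem then yields representation \eqref{heatsemigroup}, while testing the evolution equation with $z$ produces the energy identity delivering $z \in C(\mathbb{R}_{\ge 0}; L^2(\Omega)) \cap L^2(0,T; H^1(\Omega))$ for every $T>0$, with uniqueness following from the same estimate.

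The genuinely delicate part is the $L^p$ extension. The key point is that $a$ is a Dirichlet form: normal contractions operate on $H^1(\Omega)$ and decrease $a$, so in particular $|u| \in H^1(\Omega)$ with $a(|u|,|u|) \le a(u,u)$ and $0 \vee u \wedge 1 \in H^1(\Omega)$ with $a(0 \vee u \wedge 1) \le a(u,u)$; these are the two Beurling--Deny criteria, classical for the gradient form. By the theory of submarkovian semigroups as developed in \cite{Davies, Ouhabaz}, this forces $S_\Delta$ to be positivity-preserving and $L^\infty$-contractive, hence (by self-adjointness and duality) $L^1$-contractive, and Riesz--Thorin interpolation then extends it to a contraction semigroup on every $L^p(\Omega)$, $1 \le p \le \infty$. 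Strong continuity for $1 \le p < \infty$ follows from strong continuity on $L^2(\Omega)$, the uniform contraction bound, and density of $L^2(\Omega) \cap L^p(\Omega)$ in $L^p(\Omega)$.

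I expect the analyticity on $L^p(\Omega)$ for $1 < p < \infty$ to be the main obstacle. I would obtain it by Stein's complex interpolation applied to the analytic family $z \mapsto S_\Delta(\mathrm{e}^{\mathrm{i}\theta}t)$: analyticity on $L^2(\Omega)$ with opening angle $\pi/2$ interpolates against the uniform bounds on $L^1(\Omega)$ and $L^\infty(\Omega)$ to produce a sector of analyticity on $L^p(\Omega)$ that shrinks as $p \to 1$ or $p \to \infty$ but stays nontrivial for every $1 < p < \infty$. Rather than reconstruct the interpolation estimates I would invoke the corresponding statement for symmetric submarkovian semigroups in \cite{Davies, Ouhabaz}. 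The breakdown of strong continuity, and a fortiori of analyticity, at the endpoint $p = \infty$ is precisely the phenomenon recorded in the discussion preceding Definition \ref{mildsol}, so that no claim is made there.
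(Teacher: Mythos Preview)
Your proposal is correct and follows essentially the same route as the paper: both arguments rest on the submarkovian (Dirichlet-form) semigroup theory in \cite{Davies, Ouhabaz} to obtain the $L^p$ contraction, strong continuity and analyticity, and both use the compact resolvent of the Neumann Laplacian on a Lipschitz domain for the spectral expansion. The only stylistic difference is that you unpack the form-theoretic machinery (Beurling--Deny criteria, Riesz--Thorin and Stein interpolation) explicitly, whereas the paper simply cites the relevant theorem numbers in \cite{Davies} and verifies the $L^2(0,T;H^1(\Omega))$ regularity by showing the partial sums of \eqref{heatsemigroup} form a Cauchy sequence rather than via the energy identity.
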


\begin{proof}
Existence and contractivity of the heat semigroup $(S_\Delta(t))_{t \in \mathbb{R}_{\ge 0}}$ is shown in \cite[Theorem 1.3.9]{Davies}. By \cite[Theorems 1.4.1, 1.4.2]{Davies}, $(S_\Delta(t))_{t \in \mathbb{R}_{\ge 0}}$ is a strongly continuous semigroup for each $1 \le p < \infty$, which even can be extended analytically to some sector in the complex plane for $p>1$. The solution $z(t) = S_\Delta(t) z^0$ for $z^0 \in L^2(\Omega)$ satisfies $z \in C(\mathbb{R}_{\ge 0}; L^2(\Omega))$ due to strong continuity of the semigroup. Although the solution might lose its differentiability at $t=0$, we obtain higher regularity for $t>0$ by analyticity of the semigroup. This yields $z \in C(\mathbb{R}_{>0}; \mathcal{D}(\Delta^\ell))$ for all integers $k, \ell \in \mathbb{N}_0$, where $\mathcal{D}(\Delta^0) := L^2(\Omega)$ and $\mathcal{D}(\Delta)$ is the domain of the generator of the heat semigroup defined on $L^2(\Omega)$ \cite[Theorem 7.7]{Brezis}. Hence, the solution $z(\cdot,t)$ lies in $H^1(\Omega)$ for each $t>0$ and the boundary condition is satisfied in the sense of distributions by the trace operator $H^1(\Omega) \hookrightarrow W^{-1/2,2}(\partial \Omega)$ from \cite[Theorem 1.5.1.2]{Grisvard}.\\
To determine the Fourier coefficients, we recall that the unique solution $z$ solves the variational equation
\[
\frac{\mathrm{d}}{\mathrm{d}t} \int_\Omega z(x,t) \varphi(x) \; \mathrm{d}x + \int_{\Omega} \nabla z(x,t) \nabla \varphi(x) \; \mathrm{d}x =0 \qquad \forall \; \varphi \in H^1(\Omega), t \in \mathbb{R}_{>0}.
\]
Since $z(\cdot,t) \in L^2(\Omega)$, we can expand this function in a Fourier series using a spectral basis of $-\Delta$ from equation \eqref{spect}. This leads to the series representation \eqref{heatsemigroup}.\\
Concerning the regularity $z \in L^2(0,T; H^1(\Omega))$, we finally note that the partial sums 
%\[
%z_m(\cdot, t) = \sum_{j =0}^m \mathrm{e}^{-\lambda_j t} (z^0 , w_j )_{L^2 (\Omega )}  w_j
%\]
of the Fourier series form a Cauchy sequence in this space.% Indeed, for $\ell >m \ge 0$ 
%\begin{align*}
%\|z_m- z_\ell \|_{L^2(0,T;H^1(\Omega))}^2 & = \sum_{j=m + 1}^\ell   \int_0^T (1+\lambda_j) \mathrm{e}^{-2\lambda_j t} \; \mathrm{d}t\; |(z^0 , w_j )_{L^2 (\Omega )}|^2  \le C \sum_{j=m + 1}^\ell    |(z^0 , w_j )_{L^2 (\Omega )}|^2
%\end{align*}
%tends to $0$ as $m,\ell \to \infty$, due to Parseval's equality for $z^0 \in L^2(\Omega)$ and $C=1/2 + 1/(2\lambda_1)$.
\end{proof}

%Remind that the above proposition may be shown for different parabolic differential operators as well as other boundary conditions \cite[Theorem 2.4]{Daners}. \\
As used above, Galerkin's approximation is based on a spectral basis $(\lambda_j,w_j)_{j \in \mathbb{N}_0}$ of $-\Delta$ satisfying
  \begin{equation}\label{spect}
    -\Delta w_j=\lambda_j w_j \quad \mbox{in} \quad \Omega, \qquad \frac{\partial w_j}{\partial \nu} =0 \quad \mbox{on} \quad  \partial \Omega
  \end{equation}
in the weak sense \cite[Theorem 1.2.8]{Henrot}. The principal eigenvalue $\lambda _0 =0$ has eigenfunction $w_0 =|\Omega |^{-1/2}$. The other eigenvalues are strictly positive and tend to infinity as $j \to \infty$. The eigenfunctions $(w_j )_{j \in \mathbb{N}_0} \subset H^1(\Omega)$ form an orthonormal basis for $L^2 (\Omega )$ and an orthogonal basis for $H^1 (\Omega)$.\\ 
The first positive eigenvalue $\lambda_1>0$ of $-\Delta$ considered as an operator on $L^2(\Omega)$ is also fundamental for the decay estimate of the heat semigroup which is used crucially throughout this work. 

\begin{lemma} \label{Winterlemma}
Let $\Omega \subset \mathbb{R}^n$ be a bounded domain with $\partial \Omega \in C^{0,1}$ and $\lambda_1>0$ the first non-zero eigenvalue of $-\Delta$ endowed with zero Neumann boundary conditions. Then there exists a constant $C>0$, merely depending on $\Omega$, such that for all $1 \le q \le p \le \infty$
\begin{equation}
\|S_\Delta(\tau) z\|_{L^{p}(\Omega)} \le C m(\tau)^{-\frac{n}{2}\left( \frac{1}{q}- \frac{1}{p} \right)} \mathrm{e}^{-\lambda_1 \tau} \| z\|_{L^{q}(\Omega)} \qquad \forall \; \tau \in \mathbb{R}_{> 0} \label{winlemmaapp}
\end{equation}
holds for all $z \in L^q(\Omega)$ satisfying $\langle z \rangle_\Omega = 0$. Here, we denote $m(\tau) = \min\{1,\tau\}$. Especially for $p=q$, there exists a constant $\overline{C}>0$ independent of $\tau$ and the initial conditions $z$ such that
\begin{equation*}
\| S_\Delta(\tau) z \|_{L^{p}(\Omega)} \le \overline{C} \mathrm{e}^{-\lambda_1 \tau} \| z \|_{L^p(\Omega)} \qquad \forall \; \tau \in \mathbb{R}_{\ge 0}. 
\end{equation*}
\end{lemma}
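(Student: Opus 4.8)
The plan is to combine two independent facts about the Neumann heat semigroup $(S_\Delta(\tau))_{\tau\ge0}$: a short-time \emph{ultracontractive smoothing} estimate and a large-time $L^2$ \emph{exponential decay} estimate that is available precisely on functions of vanishing mean. Accordingly, I would split the time axis at $\tau=1$, noting that $m(\tau)=\tau$ on $(0,1]$ and $m(\tau)=1$ on $(1,\infty)$, and treat the two regimes separately.

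First I would record the short-time smoothing bound. From the Gaussian upper bound for the Neumann heat kernel on a bounded Lipschitz domain (see \cite{Davies, Ouhabaz}) one obtains the ultracontractivity estimate $\|S_\Delta(\tau)\|_{L^1(\Omega)\to L^\infty(\Omega)}\le C_0\,\tau^{-n/2}$ for $0<\tau\le1$. Interpolating this against the $L^1$- and $L^\infty$-contractivity from Proposition \ref{heathom} via the Riesz--Thorin theorem yields, for every $1\le q\le p\le\infty$,
\[
\|S_\Delta(\tau)\|_{L^q(\Omega)\to L^p(\Omega)}\le C\,\tau^{-\frac n2(\frac1q-\frac1p)},\qquad 0<\tau\le1,
\]
where the relevant interpolation exponent turns the constant into a power of $C_0$ bounded by $\max\{1,C_0\}$, hence uniform in $p$ and $q$. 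Since $\mathrm{e}^{-\lambda_1\tau}\in[\mathrm{e}^{-\lambda_1},1]$ on $(0,1]$, I can insert the factor $\mathrm{e}^{-\lambda_1\tau}$ into this bound at the cost of enlarging the constant, which already establishes \eqref{winlemmaapp} on $(0,1]$ with $m(\tau)=\tau$; the mean-zero hypothesis plays no role here.

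Second, for $\tau>1$ I would exploit the spectral gap. For mean-zero $z$ the Fourier expansion \eqref{heatsemigroup} drops the constant mode, since $\langle z\rangle_\Omega=0$ is equivalent to $(z,w_0)_{L^2(\Omega)}=0$; thus $\|S_\Delta(\tau)z\|_{L^2(\Omega)}^2=\sum_{j\ge1}\mathrm{e}^{-2\lambda_j\tau}|(z,w_j)_{L^2(\Omega)}|^2\le \mathrm{e}^{-2\lambda_1\tau}\|z\|_{L^2(\Omega)}^2$, and the mean is preserved along the flow because $\lambda_0=0$. I would then factorize $S_\Delta(\tau)=S_\Delta(\tfrac12)\,S_\Delta(\tau-1)\,S_\Delta(\tfrac12)$ and chain three estimates: the two outer factors $S_\Delta(\tfrac12)\colon L^q(\Omega)\to L^2(\Omega)$ and $S_\Delta(\tfrac12)\colon L^2(\Omega)\to L^p(\Omega)$ are bounded uniformly in $q,p$ (by the short-time smoothing estimate when the smoothing exponent is nonnegative, and by the embedding $L^r(\Omega)\hookrightarrow L^2(\Omega)$ on the bounded domain together with contractivity otherwise), while the middle factor contributes $\mathrm{e}^{-\lambda_1(\tau-1)}$ on mean-zero $L^2$-functions. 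This gives $\|S_\Delta(\tau)z\|_{L^p(\Omega)}\le C\,\mathrm{e}^{-\lambda_1\tau}\|z\|_{L^q(\Omega)}$ for $\tau>1$, i.e. \eqref{winlemmaapp} with $m(\tau)=1$. Taking the maximum of the two constants closes the main estimate, and the special case $p=q$ follows at once: then $\frac1q-\frac1p=0$ annihilates the $m(\tau)$-factor, while $\tau=0$ is covered trivially by setting $\overline C=\max\{C,1\}$.

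The step I expect to be the genuine obstacle is the first one: securing the ultracontractive $L^1$--$L^\infty$ kernel bound on a \emph{merely} Lipschitz domain with Neumann conditions, and verifying that the constant produced by interpolation is uniform over all admissible pairs $(q,p)$ and depends only on $\Omega$. Once that input is in hand, the spectral-gap decay and the three-factor chaining argument are routine bookkeeping.
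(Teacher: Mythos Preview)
Your proposal is correct and follows essentially the same route as the paper. The paper's proof is merely a pointer to \cite[Lemma~1.3]{Winkler}, whose argument is precisely the combination of ultracontractive smoothing (from Gaussian heat-kernel bounds) with the $L^2$ spectral gap on mean-zero data that you spell out; the single point the paper adds is that the required kernel bounds are available on Lipschitz domains by \cite[Theorem~3.2.9]{Davies}, which is exactly the ``genuine obstacle'' you identify at the end.
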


\begin{proof}
This is essentially proven in \cite[Lemma 1.3]{Winkler}. Unfortunately, \cite{Winkler} uses estimates of the heat kernel for a bounded domain with boundary $\partial \Omega \in C^{2,\alpha}$ for some $\alpha \in (0,1)$. In order to relax this condition, we use heat kernel estimates for $\partial \Omega \in C^{0,1}$ from \cite[Theorem 3.2.9]{Davies}, which imply decay estimate \eqref{winlemmaapp} by the same steps as in the proof of \cite{Winkler}.
\end{proof}

By Duhamel's formula, the following result is obtained for the inhomogeneous heat equation.

\begin{proposition} \label{heatinhom}
Let $\Omega \subset \mathbb{R}^n$ be a bounded domain with $\partial \Omega \in C^{0,1}$, $z^0 \in L^2(\Omega)$ and $R \in L^2 (\Omega_T)$ be given. Then the inhomogeneous heat equation 
  \begin{align}
  \frac{\partial z}{\partial t} - \Delta z &= R(x,t) \quad \mbox{in} \quad   \Omega_T,  \qquad z(\cdot,0) = z^0   \quad \mbox{in} \quad   \Omega, 
 \qquad    \frac{\partial z}{\partial \nu} =0 \quad \mbox{on} \quad  \partial \Omega \times (0,T) \label{heatinhom1}
  \end{align}
  has a unique mild solution $z\in L^2 (0,T; H^1 (\Omega )) \cap C([0,T]; L^2 (\Omega ))$, given by the separation of variables formula
  \begin{align}\label{Fourierinhom}
  \begin{split}
  z(\cdot,t) &=   S_\Delta(t)z^0 + \int_0^t S_\Delta(t-s) R(\cdot,s) \; \mathrm{d}s \\ 
  & = \sum_{j \in \mathbb{N}_0} \mathrm{e}^{-\lambda_j t} (z^0 , w_j )_{L^2 (\Omega )}  w_j (x) + \int^t_0 \sum_{j \in \mathbb{N}_0}   \mathrm{e}^{-\lambda_j (t-s)} ( R(\cdot \ , s), w_j )_{L^2 (\Omega )}  w_j (x)  \; \mathrm{d}s. 
  \end{split}
 \end{align}
In addition, $z^0 \in H^1(\Omega)$ implies a weak solution $z \in L^\infty(0,T; H^1(\Omega))$ with weak derivative $\partial_t z \in L^2(\Omega_T)$, and the weak formulation
\[
(\partial_t z(\cdot,t), \varphi)_{L^2(\Omega)} + (\nabla z(\cdot,t), \nabla \varphi)_{L^2(\Omega)} = (R(\cdot,t), \varphi)_{L^2(\Omega)} \qquad \forall \; \varphi \in H^1(\Omega)
\]
holds for a.e. $t \in (0,T)$.
\end{proposition}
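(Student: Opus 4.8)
The plan is to obtain existence and the representation \eqref{Fourierinhom} from the semigroup $S_\Delta$ of Proposition \ref{heathom} via Duhamel's formula, and to derive the two regularity statements from parabolic energy estimates carried out on the spectral (Galerkin) expansion, so that only the weak formulation is used and the mere Lipschitz regularity of $\partial\Omega$ never enters. First I would \emph{define} $z$ by the first line of \eqref{Fourierinhom}, i.e. $z(\cdot,t)=S_\Delta(t)z^0+\int_0^t S_\Delta(t-s)R(\cdot,s)\,\mathrm{d}s$. Since $R\in L^2(\Omega_T)\hookrightarrow L^1(0,T;L^2(\Omega))$ and $(S_\Delta(t))_{t\ge0}$ is a strongly continuous contraction semigroup on $L^2(\Omega)$, the Bochner integral converges and $z\in C([0,T];L^2(\Omega))$. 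Because $S_\Delta(t)w_j=\mathrm{e}^{-\lambda_j t}w_j$ on the orthonormal basis $(w_j)_{j\in\mathbb{N}_0}$ from \eqref{spect}, expanding $z^0$ and $R(\cdot,s)$ in this basis and interchanging the (unconditionally $L^2$-convergent) sum with the Bochner integral by Fubini yields the second line of \eqref{Fourierinhom}.

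For the regularity $z\in L^2(0,T;H^1(\Omega))$ I would pass to the coefficients $c_j(t)=(z(\cdot,t),w_j)_{L^2}$, which solve $c_j'+\lambda_j c_j=r_j$ with $r_j(s):=(R(\cdot,s),w_j)_{L^2}$ and $c_j(0)=(z^0,w_j)_{L^2}$; equivalently, I would test the equation against $z$ itself. Either route gives the energy identity $\tfrac12\tfrac{\mathrm{d}}{\mathrm{d}t}\|z\|_{L^2(\Omega)}^2+\|\nabla z\|_{L^2(\Omega)}^2=(R,z)_{L^2(\Omega)}$, whence, after a Grönwall step to bound $\|z\|_{L^2(\Omega_T)}$ by the data and Young's inequality on the right-hand side, $\int_0^T\|\nabla z\|_{L^2(\Omega)}^2\,\mathrm{d}t\le C(\|z^0\|_{L^2(\Omega)}^2+\|R\|_{L^2(\Omega_T)}^2)$. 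The zero-flux condition and the membership $z(\cdot,t)\in H^1(\Omega)$ are interpreted weakly exactly as in Proposition \ref{heathom}, since the test space $H^1(\Omega)$ encodes the Neumann condition as a natural boundary condition. Uniqueness then follows at once: a difference of two solutions solves \eqref{heatinhom1} with $z^0=0$ and $R=0$, and the energy identity (or the uniqueness part of Proposition \ref{heathom}) forces it to vanish.

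The more delicate statement is the improved regularity for $z^0\in H^1(\Omega)$, and this is where I expect the main obstacle. Here I would work on the truncated expansions $z_N(\cdot,t)=\sum_{j=0}^N c_j(t)w_j$, for which testing against $\partial_t z_N$ is legitimate because $z_N$ is a finite sum of $H^1$ functions with $L^2$-in-time coefficients, yielding the identity $\int_0^T\|\partial_t z_N\|_{L^2(\Omega)}^2\,\mathrm{d}t+\tfrac12\|\nabla z_N(\cdot,T)\|_{L^2(\Omega)}^2=\tfrac12\|\nabla z_N(\cdot,0)\|_{L^2(\Omega)}^2+\int_0^T(R,\partial_t z_N)_{L^2(\Omega)}\,\mathrm{d}t$. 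Absorbing the last term by Young's inequality gives the uniform bound $\|\partial_t z_N\|_{L^2(\Omega_T)}^2+\sup_{t\in[0,T]}\|\nabla z_N(\cdot,t)\|_{L^2(\Omega)}^2\le C(\|\nabla z^0\|_{L^2(\Omega)}^2+\|R\|_{L^2(\Omega_T)}^2)$, where $\|\nabla z_N(\cdot,0)\|_{L^2(\Omega)}\le\|\nabla z^0\|_{L^2(\Omega)}$ since the $w_j$ are $H^1$-orthogonal. The obstacle is to upgrade these $N$-uniform bounds to statements about $z$ itself: I would extract weak-$*$ limits in $L^\infty(0,T;H^1(\Omega))$ and weak limits of $\partial_t z_N$ in $L^2(\Omega_T)$, identify them with $z$ and $\partial_t z$ via the already-established convergence $z_N\to z$ in $L^2$, and then pass to the limit in the Galerkin weak form $(\partial_t z_N,w_j)_{L^2}+(\nabla z_N,\nabla w_j)_{L^2}=(R,w_j)_{L^2}$ to obtain the stated weak formulation for every $\varphi\in H^1(\Omega)$ by density of $\mathrm{span}\{w_j\}$ in $H^1(\Omega)$. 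Throughout, no elliptic $H^2$-regularity is invoked, so the argument is insensitive to the boundary being only Lipschitz, the spectral basis \eqref{spect} being the only structure required.
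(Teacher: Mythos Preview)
Your proposal is correct and follows essentially the same route as the paper's proof: existence and the Duhamel representation come from the strongly continuous semigroup $(S_\Delta(t))_{t\ge0}$ on $L^2(\Omega)$ (the paper cites \cite[Section 4.2]{Pazy} for this), while the regularity statements $z\in L^2(0,T;H^1(\Omega))$ and, for $z^0\in H^1(\Omega)$, $z\in L^\infty(0,T;H^1(\Omega))$ with $\partial_t z\in L^2(\Omega_T)$ are obtained from the spectral/Galerkin truncations and the standard energy estimates, exactly as you outline (the paper phrases the first step as verifying the Cauchy property of the partial sums and defers the second to \cite[\S 7.1.3, Theorem 5]{Evans}). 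Your explicit use of $H^1$-orthogonality of the $w_j$ to control $\|\nabla z_N(\cdot,0)\|_{L^2}$ and the weak-limit identification are precisely the details hidden behind those citations.
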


\begin{proof}
By \cite[Section 4.2]{Pazy}, there exists a unique mild solution $z \in C([0,T];L^2(\Omega))$ of problem \eqref{heatinhom1}, which is given by the integral formula \eqref{Fourierinhom}. To show that $z$ is an element of $L^2(0,T;H^1(\Omega))$ resp. $L^\infty(0,T;H^1(\Omega))$, it is again sufficient to prove the Cauchy property of the partial sums induced by expression \eqref{Fourierinhom}. %For $\ell >m \ge 0$, we obtain 
%\begin{align*}
%\|(z_m- z_\ell)(\cdot,t) \|_{H^1(\Omega)}^2 & \le \sum_{j=m + 1}^\ell   (1+\lambda_j) \left[ \mathrm{e}^{-2\lambda_j t} \; |(z^0 , w_j )_{L^2 (\Omega )}|^2 +\left( \int_0^t \mathrm{e}^{-\lambda_j (t-s)}  (R(\cdot,s) , w_j )_{L^2 (\Omega )} \; \mathrm{d}s \right)^2 \right] \\
%& \quad + \sum_{j=m + 1}^\ell   (1+\lambda_j)  \\
%& \le \sum_{j=m + 1}^\ell   (1+\lambda_j)  |(z^0 , w_j )_{L^2 (\Omega )}|^2   + (T+1) \int_0^t  \sum_{j=m + 1}^\ell   |(R(\cdot,s) , w_j )_{L^2 (\Omega )}|^2 \; \mathrm{d}s 
%\end{align*}
%by H\"older's inequality. The same reasoning as in the proof of Proposition \ref{heathom} applies to obtain a Cauchy sequence, hence, we omit details. 
Concerning the weak formulation, we use Galerkin's approximation \cite[Section 7.1]{Evans}. Following this classical approach from \cite[\S 7.1.3, Theorem 5]{Evans}, one establishes the result for more regular initial data $z^0 \in H^1(\Omega)$.
\end{proof}

Once we obtained a solution of problem \eqref{heatinhom1} for $z^0 =0$ in Proposition \ref{LinftynD}, we used an $L^\infty(\Omega_T)$ estimate for the solution of the inhomogeneous heat equation with explicit dependence on time $T$. This can be developed as in \cite[Ch. III, \S 7]{Ladyparab}, which essentially uses parabolic $L_{p,r}(\Omega_T)$ estimates in combination with the well-known truncation method of Stampacchia. For the convenience of the reader, we will use semigroup theory to prove such a result. Let us consider a bounded weak solution $z \in L^2 (0,T; H^1 (\Omega )) \cap C([0,T]; L^2(\Omega))$ which solves 
  \begin{align}
   \frac{\partial z}{\partial t} - d \Delta z &= R(x,t) \quad \mbox{in} \quad   \Omega_T,  \qquad z(\cdot,0) = 0  \quad \mbox{in} \quad   \Omega, 
 \qquad    \frac{\partial z}{\partial \nu} =0 \quad \mbox{on} \quad  \partial \Omega \times (0,T) \label{eqzd}
  \end{align}
with right-hand side $R \in L_{p,r}(\Omega_T)$ and diffusion parameter $d$. The Lebesgue space $L_{p,r}(\Omega_T)$ is defined by all measurable functions $\psi$ on $\Omega_T$ with finite mixed norm 
\begin{equation}
\|\psi\|_{p,r} := \left( \int_0^T \left( \int_\Omega |\psi(x,t)|^p \; \mathrm{d}x \right)^{r/p}  \mathrm{d}t \right)^{1/r} \qquad \text{for} \quad 1 \le p,r < \infty \label{parabLpr}
\end{equation}
and an obvious modification for $r= \infty$ \cite[Chapters I, II, \S1 in both cases]{Ladyparab}. %It is well known that $L_{p,r}(\Omega_T) = L^r(0,T;L^p(\Omega))$ for $p,r< \infty$ %since simple functions are dense in both spaces with the same norm 
%\cite[Section 1.1]{Arendt}.
If we do not specify the region of integration within the notation $\|\cdot\|_{p,r}$, we assume to integrate over $\Omega_T$. \\
The aim is to show $L^\infty(\Omega_T)$ estimates for the solution $z$ which depend explicitly on time $T$ and the mixed norm $\|R\|_{p,r}$. Within this procedure, the exponent $p$ is restricted due to Sobolev embeddings by $p> n/2$ for $n \ge 2$. Let us choose a parameter $1 \le r \le \infty$ according to \cite[Ch. III, \S 7]{Ladyparab}, i.e., we assume the relation
 \begin{align}
0 \le  \frac{1}{r} + \frac{n}{2p} <1 \label{paramLady}
\end{align} 
for given $1 \le p, r \le \infty$. Then we obtain the following result.

 \begin{proposition}\label{thees} Let $z \in L^2 (0,T; H^1 (\Omega )) \cap C([0,T]; L^2(\Omega))$ be a bounded weak solution of the initial boundary value problem \eqref{eqzd} with right-hand side $R \in L_{p,r}(\Omega_T)$ and parameter values $p,r$ satisfying relation \eqref{paramLady}. Then there exists a constant $C>0$ which only depends on $\Omega, n, p, r$ and a lower bound of the diffusion $d$ such that the diffusing component $z$ satisfies the estimate
 \begin{equation}\label{esinfty}
   \|z \|_{L^\infty (\Omega_T)} \leq C T^{1-1/r} \| R  \|_{p, r }.
 \end{equation}
 \end{proposition}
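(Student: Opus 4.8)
The plan is to represent $z$ by Duhamel's formula and then exploit the smoothing-and-decay estimate for the heat semigroup collected in Lemma \ref{Winterlemma}, with the parameter relation \eqref{paramLady} entering precisely where integrability of a singular time kernel is needed. Since $z^0=0$, the mild solution of \eqref{eqzd} (as in Proposition \ref{heatinhom}, now with diffusion coefficient $d$) is
\begin{equation*}
z(\cdot,t) = \int_0^t S_\Delta(d(t-s)) R(\cdot,s)\, \mathrm{d}s .
\end{equation*}
I would split each slice into its spatial mean and fluctuation, $R(\cdot,s) = \langle R(\cdot,s)\rangle_\Omega + \tilde R(\cdot,s)$ with $\langle \tilde R(\cdot,s)\rangle_\Omega = 0$. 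Because constants lie in the kernel of the Neumann Laplacian (the eigenfunction $w_0$), the semigroup fixes the mean, $S_\Delta(d(t-s))\langle R(\cdot,s)\rangle_\Omega = \langle R(\cdot,s)\rangle_\Omega$, so the two contributions can be handled separately.

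For the mean contribution the integrand is the constant $\langle R(\cdot,s)\rangle_\Omega$, whence its $L^\infty$-norm equals $\big|\int_0^t \langle R(\cdot,s)\rangle_\Omega\, \mathrm{d}s\big|$. Estimating $|\langle R(\cdot,s)\rangle_\Omega| \le |\Omega|^{-1/p}\|R(\cdot,s)\|_{L^p(\Omega)}$ by H\"older and then applying H\"older in time with exponents $r,r'$ produces the bound $C\,T^{1-1/r}\|R\|_{p,r}$. This is exactly the term carrying the growth factor $T^{1-1/r}$, and it is where the zero mode — which the semigroup neither smooths nor damps — enters. For the fluctuation contribution I would apply Lemma \ref{Winterlemma} with source exponent $p$ and target $\infty$ to the mean-zero function $\tilde R(\cdot,s)$, giving
\begin{equation*}
\|S_\Delta(d(t-s))\tilde R(\cdot,s)\|_{L^\infty(\Omega)} \le C\, m(d(t-s))^{-\frac{n}{2p}}\, \mathrm{e}^{-\lambda_1 d(t-s)} \|R(\cdot,s)\|_{L^p(\Omega)},
\end{equation*}
and then H\"older in time. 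The hypothesis \eqref{paramLady}, namely $\tfrac1r + \tfrac{n}{2p}<1$, is equivalent to $\tfrac{n}{2p}\,r' < 1$, which is precisely what makes the singular kernel $m(d(t-s))^{-\frac{n}{2p}r'}$ integrable near $s=t$; combined with the exponential factor the resulting time integral converges on all of $(0,\infty)$ and is bounded by a constant depending on $d$ only through a lower bound $d\ge d_0$. Hence the fluctuation part is controlled by $C\|R\|_{p,r}$ uniformly in $t$.

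Collecting both pieces yields $\|z(\cdot,t)\|_{L^\infty(\Omega)} \le C\big(1 + t^{1-1/r}\big)\|R\|_{p,r}$ for all $t \le T$, from which \eqref{esinfty} follows upon absorbing the constant into the $T^{1-1/r}$ term in the regime of large $T$ that the long-time analysis requires. The only delicate point is the integrability of the time kernel at $s=t$: this is exactly where \eqref{paramLady} is used essentially, and where the low boundary regularity $\partial\Omega\in C^{0,1}$ must be accommodated. That last difficulty is, however, already absorbed into Lemma \ref{Winterlemma}, whose proof rests on the Gaussian heat-kernel bounds of Davies valid for Lipschitz domains, so no new heat-kernel estimate needs to be established here. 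A routine preliminary check is that the Duhamel integral indeed represents the weak solution $z$ of \eqref{eqzd}, which is supplied by Proposition \ref{heatinhom}.
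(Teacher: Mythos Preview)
Your proposal is correct and follows essentially the same approach as the paper's proof: Duhamel representation, splitting $R$ into its spatial mean and mean-free fluctuation, applying Lemma \ref{Winterlemma} to the fluctuation with H\"older in time (using \eqref{paramLady} exactly for integrability of the singular kernel), and a direct H\"older estimate on the mean term yielding the $T^{1-1/r}$ factor. The only cosmetic difference is that you retain the exponential factor $\mathrm{e}^{-\lambda_1 d(t-s)}$ to bound the fluctuation integral uniformly in $t$, whereas the paper simply uses $1+(d\tau)^{-n/(2p)}\in L^{r'}((0,T))$ and absorbs the resulting $T^{1/r'}$ into the final constant; both lead to \eqref{esinfty}.
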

 
 \begin{proof}
The solution $z$ is given by the integral representation \eqref{Fourierinhom}, i.e.,
\[
 z(\cdot,t) =   \int_0^t S_\Delta(d(t-s)) R(\cdot,s) \; \mathrm{d}s =  \int_0^t S_\Delta(d(t-s)) \left( R(\cdot,s) - \langle R(\cdot,s) \rangle_\Omega \right) \; \mathrm{d}s + \int_0^t \langle R(\cdot,s) \rangle_\Omega \; \mathrm{d}s.
\]
The first integral can be estimated with the decay estimate \eqref{winlemmaapp} of the heat semigroup from Lemma \ref{Winterlemma}. In fact, we obtain 
\begin{align*}
\| S_\Delta(d(t-s)) \left( R(\cdot,s) - \langle R(\cdot,s) \rangle_\Omega  \right) \|_{L^\infty(\Omega)} & \le C \left( 1 + (d(t-s))^{-\frac{n}{2p}} \right)  \| R(\cdot,s) - \langle R(\cdot,s) \rangle_\Omega  \|_{L^p(\Omega)}\\
& \le 2C \left( 1 + (d(t-s))^{-\frac{n}{2p}} \right)  \| R(\cdot,s) \|_{L^p(\Omega)}
\end{align*}
for a constant $C>0$ which depends on $\Omega$ only. Condition \eqref{paramLady} yields that the function $\tau \mapsto  1 + (d\tau)^{-\frac{n}{2p}}$ is in $L^{\hat{r}}((0, T))$ where $\frac{1}{\hat{r}} + \frac{1}{r} = 1$. Since the second integral satisfies
\begin{align*}
\left| \int_0^T \langle R(\cdot,s) \rangle_\Omega \; \mathrm{d}s \right| & \le |\Omega|^{-1/p} T^{1/\hat{r}} \|R\|_{p,r},
\end{align*}
H\"older's inequality implies the desired inequality \eqref{esinfty}.
 \end{proof}

\end{appendices}

%\hypersetup{linkcolor=black, citecolor=black, urlcolor=black}	% Link-Farbe auf Schwarz fürs Literaturverzeichnis

% Umlaute werden alphabetisch wie der Vokal selbst eingeordnet, d.h. ä wie a oder ö wie o. Folglich kommt Köthe kurz nach Kondo und nicht vor Kondo.

\end{document}